\newcommand{\R}{{\mathbb R}}
\newcommand{\N}{{\mathbb N}}
\newcommand{\dyle}{\displaystyle}
\newcommand{\dint}{\dyle\int}
\newtheorem{Theorem}{Theorem}[section]
\numberwithin{Theorem}{section}
\newtheorem{Corollary}[Theorem]{Corollary}
\newtheorem{Proposition}[Theorem]{Proposition}
\newtheorem{Definition}[Theorem]{Definition}
\newtheorem{remark}[Theorem]{Remark}
\newtheorem{theorem}{Theorem}[section]
\numberwithin{equation}{subsection}
\newtheorem{lemma}[theorem]{Lemma}
\newtheorem{definition}[theorem]{Definition}
\title{On singular problems associated with mixed operators under mixed  boundary conditions}
\date{}
\author{ Tuhina Mukherjee$^{1}$\thanks{Corresponding author}, Lovelesh Sharma$^{1}$  \\
     \small $^{1}$ Department of Mathematics, Indian Institute of Technology Jodhpur, Rajasthan 342030, India \\
         }
\newcommand{\Addresses}{{
  \bigskip
  \footnotesize
T.~Mukherjee, \textit{E-mail address:} \texttt{tuhina@iitj.ac.in}\\

 L. Sharma, \textit{E-mail address:} 
\texttt{sharma.94@iitj.ac.in, loveleshkaushik679@gmail.com}

}}
\begin{document}
\maketitle \vspace{-1.8\baselineskip}
\begin{abstract}
In this paper, we study the following singular problem associated with mixed operators (the combination of the classical Laplace operator and the fractional Laplace operator)  under mixed  boundary conditions
\begin{equation*} \label{1}
    \left\{
    \begin{aligned}
        \mathcal{L}u &= g(u), \quad u > 0 \quad \text{in} \quad \Omega, \\
        u &= 0 \quad \text{in} \quad U^c, \\
        \mathcal{N}_s(u) &= 0 \quad \text{in} \quad \mathcal{N}, \\
        \frac{\partial u}{\partial \nu} &= 0 \quad \text{in} \quad \partial \Omega \cap \overline{\mathcal{N}},
    \end{aligned}
    \right.
    \tag{$P_\lambda$}
\end{equation*}
 where  $U= (\Omega \cup {\mathcal{N}} \cup (\partial\Omega\cap\overline{\mathcal{N}}))$, $\Omega \subseteq \mathbb{R}^N$ is a non empty open set, $\mathcal{D}$, $\mathcal{N}$ are open subsets of $\mathbb{R}^N\setminus{\bar{\Omega }}$ such that ${\mathcal{D}} \cup {\mathcal{N}}= \mathbb{R}^N\setminus{\bar{\Omega}}$, $\mathcal{D} \cap {\mathcal{N}}=  \emptyset $ and $\Omega\cup \mathcal{N}$ is a bounded set with smooth boundary, $\lambda >0$ is a  real parameter and
 $\mathcal{L}=  -\Delta+(-\Delta)^{s},~ \text{for}~s \in  (0, 1).$
 Here $g(u)=u^{-q}$ or $g(u)= \lambda u^{-q}+ u^p$ with  $0<q<1<p\leq 2^*-1$.
We study $(P_\lambda)$ to derive the existence of weak solutions along with its $L^\infty$-regularity. Moreover, some Sobolev-type variational inequalities associated with these weak solutions are established. 
 \end{abstract}

 \textbf{Keywords:} Mixed local-nonlocal operators, mixed boundary conditions,  singular and critical nonlinearity, existence and uniqueness results, regularity.
 
 \textbf{Mathematics Subject Classification:} 35A01, 35A15, 35R11, 35J25, 35J20,  47G20.

 
\section{Introduction}
In this paper, we shall study the existence, uniqueness, and regularity of the following singular problem associated with mixed local and nonlocal operator
\begin{equation*}
    \left\{
    \begin{aligned}
        \mathcal{L}u &= g(u), \quad u > 0 \quad \text{in} \quad \Omega, \\
        u &= 0 \quad \text{in} \quad U^c, \\
        \mathcal{N}_s(u) &= 0 \quad \text{in} \quad \mathcal{N}, \\
        \frac{\partial u}{\partial \nu} &= 0 \quad \text{in} \quad \partial \Omega \cap \overline{\mathcal{N}},
    \end{aligned}
    \right.
    \tag{$P_\lambda$}
\end{equation*}
  where $U= (\Omega \cup {\mathcal{N}} \cup (\partial\Omega\cap\overline{\mathcal{N}}))$, $\Omega \subseteq \mathbb{R}^N$, $\mathcal{D}$, $\mathcal{N}$, respectively denote open Dirichlet and Neumann set, are open subsets forming a partition of $\mathbb{R}^N\setminus{\bar{\Omega }}$ 
  	and $\Omega\cup \mathcal{N}$ is a bounded set with smooth boundary, $\lambda >0$ is a  real parameter, and
 \begin{equation}\label{A1}
\mathcal{L}=  -\Delta+(-\Delta)^{s},~ \text{for}~s \in  (0, 1).
 \end{equation}
Here $g(u)=u^{-q}$ or $g(u)= \lambda u^{-q}+ u^p$ with  $0<q<1< p\leq 2^*-1$. The term "mixed" describes an operator which is a combination of both local and nonlocal differential operators. In our case, the operator $\mathcal{L}$ in \eqref{1} is generated by the superposition of the classical Laplace operator $-\Delta$ and the fractional Laplace operator, $(-\Delta)^{s}$ which is for a fixed parameter $s \in (0,1)$  defined by
$${(- \Delta)^{s}u(x)} = C_{n,s}~ P.V. \int_{\mathbb{R}^N} {\dfrac{u(x)-u(y)}{|x-y|^{N+2s}}} ~ dy. $$ 
The term "P.V." stands for Cauchy's principal value, while $C_{N
,s}$ is a normalizing constant whose explicit expression is given by
 $$C_{N,s}= \bigg( \int_{\mathbb{R}^{N}} {\frac{1-cos({\zeta}_{1})}{|\zeta|^{N+2s}}}~d \zeta \bigg)^{-1}.$$ 
In the literature, there are numerous definitions of nonlocal normal derivatives. We consider the one suggested in \cite{MR3651008} for smooth functions $u$ as
\begin{equation}\label{normal}
\mathcal{N}_{s}u(x):=\dint_{\Omega} \dfrac{u(x)-u(y)}{|x-y|^{N+2s}}\,dy, \qquad  x\in \mathbb{R}^N\setminus\bar{\Omega}.
\end{equation}
To gain a better understanding of nonlocal operators and their applications in real-world situations, we recommend reading the Hitchhiker's Guide \cite{MR2944369} and its references. The study of mixed operators of the form $\mathcal{L}$ in the problem \eqref{1} is motivated by several applications, including optimum searching, biomathematics, and animal forging; we refer to \cite{dipierro2022non, MR4249816, MR2924452, MR3590678, MR3771424}. Heat transmission in magnetic polymers is one of the other popular applications; see \cite{blazevski2013local}. In the applied sciences, bi-modal power law distribution systems, as well as models derived from combining two distinct scaled stochastic processes, naturally develop these types of operators. For an in-depth explanation, see \cite{MR4225516, MR1640881} and its references. 
To this aim, we start with a brief background of the problems available in the literature. Over the past few decades, a lot of research has been done on singular elliptic problems.
 Let us consider elliptic problems consisting of singular nonlinearity or blow-up nonlinearity and critical exponent term, especially of the type
 \begin{equation}\label{singular}
\begin{cases}
          -\Delta u =  \lambda u^{-q}+ u^r,\quad u>0\quad
 \text{in }\Omega,\\
u = 0 \quad \text{on}~ \partial\Omega,\end{cases}
\end{equation}
 where $q>0$ and $r\in (1,\frac{N+2}{N-2}]$.
Crandall et al. pioneering research  \cite{RA} established that the unperturbed and local case of \eqref{1} under Dirichlet boundary conditions provided by
\begin{equation*}
\begin{cases}
-\Delta u = \frac{f}{u^{\gamma}}, & u > 0 \quad \text{in } \Omega, \\
\quad ~u = 0 & \text{in } \partial\Omega.
\end{cases}
\end{equation*}
admits a unique solution \( u \in C^2(\Omega) \cap C(\overline{\Omega}) \) for any, \( \gamma > 0 \) along with the fact that the solution behaves like a distance function near the boundary provided \( f \) is H\"{o}lder continuous. Due to the presence of a singular term, the associated energy
functional is no longer $C^1$, therefore, the standard critical point theory fails, which makes such problems very interesting. For $q\in (0,1)$, Haitao et al. in \cite{MR1964476} studied \eqref{singular} with the sub-super solution techniques, whereas authors in \cite{MR2099611} studied the same problem \eqref{singular} via the Nehari manifold technique. Both articles establish the existence of at least two solutions and regularity results. The article \cite{MR2446183} deals with the delicate $q\geq1$ case, where again the existence of at least two weak solutions were established using the advanced theory of nonsmooth analysis. 
Lazer et al. \cite{LA} showed that unique solution found by  \cite{RA} is in $W_0^{1,2}(\Omega)$ if and only if \( 0 < \gamma < 3 \). Furthermore, they demonstrated that the answer is in \(C^1(\overline{\Omega}) \) as long as \(0 < \gamma < 1 \).
 In \cite{Barrios} the authors studied the following singular problem
 \begin{equation}\label{A}
\begin{cases}
(-\Delta)^s u = \lambda \frac{f(x)}{u^{\gamma}} + M u^p, & u > 0 \quad \text{in } \Omega, \\
~~\quad \quad ~u = 0 & \text{in } \mathbb{R}^N \setminus \Omega.
\end{cases}
\end{equation}
where $ N > 2s $, $ M \geq 0 $, $ 0 < s < 1 $, $ \gamma > 0 $, $ \lambda > 0 $, $ 1 < p < 2_s^* - 1 $, and $ f \in L^m(\Omega) $, $ m \geq 1 $ is a nonnegative function. The authors investigated weak solutions using uniform estimates of sequence \(\{u_n\}\), which are regularised problem solutions with the singular term \( u^{-\gamma} \) substituted by \( (u + \frac{1}{n})^{-\gamma} \). They also addressed multiplicity results for $M > 0$ and small \( \lambda \) in the subcritical case. 
For $ M =0$ with the classical Laplace operator, the semilinear elliptic problem in equation \eqref{A} corresponds to the problem studied by Boccardo et al. in \cite{Bo}.

 In recent years, there has been a considerable focus on investigating elliptic problems involving mixed-type operators $\mathcal{L}$, as in \eqref{1}, exhibiting both local and nonlocal behaviour. 
Let us put some light on the literature concerning problems involving the mixed operator $\mathcal{L}$, among the existing long list. Cassani et al. in \cite{Cassani} examined the spectral properties, existence and nonexistence results, as well as regularity for solutions to a one-parameter family of elliptic equations involving mixed local and nonlocal operators. Biagi et al. in \cite{BS} conducted a thorough analysis of a mixed local and nonlocal elliptic problem. Their research established the existence of solutions, explored maximum principles that dictate the behaviour of these solutions, and investigated the interior Sobolev regularity of the solutions. Their findings enhance our understanding of the properties and behaviour of solutions to the following type of problem
\begin{equation}\label{B}
\begin{cases}
\mathcal{L}u = g(u), & u > 0 \quad \text{in } \Omega, \\
u ~~= 0 & \text{in } \mathbb{R}^N \setminus \Omega.
\end{cases}
\end{equation}

They also examined various interesting inequalities related to the mixed operator $\mathcal{L}$ in \cite{MR4391102}. Recently, Bal et al. have studied the following  mixed local-nonlocal elliptic singular problem,
\begin{equation}\label{C}
\begin{cases}
 -\Delta_p u + (-\Delta)^s_p u = \frac{\lambda}{u^{\gamma}} + u^r, & u > 0 \quad \text{in } \Omega, \\
~~\quad \quad ~u = 0, & \text{in } \mathbb{R}^N\setminus \Omega
\end{cases}
\end{equation}
where, $\Omega \subset \mathbb{R}^N$ is a bounded domain with a smooth boundary, $p > 1$, $N > p$, $s \in (0, 1)$, $r \in (p - 1, p^* - 1)$, and $p^*$ is the critical Sobolev exponent. The authors established the existence of two weak solutions to problem \eqref{C} for $0 < \gamma < 1$ and certain values of $\lambda$. Furthermore, they showed that for any $\gamma > 0$ and $p = 2$ and $s \in (0, 1/2)$, there are at least two positive weak solutions to problem \eqref{C} for small values of $\lambda$.
We also recall \cite{arora2023combined, MR4444761}, where the authors explored purely singular problems with mixed operators along with Dirichlet boundary conditions.
 They obtained various results concerning the existence and other properties of solutions. In \cite{PG}, using the variational methods, authors showed that there are at least two solutions to \eqref{C} for $\gamma < 1$ and $p = 2$.  Lamao et al. in \cite{MR4357939} focused on the behaviour and properties of solutions to problem \eqref{B}, specifically analyzing their summability characteristics. Additionally, Arora et al. \cite{arora2021combined} investigated the existence and non-existence results for both singular and non-singular problems of the form \eqref{B}. 
 
 We also refer interested readers to \cite{MR3445279}, which contains a study of the Neumann problem with a mixed operator. Dipierro et al. \cite{MR4438596} were the first to consider mixed operator problems with classical and nonlocal Neumann boundary conditions. Their recent article discusses the spectral properties and $L^\infty$ bounds related to a mixed local and nonlocal problem, with specific applications arising from population dynamics and mathematical biology.
Inspired by the above literature, we in \cite{Mukherjee} were the first to study elliptic problems involving mixed operators under mixed Dirichlet-Neumann boundary conditions with a concave-convex type nonlinearity. We established results on the existence, nonexistence, and multiplicity of positive solutions, along with proving Picone identity and the maximum principle. Moreover, Giacomoni et al. \cite{JG} analyzed an eigenvalue problem involving mixed operators under mixed boundary conditions, providing bifurcation results from both zero and infinity for an asymptotically linear problem.

Notably, no work address mixed operators with mixed Dirichlet-Neumann boundary conditions involving singular nonlinearity. This gap piqued our curiosity about the implications of formulating a PDE that incorporates a mixed operator $\mathcal{L}$ under boundary conditions that combine Dirichlet data in some regions and Neumann (both local and nonlocal) data in others. Our paper is distinguished by its innovative approach to this question, which involves purely singular or singular with subcritical and critical  exponent type nonlinearity (see \eqref{1}) and the combination of mixed operator as well as mixed Dirichlet-Neumann boundary, which is the striking feature of our paper. Moreover, we established a mixed Sobolev inequality and demonstrated its relation to the solution of the mixed purely singular problem (see Theorem \ref{thm2.4}). Additionally, we proved the weak comparison principle along with other complementary properties and regularity results, all of which constitute the novel contributions of this article. Our problem lacks compactness in the sense of Sobolev embedding due to an additional critical exponent term, which makes \eqref{1} difficult as well as interesting. To overcome this, we seek the help of the Brezis-Lieb lemma.  To establish the existence of a second solution in the case of singular critical type nonlinearity, sharp estimates for the minimizers of the best constant will be required, which remains an open question for future research.

\vspace{0.2cm}
\textbf{Organization of the Article:}
The structure of our article is as follows: Section 2 provides an introduction to the functional framework required to address the problem \eqref{1}. It presents the specific notion of the solution that will be employed and introduces relevant auxiliary and preliminary results.
Section 3 focuses on establishing the existence and uniqueness of solutions and $L^{\infty}$ estimates for purely singular problems \eqref{P_q} for $0<q\leq 1$. Authors employ an approximation method by
modifying (or truncating) the singular term. Also, we discussed the complementary properties of the sequence of solutions to \eqref{P_q} and established mixed Sobolev-type inequality. In Section 4, the authors studied perturbed singular problems, using Nehari manifold analysis and variational techniques to establish existence results. At the end of this section, using the Moser iteration method, authors proved the regularity of the solution, i.e., $L^{\infty}(U)$. 
\vspace{0.2cm}

\textbf{Notations}

In this paper, we will adopt the following notations:
\begin{itemize}
    \item For $u: \R^N \to \R$ measurable functions, we denote by $ u^+ = \max\{u, 0\}$ and $ u^- = \max\{-u, 0\}$.
    \item For $k > 1$, we denote by $k' = \frac{k}{k-1}$ the conjugate exponent of $ k$.
    \item The constant \( C \) may vary between lines or within the same line. If constant $ C$ depends on parameters such as $k_1, k_2, \ldots $, it is expressed as $ C = C(k_1, k_2, \ldots)$.
\end{itemize}

\section{Function space and main results}
In this section, we have set our notations and formulated the functional setting for \eqref{1}, which is used throughout the paper. Also, we state the main results after stating some preliminaries.
For every $s\in (0,1)$, we recall the fractional Sobolev spaces
$${H^{s}(\mathbb{R}^N)} =  \Bigg\{ u \in L^{2}(\mathbb{R}^N):~~\frac{|u(x) - u(y)|}{|x - y|^{\frac{N}{2} + s}} \in L^{2}({\mathbb{R}^N}\times {\mathbb{R}^N)} \Bigg\} $$ {which contains $H^1(\mathbb R^N)$.} We assume that $\Omega \cup \mathcal N$ is bounded with smooth boundary. 
The symbol $U$ denotes $(\Omega \cup {\mathcal{N}} \cup (\partial\Omega\cap\overline{\mathcal{N}}))$ to keep things simple.
We define the function space $\mathcal{X}^{1,2}_{\mathcal{D}}(U)$ as 
\[
    \mathcal{X}^{1,2}_{\mathcal{D}}(U)  = \{u\in H^1(\mathbb{R}^N) : ~u|_{U} \in H^1_0(U) ~\text{and}~ u \equiv 0~ a.e. ~\text{in}~ {U^c}\}.
\]
Let us define 
 $$ \eta(u)^2 =  ||\nabla{u}||^2_{L^{2}(\Omega)}+ [u]^2_{s},$$
for $u\in\mathcal{X}^{1,2}_{\mathcal{D}}(U)$, where  $[u]_s$ is the Gagliardo seminorm of $u$ defined by 
 $$[u]^2_{s} = ~ \bigg(\int_{Q} \frac{|u(x)-u(y)|^{2}}{|x-y|^{N+2s}} \, dx dy \bigg)$$ and $Q= \mathbb R^{2N}\setminus (\Omega^c\times \Omega^c)$.
 Moreover, we define the local function space as
$$
\mathcal{X}^{1,2}_{\mathcal{D}, \text{loc}}(U) = \{ u : \R^N \to \R :~ \nabla u \in L^2(K),~ [u]^2_{s,K}< +\infty, ~\forall~ K \Subset \Omega\}
$$
where
$$
[u]^2_{s,K} = \int_{K\times K} \frac{|u(x) - u(y)|^2}{|x - y|^{N + 2s}} \, dx \, dy, ~ \forall~K\Subset \Omega.$$ 
 Here $ K \Subset \Omega$  means $ K $ is compactly contained in $ \Omega $, thus $K\times K$ must be compactly contained within  $Q$.

The following Poincar\'e type inequality can be established following the arguments of [\cite{MR4065090}, Proposition 2.4]   and taking advantage of partial Dirichlet boundary conditions in $U^c$.

\begin{Proposition}\label{Poin} (Poincar\'e type inequality) There exists a constant $C=C(\Omega, N,s)>0$ such that
$$
\dint_{\Omega}| u|^2\,dx\leq C\bigg(\int_{\Omega} |\nabla u|^2\,dx+  \int_{Q} \dfrac{|u(x)-u(y)|^2}{|x-y|^{N+2s}}\,dxdy\bigg),
$$
for every  $u\in\mathcal{X}^{1,2}_{\mathcal{D}}(U)$, i.e. $\|u\|^2_{L^2(\Omega)}\leq C \eta(u)^2$. 
\end{Proposition}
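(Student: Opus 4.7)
My plan is to give a direct Hardy-type argument exploiting the Dirichlet vanishing of $u$ on $\mathcal D$, rather than to proceed by contradiction and compactness. The local gradient term in $\eta(u)^2$ will in fact not be used: once one records that, because $\Omega\cup\mathcal N$ is bounded, the far-field of $\R^N$ must lie in the Dirichlet region $\mathcal D$, the whole estimate can be read off from the Gagliardo seminorm alone.

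First I would trade the seminorm for a weighted $L^2$-term on $\Omega$. Since $u\equiv 0$ on $U^c\supset\mathcal D$, for $(x,y)\in\Omega\times\mathcal D$ one has $|u(x)-u(y)|^2=|u(x)|^2$, and $\Omega\times\mathcal D\subset Q$; keeping only this slice of the double integral,
\[
[u]_s^2 \;\geq\; \int_{\Omega\times\mathcal D} \frac{|u(x)-u(y)|^2}{|x-y|^{N+2s}}\, dx\,dy \;=\; \int_{\Omega}|u(x)|^2 \,\Phi(x)\,dx, \qquad \Phi(x):=\int_{\mathcal D}\frac{dy}{|x-y|^{N+2s}}.
\]
The remaining task is therefore a uniform lower bound $\Phi(x)\geq c_0>0$ on $\Omega$. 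Since $\Omega\cup\mathcal N$ is bounded, I would pick $R_0>0$ with $\Omega\cup\mathcal N\subset B_{R_0}$; because $\mathcal D\cup\mathcal N=\R^N\setminus\bar\Omega$ and $\mathcal D,\mathcal N$ are disjoint, this forces $\R^N\setminus B_{R_0}\subset\mathcal D$. For every $x\in\Omega\subset B_{R_0}$ and every $y$ with $|y|\geq 2R_0$ the triangle inequality yields $|x-y|\leq \tfrac{3}{2}|y|$, so
\[
\Phi(x)\;\geq\;\int_{|y|\geq 2R_0}\frac{dy}{|x-y|^{N+2s}}\;\geq\;\Bigl(\tfrac{2}{3}\Bigr)^{N+2s}\int_{|y|\geq 2R_0}\frac{dy}{|y|^{N+2s}}\;=:\;c_0(\Omega,N,s)>0.
\]
Combining the two displays yields $\|u\|_{L^2(\Omega)}^2 \leq c_0^{-1}[u]_s^2 \leq c_0^{-1}\eta(u)^2$, which is the claimed inequality with $C=c_0^{-1}$.

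The main obstacle, insofar as there is one, is conceptual rather than technical: it amounts to noticing that the far-field of $\R^N$ is automatically absorbed into $\mathcal D$ (because $\mathcal N$ is forced to be bounded by the standing hypothesis on $\Omega\cup\mathcal N$), which is precisely what produces the non-degenerate Hardy-type weight $\Phi$. Should that geometric observation be judged insufficient in a more refined version of the statement, I would fall back on a classical contradiction argument: normalise a putative bad sequence with $\|u_n\|_{L^2(\Omega)}=1$ and $\eta(u_n)\to 0$, extract an $L^2(\Omega)$-limit $v$ via Rellich--Kondrachov on the bounded set $\Omega$, and apply Fatou's lemma to the slice $\Omega\times\mathcal D$ of $[u_n]_s^2$ to deduce $v\equiv 0$ on $\Omega$, contradicting the normalisation. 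The direct route above is nonetheless preferable as it relies only on the already-stated hypotheses and produces a constant depending explicitly on the geometry of $\Omega$ and $\mathcal N$.
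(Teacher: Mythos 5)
Your proof is correct. The key observation---that $u$ vanishes a.e.\ on $\mathcal{D}\subset U^c$ and that $\Omega\times\mathcal{D}\subset Q$, so that slicing $[u]_s^2$ over $\Omega\times\mathcal{D}$ yields the Hardy-type weight $\Phi(x)=\int_{\mathcal{D}}|x-y|^{-N-2s}\,dy$---is sound, and your uniform lower bound on $\Phi$ over $\Omega$ via the far-field inclusion $\{|y|\geq 2R_0\}\subset\mathcal{D}$ (which holds precisely because $\Omega\cup\mathcal{N}$ is bounded, forcing $\mathcal{N}$ to be bounded and hence the complement of a large ball to sit inside $\mathcal{D}$) is complete and quantitative. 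The paper itself gives no proof here; it cites Barrios--Medina, Proposition~2.4 (a purely fractional Poincar\'e inequality under mixed Dirichlet--Neumann data) and asks the reader to adapt it using the vanishing on $U^c$. Your direct slicing argument is a legitimate and self-contained way to do exactly that; notably, it discards the local gradient term in $\eta(u)^2$ altogether and proves the formally stronger estimate $\|u\|_{L^2(\Omega)}^2\leq C[u]_s^2$, from which the stated inequality follows trivially. Two small remarks. First, the constant $c_0$ you produce depends on $R_0$, i.e.\ on the size of $\Omega\cup\mathcal{N}$, so the Poincar\'e constant genuinely depends on the partition $(\mathcal{D},\mathcal{N})$ and not only on $\Omega$; the paper's notation $C(\Omega,N,s)$ is a harmless imprecision, not an error on your side. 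Second, the fallback compactness argument you sketch would also work but yields a non-explicit constant, so the direct route is indeed preferable and matches the intent of the paper's citation.
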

As a consequence of Proposition \ref{Poin}, $\eta(\cdot)$ forms a norm on $\mathcal{X}^{1,2}_{\mathcal{D}}(U)$ and $\mathcal{X}^{1,2}_{\mathcal{D}}(U)$ is a Hilbert space with the  inner product associated with $\eta(\cdot)$ i.e.
$$\langle{ u},{ v}\rangle = \int_{\Omega} \nabla u. \nabla{v} \,dx + \int_{Q} {\dfrac{(u(x)-u(y)) (v(x)-v(y))}{|x-y|^{N+2s}}} ~ dx dy. $$

 \begin{Proposition}\label{P}
 For every $ u,v\in  C^\infty_0(U)$, it holds
\begin{align*}
    \int_{\Omega}v \mathcal{L} u \,dx  
    &= \int_{\Omega} \nabla u \cdot\nabla{v} \,dx +  \int_{Q} {\dfrac{(u(x)-u(y)) (v(x)-v(y))}{|x-y|^{N+2s}}} ~ dx dy-  \int_{\partial \Omega\cap\overline{{\mathcal{N}}}} v {\frac{\partial u}{\partial \nu}}~ d{\sigma}-  \int_{{\mathcal{N}}} v {\mathcal{N}}_s u~ dx.
    \end{align*}
    where $\nu$ denotes the outward normal on $\partial\Omega$.
    \end{Proposition}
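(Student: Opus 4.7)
The strategy is to split $\mathcal{L}u = -\Delta u + (-\Delta)^s u$ and apply two distinct integration-by-parts identities, one classical and one nonlocal, then use the support condition on $v$ to simplify the boundary-like terms.

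First I would unpack what $v\in C_0^\infty(U)$ gives us: since $U^c\supseteq \mathcal{D}$ and $U$ misses the Dirichlet portion of $\partial\Omega$ (more precisely, $\partial\Omega\cap\overline{\mathcal{D}}\setminus\overline{\mathcal{N}}$), $v$ vanishes identically on $\mathcal{D}$ and in a neighbourhood of $\partial\Omega\cap\overline{\mathcal{D}}$. This is the key to discarding the unwanted Dirichlet-side boundary contributions below.

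For the local piece, Green's identity on $\Omega$ gives
\[
\int_\Omega v(-\Delta u)\,dx = \int_\Omega \nabla u\cdot \nabla v\,dx - \int_{\partial\Omega} v\,\frac{\partial u}{\partial \nu}\,d\sigma.
\]
Since $\partial\Omega = (\partial\Omega\cap\overline{\mathcal{N}})\cup(\partial\Omega\cap\overline{\mathcal{D}})$ and $v$ vanishes on the second piece, the surface integral collapses to $\int_{\partial\Omega\cap\overline{\mathcal{N}}} v\,\frac{\partial u}{\partial\nu}\,d\sigma$.

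For the nonlocal piece I would invoke (and, for completeness, reprove) the identity of Dipierro--Ros-Oton--Valdinoci \cite{MR3651008}:
\[
\int_\Omega v(-\Delta)^s u\,dx = \int_Q \frac{(u(x)-u(y))(v(x)-v(y))}{|x-y|^{N+2s}}\,dx\,dy \;-\; \int_{\mathbb{R}^N\setminus\overline{\Omega}} v\,\mathcal{N}_s u\,dx.
\]
The proof starts from the pointwise definition of $(-\Delta)^s$, splits $\int_{\mathbb{R}^N}dy=\int_\Omega dy+\int_{\Omega^c}dy$, symmetrises the $\Omega\times\Omega$ block in $(x,y)$ to produce the quadratic-difference kernel on that block, and then handles the cross block $\Omega\times\Omega^c$ by adding and subtracting $v(y)(u(x)-u(y))$: the symmetric remainder combines with the diagonal block to reconstruct the integral over $Q=\mathbb{R}^{2N}\setminus(\Omega^c\times\Omega^c)$, while the leftover piece, after swapping $x\leftrightarrow y$, is precisely $-\int_{\Omega^c} v\,\mathcal{N}_s u\,dx$. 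Because $u,v\in C_0^\infty(U)$, the kernel is integrable and Fubini is justified without delicate estimates. Using $\mathbb{R}^N\setminus\overline{\Omega}=\mathcal{D}\cup\mathcal{N}$ together with $v\equiv 0$ on $\mathcal{D}$, the nonlocal Neumann term reduces to $\int_{\mathcal{N}} v\,\mathcal{N}_s u\,dx$.

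Adding the two identities yields exactly the claimed formula. The main technical care lies in Step 3 (the symmetrisation bookkeeping and the matching of normalisation constants with the paper's convention for $[u]_s^2$); everything else is a direct consequence of the support of $v$ and Green's theorem, so I do not anticipate any genuine analytical obstacle beyond standard manipulation of principal-value integrals.
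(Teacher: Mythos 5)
Your proposal is correct and follows essentially the same route as the paper: the paper's proof also decomposes $\mathcal{L}$ into local and nonlocal parts, applies classical integration by parts for the former, invokes Lemma 3.3 of Dipierro--Ros-Oton--Valdinoci \cite{MR3651008} for the latter, and uses the vanishing of $u,v$ on $U^c=\mathcal{D}\cup(\partial\Omega\cap\overline{\mathcal{D}})$ to discard the Dirichlet-side boundary contributions. You spell out the symmetrisation bookkeeping for the nonlocal identity in more detail, but there is no methodological difference.
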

    \begin{proof}
        By directly using the integration by parts formula and the fact that $u,v \equiv 0$ a.e. in $\mathcal{D}\cup(\partial\Omega\cap\overline{\mathcal{D}})=U^c$, we can follow Lemma 3.3 of \cite{MR3651008}, to obtain the conclusion.
    \end{proof}
  \begin{Corollary}
Since $ C^\infty_0(U)$ is dense in $\mathcal{X}^{1,2}_{\mathcal{D}}(U)$, so Proposition \ref{P} still holds for functions in $\mathcal{X}^{1,2}_{\mathcal{D}}(U)$.   
\end{Corollary}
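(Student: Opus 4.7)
The plan is a standard density/continuity argument. Given $u,v\in \mathcal{X}^{1,2}_{\mathcal{D}}(U)$, I would pick approximating sequences $\{u_n\},\{v_n\}\subset C^\infty_0(U)$ with $\eta(u_n-u)\to 0$ and $\eta(v_n-v)\to 0$, which is possible precisely by the density statement invoked in the hypothesis. Applying Proposition \ref{P} to each pair $(u_n,v_n)$ yields the identity with every term classically defined, and the goal is to pass to the limit on both sides.

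Next, I would handle the two bulk terms on the right-hand side: these are exactly the components of the inner product $\langle u_n,v_n\rangle$ on $\mathcal{X}^{1,2}_{\mathcal{D}}(U)$. By bilinearity and the Cauchy--Schwarz estimate
$$
|\langle u_n,v_n\rangle-\langle u,v\rangle|\le \eta(u_n-u)\eta(v_n)+\eta(u)\eta(v_n-v),
$$
both $\int_{\Omega}\nabla u_n\cdot\nabla v_n\,dx$ and the Gagliardo double integral over $Q$ converge to their natural limits. Since Proposition \ref{P} equates the RHS with the (LHS) sum of the bulk pairing $\int_\Omega v_n\mathcal{L}u_n\,dx$ and the boundary contributions $\int_{\partial\Omega\cap\overline{\mathcal{N}}}v_n\,\partial u_n/\partial\nu\,d\sigma$ and $\int_{\mathcal{N}}v_n\mathcal{N}_s u_n\,dx$, the sum of these three LHS terms must also converge. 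One then defines (or equivalently, interprets via duality) the corresponding expressions for $u,v\in \mathcal{X}^{1,2}_{\mathcal{D}}(U)$ as the limits thus obtained; this is the standard way to give meaning to $\mathcal{L}u$, $\partial u/\partial\nu$ and $\mathcal{N}_s u$ as elements of suitable dual/trace spaces, once one notes that the bilinear form of $\mathcal{L}$ is continuous on $\mathcal{X}^{1,2}_{\mathcal{D}}(U)\times \mathcal{X}^{1,2}_{\mathcal{D}}(U)$.

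The main technical point, and the place where one must be careful, is that the three individual terms on the left-hand side do not a priori converge term-by-term for merely $\mathcal{X}^{1,2}_{\mathcal{D}}(U)$-regular functions: the classical normal derivative on $\partial\Omega\cap\overline{\mathcal{N}}$ requires $H^1$-trace theory to be interpreted as an element of $H^{-1/2}(\partial\Omega\cap\overline{\mathcal{N}})$, and the nonlocal normal derivative $\mathcal{N}_s u$ on $\mathcal{N}$ should be read in a distributional/$H^s$-trace sense, using that $\mathcal{N}_s$ extends continuously from $C^\infty_0(U)$ to $\mathcal{X}^{1,2}_{\mathcal{D}}(U)$ with values in the dual of an appropriate trace space (a fact which mirrors what is shown in \cite{MR3651008} for the pure fractional case). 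Once each of the three LHS pairings is understood in these dual senses, continuity with respect to the $\eta$-norm gives convergence of each term individually, and the identity of Proposition \ref{P} passes to the limit, completing the extension to all of $\mathcal{X}^{1,2}_{\mathcal{D}}(U)$.
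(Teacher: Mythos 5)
Your proposal is correct and follows the same density-and-continuity idea that the paper invokes (the paper gives no separate proof, just the assertion that density implies the extension). Where you go beyond the paper is in explicitly flagging the genuine subtlety: the right-hand side bilinear form is manifestly continuous in the $\eta$-norm, so passing to the limit there is immediate by Cauchy--Schwarz, but the three left-hand side terms --- $\int_\Omega v\,\mathcal{L}u$, the classical normal derivative on $\partial\Omega\cap\overline{\mathcal{N}}$, and the nonlocal normal derivative on $\mathcal{N}$ --- only make classical sense for smooth $u$, and must each be reinterpreted as duality pairings against suitable trace spaces before the statement even parses for general $u\in\mathcal{X}^{1,2}_{\mathcal{D}}(U)$. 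You are right that once those dual interpretations are fixed (as in \cite{MR3651008} for the pure fractional case), each term is continuous in $\eta$-norm and the limit passes term-by-term. The paper sweeps this under the rug; in practice the authors only ever apply the identity in its weak form (Definition \ref{d1}), where the two boundary terms are set to zero by the boundary conditions, so the issue never bites --- but your more careful reading is the mathematically honest one.
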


\begin{Definition}\label{d1}
We say that $u\in \mathcal{X}^{1,2}_{\mathcal{D}}(U)$ is a weak solution to the problem \eqref{1} if, $u>0$ a.e., in $\Omega$ and  
\begin{equation}\label{dd2.2}
    \int_{\Omega} \nabla u.\nabla \varphi \,dx +\int_{Q} \frac{(u(x)-u(y))(\varphi(x)-\varphi(y))}{|x-y|^{N+2s}} dxdy  = \int_{\Omega} g(u) \varphi ~dx, ~\forall~ \varphi \in C^\infty_0(\Omega). 
\end{equation}
\end{Definition}

For functions in $\mathcal{X}^{1,2}_{\mathcal{D}}(U)$, we now present an embedding result that is a consequence of $\mathcal{X}^{1,2}_{\mathcal{D}}(U)\hookrightarrow H^1(\mathbb R^N)$ and Sobolev embedding of $H^1(\mathbb R^N)$, see \cite{SK}. 

\begin{remark}\label{r2.5}
   Since $U$ is bounded (because $\Omega\cup\mathcal{N}$ is bounded) with smooth boundary, so we have the compact embedding
    $$\mathcal{X}^{1,2}_{\mathcal{D}}(U)\hookrightarrow \hookrightarrow L^r_{loc}(\R^N)$$
   for $r\in [1,2^*)$ and continuous embedding for $r\in[1, 2^*],$ where $2^*= \frac{2N}{N-2}$ with $N>2.$
\end{remark}
\subsection{Preliminaries}
We present an existence and regularity result for an auxiliary problem that is crucial to our purely singular problem addressed in the subsequent section.
\begin{lemma}\label{l3.1}
 Let $f \in L^{\infty}(\Omega) \setminus \{0\}$ be nonnegative. Then the problem
 \begin{equation}\label{l31}
\begin{cases}  
     \mathcal{L}u &= ~f(x), \quad u>0 \quad 
 \text{in }\Omega,\\ 
  u&=~~0~~\text{in} ~~U^c,\\
 \mathcal{N}_s(u)&=~~0 ~~\text{in} ~~{{\mathcal{N}}}, \\
 \frac{\partial u}{\partial \nu}&=~~0 ~~\text{in}~~ \partial \Omega \cap \overline{{\mathcal{N}}},
      \end{cases}
    \end{equation}
 admits a unique solution $u \in \mathcal{X}^{1,2}_{\mathcal{D}}(U)$. Moreover, for some constant $\beta=\beta(\omega)>0,$ 
\begin{equation}\label{strict-pos}
u(x) \geq \beta >0,~\text {for a.e. }~ x \in  \omega \Subset \Omega.
\end{equation}
    \end{lemma}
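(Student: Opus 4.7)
The plan is to handle existence/uniqueness by a direct Hilbert-space argument and then obtain the strict lower bound via a weak and strong maximum principle.

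First I would set up the variational formulation. The bilinear form naturally associated with problem \eqref{l31} is
$$a(u,v) = \int_\Omega \nabla u \cdot \nabla v\,dx + \int_Q \frac{(u(x)-u(y))(v(x)-v(y))}{|x-y|^{N+2s}}\,dx\,dy,$$
which is exactly the inner product $\langle u, v\rangle$ on $\mathcal{X}^{1,2}_{\mathcal{D}}(U)$, so it is continuous and coercive with coercivity constant $1$. The right-hand side $L(v) = \int_\Omega f v\,dx$ is a bounded linear functional on $\mathcal{X}^{1,2}_{\mathcal{D}}(U)$, since by Hölder's inequality and the Poincaré inequality (Proposition \ref{Poin}), $|L(v)| \le \|f\|_{L^\infty(\Omega)} |\Omega|^{1/2} \|v\|_{L^2(\Omega)} \le C \|f\|_{L^\infty(\Omega)} \eta(v)$. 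By the Riesz representation theorem (or Lax--Milgram) there is a unique $u \in \mathcal{X}^{1,2}_{\mathcal{D}}(U)$ with $a(u,\varphi) = L(\varphi)$ for all $\varphi \in \mathcal{X}^{1,2}_{\mathcal{D}}(U)$, which by Proposition \ref{P} and its corollary is exactly the notion of weak solution to \eqref{l31}.

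Next I would show $u \ge 0$ by testing with $\varphi = -u^- \in \mathcal{X}^{1,2}_{\mathcal{D}}(U)$. The local piece contributes $\int_\Omega |\nabla u^-|^2\,dx$. For the nonlocal piece, using the decomposition $u = u^+ - u^-$ and the disjoint-support identity $u^+(x)u^-(x)\equiv 0$, a short calculation yields
$$(u(x)-u(y))(u^-(y)-u^-(x)) = (u^-(x)-u^-(y))^2 + u^+(x)u^-(y) + u^+(y)u^-(x) \ge 0,$$
so the left-hand side is bounded below by $\eta(u^-)^2$. Since $f \ge 0$, the right-hand side $-\int_\Omega f u^-\,dx \le 0$, giving $\eta(u^-) = 0$, hence $u \ge 0$ a.e. Because $f \not\equiv 0$, $u$ is nontrivial.

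Finally, for the strict lower bound \eqref{strict-pos}, I would invoke the strong maximum principle for the mixed operator $\mathcal{L}$ established for the present mixed boundary setting in the authors' earlier work (cf.\ \cite{Mukherjee}): a nonnegative, nontrivial supersolution of $\mathcal{L}u \ge 0$ is strictly positive in $\Omega$. Standard interior regularity (first $u \in L^\infty$ by, e.g., a Moser / De Giorgi iteration which the authors also use later, and then $u \in C(\Omega)$ by known regularity results for $\mathcal{L}$ with bounded right-hand side) then makes $u$ continuous and strictly positive in $\Omega$; on any $\omega \Subset \Omega$, $u$ attains a positive minimum $\beta = \beta(\omega) > 0$, giving \eqref{strict-pos}.

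The main obstacle is the last step: establishing the strong maximum principle and the continuity needed to pass from ``$u>0$ a.e.'' to ``$u\ge \beta>0$ on compacts''. Coercivity and Lax--Milgram are essentially automatic from the Hilbert-space structure already set up; the sign-test gives nonnegativity cheaply. The quantitative lower bound, however, needs either a mixed-operator Harnack/strong maximum principle or enough regularity to apply a classical comparison with a positive subsolution (e.g.\ the first Dirichlet eigenfunction of $\mathcal{L}$ on a slightly larger subdomain), and care is required because the Neumann/nonlocal Neumann boundary conditions prevent us from quoting purely Dirichlet results verbatim.
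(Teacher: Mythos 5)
Your proposal is correct and reaches the same conclusions, but it takes a genuinely different route on two of the three points.

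For existence and uniqueness, the paper minimizes the natural energy functional $\mathcal{J}(u) = \tfrac12\eta(u)^2 - \int_\Omega f u\,dx$, verifying coercivity and weak lower semicontinuity, and then proves uniqueness separately by subtracting two solutions. You instead observe that the bilinear form is exactly the Hilbert-space inner product on $\mathcal{X}^{1,2}_{\mathcal{D}}(U)$ and apply Riesz representation / Lax--Milgram directly. For a linear, symmetric, coercive problem these are equivalent, and your route is the slicker one since existence and uniqueness come out simultaneously and coercivity of the form is trivially equal to $1$. Your sign argument for $u\ge 0$ (testing with $-u^-$) is the same as the paper's (testing with $u^-$), just with signs flipped; your algebraic identity is correct.

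For the strict lower bound \eqref{strict-pos}, the paper takes a shorter path than the one you outline: it directly invokes the weak Harnack inequality of Garain--Kinnunen (cited in the paper as \cite{Kinnunen}, Theorem 8.3), which is an interior estimate valid for nonnegative, nontrivial weak supersolutions of $\mathcal{L}u\ge 0$ and gives the bound $\operatorname{ess\,inf}_\omega u \ge \beta(\omega)>0$ on compacta directly from the weak formulation, with no need to first upgrade to $L^\infty$ and then to continuity and then apply a strong maximum principle. Since the estimate is local (applied to $\omega\Subset\Omega$), the mixed Dirichlet--Neumann boundary conditions that you flag as a concern never actually enter; this is one advantage of quoting the Harnack-type result rather than routing through a comparison argument or a global strong maximum principle. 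Your proposed alternative is viable in principle but is more work than the paper's one-line citation, and as you yourself note it is the part of your proof that would require the most care to make rigorous.
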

\begin{proof}
In order to prove the existence of a solution to \eqref{l31}, we define the energy functional $\mathcal{J}: \mathcal{X}^{1,2}_{\mathcal{D}}(U)\to \mathbb{R}$ by
\begin{equation}
\mathcal{J}(u) := \frac{1}{2} \int_{\Omega} |\nabla u|^2 \, dx + \frac{1}{2} \int_{Q}  \frac{|u(x) - u(y)|^2}{|x - y|^{N + 2s}} \, dx \, dy - \int_{\Omega} f(x) u \, dx, ~~ \forall~u\in\mathcal{X}^{1,2}_{\mathcal{D}}(U),
\end{equation}
and then we look for a solution of \eqref{l31} as a critical point to function $\mathcal{J}.$ The following properties are easy to verify:
\begin{enumerate}
    \item  $\mathcal{J}$ is coercive. Since, by Remark \ref{r2.5} it follows
\begin{equation*}
\mathcal{J}(u) \geq\frac{1}{2}\eta(u)^2 - a |\Omega|^{\frac{1}{2}}  \| f \|_{L^{\infty}(\Omega)} \eta(u),
\end{equation*}
where $a> 0$ is the embedding constant. Then $\mathcal{J}(u)\to +\infty$ as $\eta(u)\to+\infty$. 
\item $\mathcal{J}$ is weakly  lower semi-continuous in $\mathcal{X}^{1,2}_{\mathcal{D}}(U),$ since functional $\mathcal{J}$ is convex and $C^1$ functional.
\end{enumerate}
Thus, we conclude that $\mathcal{J}$ has a minimizer on $\mathcal{X}^{1,2}_{\mathcal{D}}(U)$, which solves \eqref{l31}.
Now, we claim that the solution of \eqref{l31} obtained as minimizer is unique. Suppose $u_1, u_2 \in \mathcal{X}^{1,2}_{\mathcal{D}}(U)$ are  two distinct solutions to \eqref{l31} and subtracting them then for every $\varphi \in \mathcal{X}^{1,2}_{\mathcal{D}}(U)$, we have,
\begin{equation}\label{e305}
\int_{\Omega} \big( \nabla u_1 -  \nabla u_2 \big)\cdot \nabla \varphi \, dx 
+ \int_{Q} {\dfrac{\big((u_1 - u_2)(x) - (u_1 - u_2)(y)\big) (\varphi(x)-\varphi(y))}{|x-y|^{N+2s}}} ~ dx dy  \,  = 0.
\end{equation}
Putting $\varphi= u_1 - u_2$ in \eqref{e305}, we get, $\eta(u_1-u_2)=0$
which clearly says that $u_1-u_2=0$ a.e. in $\R^N$.
Hence, the solution of \eqref{l31} is unique.
Now we show that the solution of \eqref{l31} is nonnegative.  Taking $u^-=\max\{-u,0\}$ as a test function in \eqref{l31}, it follows that 
\begin{equation}\label{eq2.18}
         0 \leq\int_{\Omega} f u^- \, dx= \int_{\Omega} \nabla u \cdot\nabla{u^-} \,dx +  \int_{Q} {\dfrac{(u(x)-u(y)) (u^-(x)-u^-(y))}{|x-y|^{N+2s}}} ~ dx dy.
        \end{equation}
It is easy to see that
$$
\begin{aligned}
    (u(x)-u(y)) (u^-(x)-u^-(y))
    &= \left[\left(u^+(x)-u^+(y)\right)-\left(u^-(x)-u^-(y)\right)\right]\left(u^-(x)-u^-(y)\right)\\
    &= -u^+(x)u^-(y)-u^+(y)u^-(x)-\left|\left(u^-(x)-u^-(y)\right)\right|^2\leq -\left|\left(u^-(x)-u^-(y)\right)\right|^2
    \end{aligned}
    $$
and $\nabla u \cdot\nabla{u^-} = -|\nabla u^-|^2$ then from \eqref{eq2.18} we deduce that
{$$0 \leq\int_{\Omega} fu^- \, dx\leq  \int_{\Omega} -|\nabla{u^-}|^2 \,dx -  \int_{Q} {\dfrac{ \left|\left(u^-(x)-u^-(y)\right)\right|^2}{|x-y|^{N+2s}}} ~ dx dy\leq 0.$$}
Thus,
$0\leq -\eta(u^-)^2\leq 0$ which says $u^-=0$ a.e. in $\R^N.$
Hence, $u\geq 0$ a.e. in $\R^N$. 
Concerning the second assertion, as $f\not\equiv 0$, we have $u\not\equiv 0$ in $U.$ Thus, utilizing [\cite{Kinnunen}, Theorem 8.3], we obtain for some constant $\beta=\beta(\omega)>0,$ 
\begin{equation}
u(x) \geq \beta >0,~\text {for a.e. }~ x \in  \omega \Subset \Omega
\end{equation}
which finishes the proof.
\end{proof}

\begin{lemma}\label{linfty}
Let $f\in L^{\infty}(U)$  and  $u\in \mathcal{X}^{1,2}_{\mathcal{D}}(U) $ is a weak solution of \eqref{l31} then $u\in L^{\infty}(U)$. 
\end{lemma}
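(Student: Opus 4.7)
The plan is to prove $L^\infty$-regularity by a Moser iteration adapted to the mixed local-nonlocal setting. Since Lemma \ref{l3.1} already gives $u \geq 0$ a.e., I will work with $u \geq 0$. For each $k > 0$ I set $u_k := \min\{u,k\}$, so that $u_k \in \mathcal{X}^{1,2}_{\mathcal{D}}(U) \cap L^\infty(\R^N)$; then for every $\beta \geq 0$ the truncated power $\varphi := u_k^{2\beta+1}$ is an admissible test function in the weak formulation of \eqref{l31} (which extends by density to all of $\mathcal{X}^{1,2}_{\mathcal{D}}(U)$).

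Testing with $\varphi = u_k^{2\beta+1}$, the local term is processed by the chain rule to give $\frac{2\beta+1}{(\beta+1)^2}\int_\Omega |\nabla u_k^{\beta+1}|^2\,dx$. The nonlocal term is handled using the pointwise inequality
\begin{equation*}
(a-b)(a^{2\beta+1}-b^{2\beta+1}) \geq \tfrac{2\beta+1}{(\beta+1)^2}\bigl(a^{\beta+1}-b^{\beta+1}\bigr)^2, \qquad a,b \geq 0,
\end{equation*}
combined with a short case analysis on whether $u(x), u(y)$ lie above or below $k$ (using monotonicity of the truncation and the sign of $u(x)-u(y)$) that shows
\begin{equation*}
(u(x)-u(y))\bigl(u_k^{2\beta+1}(x)-u_k^{2\beta+1}(y)\bigr) \geq \tfrac{2\beta+1}{(\beta+1)^2}\bigl(u_k^{\beta+1}(x)-u_k^{\beta+1}(y)\bigr)^2.
\end{equation*}
Summing the two contributions and estimating the right-hand side of the weak equation crudely by $\|f\|_{L^\infty(U)}\|u\|_{L^{2\beta+1}(\Omega)}^{2\beta+1}$ leads to $\eta(u_k^{\beta+1})^2 \leq \frac{(\beta+1)^2}{2\beta+1}\|f\|_{L^\infty(U)}\|u\|_{L^{2\beta+1}(\Omega)}^{2\beta+1}$.

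Applying the Sobolev embedding $\mathcal{X}^{1,2}_{\mathcal{D}}(U) \hookrightarrow L^{2^*}(\R^N)$ from Remark \ref{r2.5} and letting $k \to \infty$ by Fatou's lemma, I would arrive at the key iterative inequality
\begin{equation*}
\|u\|_{L^{(\beta+1)2^*}(U)}^{2(\beta+1)} \leq C(\beta+1)\,\|f\|_{L^\infty(U)}\,\|u\|_{L^{2\beta+1}(U)}^{2\beta+1}.
\end{equation*}
Starting from $u \in L^{2^*}(U)$ and setting $\alpha_0 := 2^*$, $\alpha_{n+1} := \tfrac{\alpha_n+1}{2}\cdot 2^*$, the recursion is used iteratively to control $\|u\|_{L^{\alpha_n}(U)}$. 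Since $\alpha_n \to \infty$ geometrically (because $\alpha_{n+1}/\alpha_n \to N/(N-2) > 1$), the usual Moser-iteration bookkeeping shows that $\|u\|_{L^{\alpha_n}(U)}$ stays bounded uniformly in $n$, hence $u \in L^\infty(U)$.

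The delicate step is the nonlocal estimate: one must verify that the ``asymmetric'' product mixing the untruncated factor $u(x)-u(y)$ with the truncated factor $u_k^{2\beta+1}(x)-u_k^{2\beta+1}(y)$ pointwise dominates the corresponding fully-truncated product, so that the convexity inequality applies and yields the Gagliardo seminorm of $u_k^{\beta+1}$. Everything else is routine Moser iteration; the only secondary point is ensuring that the accumulated constants do not blow up, which follows from the geometric growth of the exponents.
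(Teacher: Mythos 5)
Your Moser-iteration approach is correct in outline but takes a genuinely different route from the paper. The paper uses a De Giorgi/Stampacchia level-set truncation: it tests with $\varphi_k = (\operatorname{sgn}u)\max(|u|-k,0)$, \emph{drops} the nonnegative nonlocal contribution entirely, and derives a superlinear decay $|\mathcal{A}(h)| \leq C\|f\|_{L^\infty}^r(h-k)^{-r}|\mathcal{A}(k)|^{r-1}$ for the level sets $\mathcal{A}(k)=\{|u|\geq k\}\subset\Omega$ with $2<r\leq 2^*$; Stampacchia's lemma then yields $u\in L^\infty(\Omega)$, and the extension to $L^\infty(U)$ is handled \emph{separately} by observing that the nonlocal Neumann condition $\mathcal{N}_s u=0$ expresses $u(x)$, for $x\in\mathcal{N}$, as a weighted average of $u$ over $\Omega$. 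You instead keep both local and nonlocal pieces, test with $u_k^{2\beta+1}$, and your case analysis for the asymmetric truncated product is sound (when both points lie below $k$ you are in the pure algebraic-inequality setting; when both lie above $k$ both sides vanish; in the mixed case monotonicity replaces the factor $u(x)-u(y)$ by the larger $u(x)-u(y)$ while $u_k^{2\beta+1}$ is unchanged, so the lower bound survives). One advantage of your route is that, going through the full $\eta$-Sobolev inequality $\|v\|_{L^{2^*}(U)}\leq C\eta(v)$ for $v\in\mathcal{X}^{1,2}_{\mathcal{D}}(U)$ (which the paper itself uses in Lemma 4.1), you control $\|u\|_{L^{\alpha_n}(U)}$ directly over all of $U$, so the separate Neumann-extension step becomes unnecessary; the paper's route, using only the local $H^1_0(\Omega)$-Sobolev inequality, is shorter per step but must argue about $\mathcal{N}$ afterwards. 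The one place where your write-up is thin is the final bookkeeping: because the left exponent $2(\beta+1)$ and right exponent $2\beta+1$ differ by one, the recursion reads $A_{n+1}\leq \frac{\log(C(\beta_n+1)\|f\|_\infty)}{2(\beta_n+1)}+\bigl(1-\tfrac{1}{2(\beta_n+1)}\bigr)A_n$ with $A_n=\log\|u\|_{L^{\alpha_n}(U)}$, and to conclude $\sup_n A_n<\infty$ you should note that the additive error is $O(\log\beta_n/\beta_n)$, summable by the geometric growth of $\beta_n$, and that the factor $1-\frac{1}{2(\beta_n+1)}<1$ can only help when $A_n\geq 0$. Writing this out would complete the proof; as stated, "the usual Moser-iteration bookkeeping" is asserted rather than verified.
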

\begin{proof}
     We define $\mathcal{A}(k)=\{x \in \Omega: ~|u(x)| \geq k\}$, for any $k>0$.
  Choosing
 $$
\varphi_k(x) = (\operatorname{sgn} u) \max(|u| - k, 0) =
\begin{cases} 
   u - k, & \text{if } u \geq k, \\[2mm]
   0, & \text{if } |u| \leq k, \\[2mm]
   u + k, & \text{if } u \leq -k.
\end{cases}
$$
  as a test function in \eqref{l31}, we have
\begin{equation}\label{bieq}
\int_{\Omega} \nabla u \cdot \nabla \varphi_k \,d x+\int_{Q}  \frac{(u(x)-u(y))\left(\varphi_k(x)-\varphi_k(y)\right)}{|x-y|^{n+2 s}} d x d y= \int_{\Omega} f \varphi_k \,d x.
    \end{equation}
Hence, by non-negativity of the second integral in \eqref{bieq} 
 and  using the Sobolev embedding and H\'older inequality, we obtain
$$
\begin{aligned}
\int_{\Omega}\left|\nabla \varphi_k\right|^2 d x=\int_{\Omega} \nabla u \cdot \nabla \varphi_k d x & \leq \int_{\Omega} f \varphi_k d x \leq \int_{\mathcal{A}(k)} |f\varphi_k| \,d x  \leq C_0\|f\|_{L^{\infty}(\Omega)}|\mathcal{A}(k)|^{\frac{r-1}{r}}\left(\int_{\Omega}\left|\nabla \varphi_k\right|^2 d x\right)^{\frac{1}{2}},
\end{aligned}
$$
where $C_0$ is the Sobolev constant and $2<r\leq 2^*$. Hence, we have
\begin{equation}\label{515eq}
    \int_{\Omega}\left|\nabla \varphi_k\right|^2 d x \leq C_1\|f\|_{L^{\infty}(\Omega)}^2|A(k)|^{\frac{2(r-1)}{r}}.
\end{equation}
It is easy to verify that if we choose $h$ such that, $0<k<h$ then $|u(x)|-k\geq (h-k)$ on $A(h)$ and $\mathcal{A}(h) \subset \mathcal{A}(k)$. Using this fact and \eqref{515eq}, we find
$$
\begin{aligned}
(h-k)^2|\mathcal{A}(h)|^{\frac{2}{r}} &\leq\left(\int_{\mathcal{A}(h)}(|u(x)|-k)^r d x\right)^{\frac{2}{r}}  \leq\left(\int_{\mathcal{A}(k)}(|u(x)|-k)^r d x\right)^{\frac{2}{r}} \\
& \leq C_2\int_{\Omega}\left|\nabla \varphi_k\right|^2 d x 
\leq C\|f\|_{L^{\infty}(\Omega)}^2|\mathcal{A}(k)|^{\frac{2(r-1)}{r}}, \text{where}~ C_1C_2= C.
\end{aligned}
$$
Therefore, we have $|\mathcal{A}(h)| \leq C \frac{\|f\|_{L^{\infty}(\Omega)}^r}{(h-k)^r}|\mathcal{A}(k)|^{r-1}$, $\forall$ $h>k>0$. Thus using [\cite{MR3393266}, Lemma 14] or [\cite{Stampacchia}, Lemma B.1], we obtain $|\mathcal{A}(d)|=0$, where $d^r=c \|f\|_{L^{\infty}(\Omega)}^r 2^{\frac{r^2}{r-1}}$, and $0<c=c(r,\Omega, C_1,C_2)$. 
Hence, \begin{equation}\label{stp1}
\|u\|_{L^{\infty}(\Omega)} \leq C', \
    \end{equation}
    for some positive constant $C'=C'(C, \|f\|_{L^{\infty}(\Omega)})$. Thus, $u\in L^{\infty}(\Omega).$ {  Now,  if $x\in \mathcal{N}$, then using the definition of $\mathcal{N}_s$ (see  \eqref{normal}) and $\mathcal{N}_s u(x)=0$, we have
$$ u(x)\int_{\Omega} \frac{dy}{|x-y|^{n+2s}}= \int_{\Omega} \frac{u(y) dy}{|x-y|^{n+2s}}\implies
u(x)=\frac{\int_{\Omega} \frac{u(y) dy}{|x-y|^{n+2s}}}{\int_{\Omega} \frac{dy}{|x-y|^{n+2s}}}.$$
Since $u\in L^{\infty}(\Omega)$ then we get $|u(x)|\leq \|u\|_{L^{\infty}(\Omega)}$, for each $x\in\mathcal{N}.$ Thus, we conclude that $u\in L^{\infty}(U).$
}
\end{proof}
\subsection{Main results}

The theorems stated below are the main results of this article.
\begin{theorem}\label{th2}
    There exists a {unique} solution of $(P_\lambda)$ when $g(u)=u^{-q}, ~q>0.$
\end{theorem}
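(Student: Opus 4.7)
The plan is to construct a weak solution by an approximation scheme that removes the singularity at zero, and to obtain uniqueness from the monotonicity of $t\mapsto t^{-q}$. For each $n\in\mathbb N$, I would consider the truncated problem
$$\mathcal L u_n=(u_n+1/n)^{-q}\inn\Omega,$$
subject to the same mixed boundary conditions as in $(P_\lambda)$. Since the right-hand side is bounded above by $n^{q}$ and Lipschitz in $u_n$, a solution $u_n\in\mathcal X^{1,2}_{\mathcal D}(U)$ can be obtained via a Schauder fixed-point argument applied to the map $T:L^2(\Omega)\to L^2(\Omega)$ with $T(w)=u$, where $u$ solves $\mathcal L u=(w^++1/n)^{-q}$ with the prescribed boundary data. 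Well-posedness of $T$ comes from Lemma \ref{l3.1}, while compactness follows from Lemma \ref{linfty} together with the compact embedding in Remark \ref{r2.5}. A weak comparison argument (testing the difference of the equations for $u_n$ and $u_{n+1}$ with $(u_n-u_{n+1})^+$ and using that $(s+1/n)^{-q}$ is non-increasing in both $s$ and $1/n$) gives $u_n\le u_{n+1}$; applying \eqref{strict-pos} to $u_1$ then yields the uniform interior lower bound $u_n\ge u_1\ge\beta_\omega>0$ on every $\omega\Subset\Omega$.

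Next I would establish a uniform $\eta$-bound. Choosing $u_n$ itself as test function,
$$\eta(u_n)^2=\int_\Omega(u_n+1/n)^{-q}u_n\,dx\le\int_\Omega u_n^{1-q}\,dx.$$
For $0<q\le 1$ the right-hand side is controlled by H\"older's inequality and Proposition \ref{Poin} by $C\,\eta(u_n)^{1-q}$ (with the $q=1$ case being immediate since the integrand is bounded by $1$), so $\sup_n\eta(u_n)<\infty$. Passing to a subsequence $u_n\weakly u$ in $\mathcal X^{1,2}_{\mathcal D}(U)$ with almost-everywhere convergence, the pointwise monotone limit inherits $u\ge\beta_\omega$ on every $\omega\Subset\Omega$. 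For a test function $\varphi\in C^\infty_0(\Omega)$ with $\operatorname{supp}\varphi\Subset\omega$, the bound $(u_n+1/n)^{-q}\le\beta_\omega^{-q}$ on $\omega$ allows the dominated convergence theorem to identify the limit of the right-hand side as $\int_\Omega u^{-q}\varphi\,dx$; the left-hand side passes by weak convergence. Hence $u$ is a weak solution of $(P_\lambda)$ in the sense of Definition \ref{d1}.

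Finally, for uniqueness, I would use the strict monotonicity of $t\mapsto -t^{-q}$. Given two weak solutions $u_1,u_2$, testing the difference of the two identities with $(u_1-u_2)^+$ (admissible by extending the test class from $C_0^\infty(\Omega)$ as in the Corollary following Proposition \ref{P}) leads to
$$\eta\big((u_1-u_2)^+\big)^2=\int_\Omega\big(u_1^{-q}-u_2^{-q}\big)(u_1-u_2)^+\,dx\le 0,$$
so $(u_1-u_2)^+\equiv 0$; by symmetry $u_1=u_2$. The most delicate step of the whole program is the integrability involved in this uniqueness test: one must ensure that $u_i^{-q}(u_1-u_2)^+$ is in $L^1(\Omega)$ despite the possible degeneracy of the $u_i$ near $\partial\Omega\cap\overline{\mathcal D}$. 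I would handle this by approximating $(u_1-u_2)^+$ with truncations compactly supported where both solutions are strictly positive via \eqref{strict-pos}, and then letting the truncation exhaust $\Omega$ using a Hardy--Poincar\'e-type control of the boundary behaviour.
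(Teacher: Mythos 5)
Your overall architecture matches the paper's: truncate the singularity, prove existence for the truncated problems by Schauder's fixed point, use comparison to get a monotone increasing sequence with uniform interior lower bound, pass to the limit, and prove uniqueness by testing the difference with its negative (or positive) part. For $0<q\le 1$, this is essentially the paper's proof (Lemmas \ref{lem4.1}, \ref{u_n incre}, \ref{lem4.3}, Theorem \ref{existence}, Proposition \ref{prop3.1} and the uniqueness lemma).

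There is, however, a genuine gap: Theorem \ref{th2} is stated for \emph{all} $q>0$, but your a priori estimate only works for $q\le 1$. Testing with $u_n$ gives
$\eta(u_n)^2 \le \int_\Omega u_n^{1-q}\,dx$,
and when $q>1$ the exponent $1-q$ is negative, so this integral is itself a singular quantity near the Dirichlet part of the boundary and cannot be controlled by $\eta(u_n)^{1-q}$ via H\"older and Sobolev; the manipulation with $m$ and $m'$ you rely on requires $0<1-q<2$. In fact, for $q>1$ the limit $u$ is generally \emph{not} in $\mathcal X^{1,2}_{\mathcal D}(U)$, only in $\mathcal X^{1,2}_{\mathcal D,\mathrm{loc}}(U)$. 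The paper handles this by choosing $u_n^{q}$ as test function, deducing via the algebraic inequality of Lemma \ref{algineq} that $\{u_n^{(q+1)/2}\}$ is bounded in $\mathcal X^{1,2}_{\mathcal D}(U)$ and that $\{u_n\}$ is bounded in $\mathcal X^{1,2}_{\mathcal D,\mathrm{loc}}(U)$ (Lemma \ref{lem3.5}), and then passes to the limit in the nonlocal term with a Vitali-type argument on compact subsets of $Q$ (Theorem \ref{thm3.8}). You need a version of this to cover $q>1$; as written your scheme stops short.

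A secondary, smaller point: for uniqueness you cite the Corollary after Proposition \ref{P} to admit $\varphi=(u_1-u_2)^\pm$ as a test function, but that corollary only concerns the linear bilinear form. The correct tool is Proposition \ref{prop3.1}, which extends the singular weak formulation to test functions in $\mathcal X^{1,2}_{\mathcal D}(U)$ by a density/lower-semicontinuity argument; your truncation-by-compact-support workaround is in the same spirit and would work, but the reference is off. Also note that Proposition \ref{prop3.1} itself assumes $u\in\mathcal X^{1,2}_{\mathcal D}(U)$, so the uniqueness statement for $q>1$ implicitly restricts to solutions in that space rather than in $\mathcal X^{1,2}_{\mathcal D,\mathrm{loc}}(U)$ -- a subtlety your proposal does not address either.
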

\begin{theorem}\label{thm2.4}
 Assume $ 0 < q < 1$. The mixed Sobolev inequality 
 \begin{equation}\label{eqmixed}
    \eta(v)^2\geq C\left(\int_{\Omega}|v|^{1-q}\,dx\right)^{\frac{2}{1-q}}
 \end{equation}
 is satisfied for every $ v \in \mathcal{X}^{1,2}_{\mathcal{D}}(U) $ if and only if the problem \eqref{P_q} admits a weak solution in  $\mathcal{X}^{1,2}_{\mathcal{D}}(U)$.

\end{theorem}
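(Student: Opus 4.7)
The plan is to establish the two implications separately: the direction Sobolev $\Rightarrow$ existence will be a standard direct-method minimization whose coercivity is supplied by \eqref{eqmixed}, while existence $\Rightarrow$ Sobolev will be a concavity-based comparison with the given solution $u$, followed by a scaling optimization.

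For the sufficiency direction, I would consider the functional
$$
I(v)=\tfrac{1}{2}\eta(v)^2-\tfrac{1}{1-q}\int_\Omega (v^+)^{1-q}\,dx
$$
on $\mathcal{X}^{1,2}_{\mathcal{D}}(U)$. The inequality \eqref{eqmixed} yields $I(v)\ge \tfrac{1}{2}\eta(v)^2-C_1\eta(v)^{1-q}$, which is coercive since $1-q<2$. Weak lower semicontinuity follows from convexity of the $\eta^2$-term together with the compact embedding of Remark \ref{r2.5} applied to $r=1-q<2^*$. The direct method then produces a minimizer $u$; evaluating $I$ at $t\varphi$ for a fixed admissible $\varphi\not\equiv 0$ and letting $t\to 0^+$ shows $\inf I<0$, so $u\not\equiv 0$. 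Since $\eta(u^+)\le \eta(u)$ (using $1$-Lipschitzness of $t\mapsto t^+$, which controls the nonlocal Gagliardo term as well as $|\nabla u^+|\le|\nabla u|$), replacing $u$ by $u^+$ does not increase $I$, so we may assume $u\ge 0$. The Euler--Lagrange equation for $I$ is then the equation in \eqref{P_q}, and strict positivity $u>0$ in any $\omega\Subset\Omega$ follows from the strong maximum principle invoked in Lemma \ref{l3.1}.

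For the necessity direction, let $u\in\mathcal{X}^{1,2}_{\mathcal{D}}(U)$ solve \eqref{P_q} and fix an arbitrary $v\in\mathcal{X}^{1,2}_{\mathcal{D}}(U)$; without loss of generality $v\ge 0$, since $\eta(|v|)\le\eta(v)$. The concavity of $t\mapsto t^{1-q}$ on $(0,\infty)$ gives the pointwise estimate
$$
v^{1-q}\le q\,u^{1-q}+(1-q)\,u^{-q}v\qquad\text{in }\Omega.
$$
Integrating over $\Omega$ and using the weak formulation of \eqref{P_q} tested with $u$ and with $v$ (after extending Definition \ref{d1} to $\mathcal{X}^{1,2}_{\mathcal{D}}(U)$ by density, which provides $\int_\Omega u^{1-q}\,dx=\eta(u)^2$ and $\int_\Omega u^{-q}v\,dx=\langle u,v\rangle\le \eta(u)\eta(v)$ by Cauchy--Schwarz on the inner product introduced after Proposition \ref{Poin}), I obtain
$$
\int_\Omega v^{1-q}\,dx\le q\,\eta(u)^2+(1-q)\,\eta(u)\eta(v).
$$
Applying this estimate to $tv$ for $t>0$ and optimizing (optimal $t=\eta(u)/\eta(v)$) produces the scale-invariant inequality $\int_\Omega v^{1-q}\,dx\le \eta(u)^{1+q}\eta(v)^{1-q}$, which upon raising to the power $2/(1-q)$ is exactly \eqref{eqmixed} with $C=\eta(u)^{-2(1+q)/(1-q)}$.

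The main obstacle will be the rigorous justification of using $u$ and a general $v\in\mathcal{X}^{1,2}_{\mathcal{D}}(U)$ as test functions in the weak formulation of \eqref{P_q}, since Definition \ref{d1} only guarantees test functions in $C_0^\infty(\Omega)$ and the integrand $u^{-q}v$ is delicate near $\{u=0\}\subset\partial\Omega$. I would handle this by a truncation/approximation scheme: replace $v$ by $v_k=\min\{v,k\}\chi_{\omega_k}$ with $\omega_k\Subset\Omega$ exhausting $\Omega$, exploit the uniform lower bound \eqref{strict-pos} of Lemma \ref{l3.1} on each $\omega_k$ to ensure $u^{-q}v_k\in L^1$, and pass to the limit by monotone convergence, using the boundary regularity of $u$ established in Theorem \ref{th2}.
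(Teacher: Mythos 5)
Your necessity direction and the paper's are, at bottom, the same argument dressed in two outfits. Both begin from the two identities supplied by Proposition \ref{prop3.1}: testing \eqref{P_q} with $u$ gives $\eta(u)^2=\int_\Omega u^{1-q}\,dx$, and testing with $|v|$ gives $\int_\Omega u^{-q}|v|\,dx\le \eta(u)\eta(v)$. The paper combines these via H\"older with exponents $\tfrac{1}{1-q},\tfrac1q$ on $(|v|u^{-q})^{1-q}(u^{1-q})^{q}$; you combine them via the pointwise concavity estimate $v^{1-q}\le q\,u^{1-q}+(1-q)u^{-q}v$ followed by a scaling optimization in $t$. These are two derivations of the same Young/AM--GM inequality, and your variant has the small merit of producing the constant $\eta(u)^{-2(1+q)/(1-q)}$ cleanly, which is exactly the optimal $\Re(U)$ of Proposition \ref{pp3.3}, without the spurious $C$ that the paper carries from \eqref{eq325}. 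On the ``main obstacle'' you identify, note that the paper has already done the work in Proposition \ref{prop3.1} (density in $\mathcal{X}^{1,2}_{\mathcal{D}}(U)$ plus lower semicontinuity of $\phi\mapsto\int u^{-q}\phi$); you may simply invoke it rather than redo a truncation with characteristic functions, which would not even stay in $\mathcal{X}^{1,2}_{\mathcal{D}}(U)$.

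Your sufficiency direction, by contrast, is genuinely different from the paper's, and this is where the real work lies. The paper's proof of this implication is essentially free of content: it appeals to Lemma \ref{lem4.3} and Theorem \ref{existence}, neither of which uses the hypothesis \eqref{eqmixed} at all (they rely only on the embedding of Remark \ref{r2.5}), so the hypothesis plays no role. You instead do an honest direct minimization of $I(v)=\tfrac12\eta(v)^2-\tfrac1{1-q}\int_\Omega(v^+)^{1-q}\,dx$ with coercivity supplied by \eqref{eqmixed}, which is self-contained and closer in spirit to Lemma \ref{lem310}. There are two gaps to fill. First, the weak lower semicontinuity step as written is imprecise: Remark \ref{r2.5} covers $r\in[1,2^*)$ but $1-q<1$; the right route is to use the compact embedding into $L^1_{\loc}$ and then a generalized dominated convergence argument for the subunitary power $(v_n^+)^{1-q}$. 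Second, and more significantly, the step ``the Euler--Lagrange equation for $I$ is then the equation in \eqref{P_q}'' is not automatic for a singular energy. You must first extract, from $I(u+\epsilon\varphi)\ge I(u)$ with $0\le\varphi\in C^\infty_0(\Omega)$ and $\epsilon\downarrow0$, the variational inequality $\langle u,\varphi\rangle\ge\int_\Omega u^{-q}\varphi\,dx\ge0$; this shows $u$ is a weak supersolution of $\mathcal{L}u\ge0$ in $\Omega$ and, since $u\not\equiv 0$, the weak Harnack inequality (the reference \cite{Kinnunen} used in Lemma \ref{l3.1}) gives $u\ge\beta(\omega)>0$ on every $\omega\Subset\Omega$. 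Only after this local positivity is in hand can you differentiate $I$ at $u$ in arbitrary directions $\varphi\in C^\infty_0(\Omega)$ (since $u+\epsilon\varphi\ge0$ for small $\epsilon$) and obtain the weak formulation with equality. Your proposal gestures at this by citing Lemma \ref{l3.1}, but Lemma \ref{l3.1} concerns solutions of a linear auxiliary problem, not minimizers of $I$; the argument I just sketched is what is needed to bridge the gap.
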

\begin{theorem}\label{2} Consider $(P_\lambda)$ with $g(u)= \lambda u^{-q}+u^p$  where $0<q<1<p\leq 2^*-1$, then 
 \begin{enumerate}
     \item There exists a threshold $\Lambda>0$
 such that $(P_\lambda)$ possesses at least two weak solutions $u_\lambda^*$
and $v_\lambda^*$ in  $\mathcal{X}^{1,2}_{\mathcal{D}}(U)$, whenever $\lambda \in (0, \Lambda)$ and $p\in (1,2^*-1)$,
\item $(P_\lambda)$ possesses atleast one weak solution for each $\lambda>0$ when $p=2^*-1$.
 \end{enumerate} 
 \end{theorem}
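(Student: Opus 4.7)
\textbf{Proof proposal for Theorem \ref{2}.}
My plan is to attack both parts with a Nehari manifold decomposition applied to the (non-$C^1$) energy functional
\[
I_\lambda(u) = \tfrac{1}{2}\eta(u)^2 - \tfrac{\lambda}{1-q}\int_\Omega (u^+)^{1-q}\,dx - \tfrac{1}{p+1}\int_\Omega (u^+)^{p+1}\,dx,
\]
where the singular term is handled throughout by the standard truncation $(u+\tfrac{1}{n})^{-q}$, solving the resulting regular problems via Lemma \ref{l3.1}, and passing to the limit using the strict positivity \eqref{strict-pos} together with the mixed Sobolev inequality of Theorem \ref{thm2.4}. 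Analyzing the fibering map $\phi_u(t)=I_\lambda(tu)$ gives the equation $t\,\eta(u)^2=\lambda t^{-q}\int_\Omega (u^+)^{1-q}\,dx+t^p\int_\Omega(u^+)^{p+1}\,dx$, whose structure $At=Bt^{-q}+Ct^p$ (with $A,B,C>0$) has for small $\lambda$ exactly two positive roots, producing the splitting $\mathcal{N}_\lambda=\mathcal{N}^+_\lambda\cup\mathcal{N}^0_\lambda\cup\mathcal{N}^-_\lambda$ according to the sign of $\phi_u''(1)$. Combining the Poincar\'e inequality of Proposition \ref{Poin} with the continuous embedding in Remark \ref{r2.5} produces an explicit $\Lambda=\Lambda(p,q,N,\Omega)>0$ such that $\mathcal{N}^0_\lambda=\emptyset$ whenever $\lambda\in(0,\Lambda)$, so that $\mathcal{N}^+_\lambda$ and $\mathcal{N}^-_\lambda$ are genuinely separated.

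\textbf{Part (1) (subcritical $p\in(1,2^*-1)$).} I would produce the two solutions by separate minimization on $\mathcal{N}^+_\lambda$ and on $\mathcal{N}^-_\lambda$. On $\mathcal{N}^+_\lambda$ one shows $\inf I_\lambda<0$ by testing with a multiple of the solution of \eqref{l31} with $f\equiv 1$; pick a minimizing sequence $\{u_n\}$, extract a weak limit $u^*_\lambda\in\mathcal{X}^{1,2}_{\mathcal{D}}(U)$, and use Remark \ref{r2.5} to get $u_n\to u^*_\lambda$ in $L^r_{\loc}$ for every $r\in[1,2^*)$, so that both subcritical terms pass to the limit. For the singular term, the uniform pointwise lower bound $u_n\geq\beta_\omega>0$ on $\omega\Subset\Omega$ coming from Lemma \ref{l3.1} applied to a suitable subsolution makes $(u_n^+)^{-q}$ locally bounded, and dominated convergence applies. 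A standard fiber-map/convexity argument then forces $u_n\to u^*_\lambda$ strongly, giving $u^*_\lambda\in\mathcal{N}^+_\lambda$ as the first weak solution. An analogous minimization on $\mathcal{N}^-_\lambda$, where $\inf I_\lambda>0$, produces $v^*_\lambda\neq u^*_\lambda$ because of the disjointness of the two components.

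\textbf{Part (2) (critical $p=2^*-1$).} I would obtain one solution by minimizing $I_\lambda$ on $\mathcal{N}^+_\lambda$, which by the fiber-map analysis exists for every $\lambda>0$ (the $\mathcal{N}^+$-root is controlled by the balance between the $t^2$ and $t^{-q}$ terms and does not feel the critical exponent). A minimizing sequence is bounded by coercivity at the $\mathcal{N}^+$ level, and the singular term passes to the limit as in Part (1). The main obstacle is the loss of compactness in $\int(u_n^+)^{2^*}\,dx$. I would handle it via a Brezis--Lieb splitting of the critical term and show the strict sub-threshold estimate
\[
\inf_{\mathcal{N}^+_\lambda} I_\lambda < \tfrac{1}{N}\,S^{N/2},
\]
where $S$ is the best Sobolev constant associated with $\eta(\cdot)$, by testing with truncated Aubin--Talenti profiles supported in $\Omega$ so that the $\mathcal{N}$-side of the boundary plays no role in the concentration. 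This strict inequality rules out bubbling in the Brezis--Lieb residual and yields strong convergence.

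\textbf{Main obstacle.} The critical-exponent sub-threshold bound is the most delicate step: the mixed operator forces one to track the nonlocal term $[u_\e]_s^2$ alongside $\|\nabla u_\e\|_{L^2}^2$ through the concentration analysis, and the partial Neumann data must be arranged to not pollute the asymptotics of the test functions; the contribution of the singular term $\lambda\int (u_\e^+)^{1-q}$ will be what actually pulls the minimal level below $\frac{1}{N}S^{N/2}$, allowing the argument to go through for every $\lambda>0$.
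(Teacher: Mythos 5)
Your Part (1) follows essentially the same route as the paper: the same fibering-map analysis, the splitting $\mathcal{N}_\lambda=\mathcal{N}^+_\lambda\cup\mathcal{N}^0_\lambda\cup\mathcal{N}^-_\lambda$, separate minimization on the two components, and a Brezis--Lieb plus fiber-map argument to upgrade weak to strong convergence of minimizing sequences. Two points, however, are genuine gaps rather than matters of presentation.

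First, you never explain how a constrained minimizer on $\mathcal{N}^\pm_\lambda$ becomes a weak solution in the sense of Definition \ref{d1}. Since the energy functional is not $C^1$ because of the singular term, the Lagrange-multiplier shortcut is unavailable, and this passage is one of the main technical burdens of the paper. There, one constructs $f=\beta\phi_1$ from the first eigenfunction of $\mathcal{L}$, proves the comparison $u\ge f$ and $\tilde u\ge f$ a.e.\ (Lemma \ref{le05}), derives the one-sided variational inequalities of Lemma \ref{lem3.6} by monotone convergence, and only then uses the perturbations $u+\epsilon\zeta$ and $t_\epsilon(\tilde u+\epsilon\zeta)$ with $t_\epsilon\to1$ (Lemma \ref{3.5}) to recover the weak formulation for arbitrary $\zeta\in C^\infty_0(\Omega)$. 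Your phrase about a ``uniform pointwise lower bound coming from Lemma \ref{l3.1} applied to a suitable subsolution'' gestures at the positivity but supplies neither the comparison step nor the one-sided inequality/perturbation mechanism; without these, the claim that $u^*_\lambda$ and $v^*_\lambda$ actually solve $(P_\lambda)$ is unproved.

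Second, your treatment of the critical case diverges from the paper's and, as written, would not go through. For minimization on $\mathcal{N}^+_\lambda$ at a negative energy level the relevant obstruction is not the mountain-pass threshold $\frac{1}{N}S^{N/2}$ (that threshold governs the second solution on $\mathcal{N}^-_\lambda$, which the paper does not obtain in the critical case); instead the paper builds into the definition of $\Lambda$ for $p=2^*-1$ an a priori norm bound on the $\mathcal{N}^+$ component, namely $\eta(u)\le(2/2^*)^{2/(2^*-2)}S^{2^*/(2^*-2)}$, which prevents the Brezis--Lieb residual from carrying a nontrivial bubble. Your plan to establish $\inf_{\mathcal{N}^+_\lambda}I_\lambda<\frac{1}{N}S^{N/2}$ by truncated Aubin--Talenti profiles adapted to the mixed operator under mixed boundary conditions is precisely the sharp minimizer estimate that the paper explicitly declares an open problem, and you offer no computation for it. Moreover, the mechanism you invoke to get existence ``for every $\lambda>0$'' cannot work: testing the equation with $\phi_1$, as in the paper's final lemma, shows that no positive weak solution exists once $\lambda$ exceeds an explicit $\mu^*$, so the existence range in the critical case must in fact be restricted to $\lambda\in(0,\Lambda)$ with $\Lambda<\infty$.
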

 \begin{theorem}\label{3}
      Any weak solution of $(P_\lambda)$ belongs to $L^\infty(U)$.
 \end{theorem}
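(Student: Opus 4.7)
\textbf{Proof plan for Theorem \ref{3}.}
The plan is a Moser-type iteration on $\Omega$ adapted to the mixed operator $\mathcal{L}$, combined with a Brezis--Kato style truncation to absorb the critical nonlinearity, followed by a short argument to pass from $L^\infty(\Omega)$ to $L^\infty(U)$ via the nonlocal Neumann condition (exactly as in the last paragraph of the proof of Lemma~\ref{linfty}).

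Let $u\in\mathcal{X}^{1,2}_{\mathcal{D}}(U)$ be a weak solution of $(P_\lambda)$. For $k>0$ and $\beta\ge 0$ set $u_k:=\min\{u,k\}$ and test \eqref{dd2.2} with $\varphi = u_k^{2\beta+1}$; admissibility follows from $u\in\mathcal{X}^{1,2}_{\mathcal{D}}(U)$ and the standard truncation properties (the test function vanishes on $U^c$). The local part gives $\tfrac{2\beta+1}{(\beta+1)^2}\int_\Omega|\nabla u_k^{\beta+1}|^2\,dx$, while the classical pointwise inequality $(a-b)(a_k^{2\beta+1}-b_k^{2\beta+1})\ge c_\beta(a_k^{\beta+1}-b_k^{\beta+1})^2$ produces a matching lower bound of the form $c_\beta [u_k^{\beta+1}]_s^2$ for the nonlocal part. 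Combined with the Sobolev embedding of Remark \ref{r2.5}, this yields the master inequality
\begin{equation*}
\Big(\int_\Omega u_k^{(\beta+1)2^{*}}\,dx\Big)^{2/2^{*}}
\le C(\beta)\int_\Omega g(u)\,u_k^{2\beta+1}\,dx.
\end{equation*}

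The right-hand side is treated piece by piece. Since $0<u_k\le u$ a.e.\ on $\{u>0\}$ and $0<q<1$, Young's inequality gives
$\int_\Omega \lambda u^{-q}u_k^{2\beta+1}\,dx\le \lambda\int_\Omega u_k^{2\beta+1-q}\,dx\le C\bigl(|\Omega|+\int_\Omega u_k^{2\beta+2}\,dx\bigr)$,
so the singular term is benign. For the power term in the subcritical regime $1<p<2^{*}-1$, H\"older with $u\in L^{2^{*}}(\Omega)$ yields a bound $C(\beta)\|u\|_{L^{2^{*}}}^{p-1}\bigl(\int_\Omega u_k^{(\beta+1)\sigma}\bigr)^{2/\sigma}$ for some $\sigma<2^{*}$, closing the iteration. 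For the critical case $p=2^{*}-1$, I would split $\Omega=\{u\le M\}\cup\{u>M\}$: on the first region $u^{2^{*}-1}\le M^{2^{*}-2}u$ gives a bounded coefficient, while on the second H\"older yields
$\int_{\{u>M\}} u^{2^{*}-2}\,u_k^{2(\beta+1)}\,dx\le \Big(\int_{\{u>M\}} u^{2^{*}}\Big)^{(2^{*}-2)/2^{*}}\Big(\int_\Omega u_k^{(\beta+1)2^{*}}\Big)^{2/2^{*}}$.
Since $u\in L^{2^{*}}(\Omega)$, the first factor is made arbitrarily small by choosing $M$ large (uniformly in $\beta$), so this term can be absorbed into the left-hand side of the master inequality.

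Sending $k\to\infty$ by monotone convergence, the recursion gains a factor $2^{*}/2>1$ in the integrability exponent at each step, and a standard Moser bootstrap yields $\|u\|_{L^\infty(\Omega)}<\infty$. Finally, for $x\in\mathcal{N}$, $\mathcal{N}_s u(x)=0$ together with \eqref{normal} gives $u(x)=\bigl(\int_\Omega u(y)|x-y|^{-N-2s}\,dy\bigr)\big/\bigl(\int_\Omega|x-y|^{-N-2s}\,dy\bigr)$, whence $|u(x)|\le\|u\|_{L^\infty(\Omega)}$; combined with $u\equiv 0$ on $\mathcal{D}$ this gives $u\in L^\infty(U)$. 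The main obstacle is the critical case $p=2^{*}-1$: the only a priori integrability of $u$ is $L^{2^{*}}$, which is exactly the threshold at which a bare Moser iteration degenerates. The Brezis--Kato-style absorption, powered by the vanishing $\int_{\{u>M\}} u^{2^{*}}\to 0$ as $M\to\infty$, is what salvages the argument; the singular term, by contrast, is harmless thanks to $0<q<1$ keeping $2\beta+1-q<2\beta+2$.
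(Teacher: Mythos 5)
Your overall plan coincides with the paper's: a Moser iteration driven by the Sobolev embedding of Remark \ref{r2.5}, with a Brezis--Kato absorption over a large-level set to tame the critical nonlinearity, the singular term rendered benign by $0<q<1$, and the final step from $L^\infty(\Omega)$ to $L^\infty(U)$ handled identically via the pointwise formula coming from $\mathcal{N}_s u=0$. The paper's technical implementation, however, uses a different test-function family: the piecewise Lipschitz truncation $\varphi_{T,\beta}(\sigma)$ (equal to $\sigma^\beta$ up to level $T$, then affine), tested against $\varphi'_{T,\beta}(u)\varphi_{T,\beta}(u)$, so that the pointwise identity $u\varphi'(u)\le\beta\varphi(u)$ keeps both sides of the master inequality expressed in terms of the same object $\varphi(u)$; the Sobolev embedding used there is $\mathcal{X}^{1,2}_{\mathcal{D}}(U)\hookrightarrow L^{p+1}$, so the exponent gain per step is $\tfrac{p+1}{2}$, and the absorption is invoked once (for the base case $\beta_1=\tfrac{p+1}{2}$) rather than uniformly in $\beta$.

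There is a concrete technical gap in your critical-case step. With the test function $\varphi=u_k^{2\beta+1}$, $u_k=\min\{u,k\}$, the right-hand side produces $\int_\Omega u^{2^*-1}u_k^{2\beta+1}\,dx$, while the left-hand side (via Sobolev) controls $\bigl(\int_\Omega u_k^{(\beta+1)2^*}\,dx\bigr)^{2/2^*}$. To absorb, you write the large-level piece as $\int_{\{u>M\}}u^{2^*-2}u_k^{2(\beta+1)}\,dx$, but this is \emph{not} an upper bound for $\int_{\{u>M\}}u^{2^*-1}u_k^{2\beta+1}\,dx$: on $\{u>k\}$ one has $u_k=k$, and $u^{2^*-1}k^{2\beta+1}=(u/k)\cdot u^{2^*-2}k^{2\beta+2}>u^{2^*-2}u_k^{2(\beta+1)}$, so the inequality goes the wrong way precisely where $M$ is large. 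Equivalently, after the correct splitting $u^{2^*-1}u_k^{2\beta+1}=u^{2^*-2}\bigl(u\,u_k^{2\beta+1}\bigr)$ and H\"older, the second factor is $\|u\,u_k^{2\beta+1}\|_{L^{2^*/2}}$, which does not reduce to $\|u_k^{\beta+1}\|_{L^{2^*}}^2$ (again because $u\,u_k^{2\beta+1}>u_k^{2(\beta+1)}$ on $\{u>k\}$). The standard fix is to test instead with $\varphi=u\,u_k^{2\beta}$: then the right-hand side becomes $\int u^{2^*-2}(u\,u_k^{\beta})^2$, the Sobolev left-hand side controls $\|u\,u_k^{\beta}\|_{L^{2^*}}^2$, and the absorption is coherent (this is the genuine Brezis--Kato choice, and the exponent gain $\tfrac{2^*}{2}$ you quoted is then correct). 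With $u_k^{2\beta+1}$ the gain you obtain via plain H\"older in the subcritical regime is in fact $\tfrac{2^*-p+1}{2}$, not $\tfrac{2^*}{2}$ -- still $>1$ for $p<2^*-1$, so the subcritical iteration closes, but the stated factor is off. Once you replace the test function, your proposal matches the paper's result by an equally valid route.
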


\section{Purely singular problem }
This section contains proof of the main Theorem \ref{th2} and \ref{thm2.4}. The idea is to employ an approximation method by modifying(or truncating) the singular term $\frac{1}{u^q}$ so that we remove its singularity at the origin. We will then analyze the behaviour of a sequence $\{u_n\}$ which represents the solutions of the approximated problem $\eqref{P_nq}$, defined below.
Let us consider $(P_\lambda)$ with $g(u)=u^{-q}$ that is
\begin{equation*}\label{P_q}
\begin{cases}\tag{$P_q$}   
     \mathcal{L}u &= ~{u^{-q}}, \quad u>0 \quad 
 \text{in }\Omega,\\ 
  u&=~~0~~\text{in} ~~U^c,\\
 \mathcal{N}_s(u)&=~~0 ~~\text{in} ~~{{\mathcal{N}}}, \\
 \frac{\partial u}{\partial \nu}&=~~0 ~~\text{in}~~ \partial \Omega \cap \overline{{\mathcal{N}}},
      \end{cases}
    \end{equation*}
where $0<q\leq 1,$ and other notations mean same as defined before.

\textbf{Approximation to \eqref{P_q}:}
We define the sequence $\{u_n\}$ which are solutions to the following approximated problem 
\begin{equation*}\label{P_nq}
\begin{cases}\tag{$P_{n,q}$}
   \mathcal{L}u_n &=~  \frac{1}{\left(u_n+\frac{1}{n}\right)^q}, ~u_n>0 ~\text{in}~\Omega, \\ 
  u_n&=~0~~\text{in} ~~U^c,\\
 \mathcal{N}_s(u_n)&=~0 ~~\text{in} ~~{{\mathcal{N}}}, \\
 \frac{\partial u_n}{\partial \nu}&=~0 ~~\text{in}~~ \partial \Omega \cap \overline{{\mathcal{N}}}.
      \end{cases} 
    \end{equation*}
Firstly, we establish the existence and uniqueness of a solution to nonsingular problem \eqref{P_nq}.
\begin{lemma}\label{lem4.1}
For any $n \in \mathbb{N}$,   problem \eqref{P_nq} has a unique  solution $u_n \in \mathcal{X}^{1,2}_{\mathcal{D}}(U)\cap L^{\infty}(U)$.    
\end{lemma}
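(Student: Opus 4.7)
I would prove this variationally by exploiting that the truncated nonlinearity $t\mapsto (t+1/n)^{-q}$ is globally bounded by $n^q$ on $t\geq 0$, which brings \eqref{P_nq} into the framework of the linear auxiliary problem \eqref{l31} handled in Lemmas \ref{l3.1} and \ref{linfty}. To avoid an a priori sign hypothesis, I would work with the symmetrised source $g_n(t):=(t^++1/n)^{-q}$ and its primitive $G_n(t):=\int_0^t g_n(\sigma)\,d\sigma$, and recover positivity of the minimiser afterwards.

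\textbf{Minimisation and regularity.} On $\mathcal{X}^{1,2}_{\mathcal{D}}(U)$ consider
\begin{equation*}
J_n(u)=\tfrac{1}{2}\eta(u)^2-\int_\Omega G_n(u)\,dx.
\end{equation*}
Since $0\leq g_n\leq n^q$ implies $|G_n(t)|\leq n^q|t|$, Proposition \ref{Poin} yields $J_n(u)\geq \tfrac{1}{2}\eta(u)^2-C\,n^q\eta(u)$, so $J_n$ is coercive. It is strictly convex (the Hilbert norm squared is strictly convex, and $g_n$ being non-increasing makes $G_n$ concave, hence $-\int_\Omega G_n(u)\,dx$ convex) and weakly lower semicontinuous (the quadratic part by convexity, the nonlinear part by strong $L^2(\Omega)$-convergence along weakly convergent sequences, via the compact embedding of Remark \ref{r2.5}). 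The direct method thus yields a unique minimiser $u_n$, whose Euler--Lagrange equation is exactly the weak formulation of $\mathcal{L}u_n=g_n(u_n)$ with the prescribed mixed boundary conditions. Testing it with $\varphi=u_n^-$ and repeating the sign computation performed in Lemma \ref{l3.1} gives $\eta(u_n^-)^2\leq 0$, so $u_n\geq 0$ a.e.; consequently $g_n(u_n)=(u_n+1/n)^{-q}\in L^\infty(\Omega)$ with norm at most $n^q$, and Lemma \ref{linfty} delivers $u_n\in L^\infty(U)$. Once $\|u_n\|_{L^\infty}$ is finite, the lower bound $(u_n+1/n)^{-q}\geq(\|u_n\|_{L^\infty}+1/n)^{-q}>0$ shows that the source is not identically zero, so \eqref{strict-pos} in Lemma \ref{l3.1} applies and produces $u_n\geq\beta(\omega)>0$ on every $\omega\Subset\Omega$; in particular $u_n>0$ in $\Omega$, and $u_n$ genuinely solves \eqref{P_nq}.

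\textbf{Uniqueness and main obstacle.} If $u_n,\tilde u_n$ are both solutions of \eqref{P_nq}, subtracting the weak formulations and testing with $\varphi=u_n-\tilde u_n\in\mathcal{X}^{1,2}_{\mathcal{D}}(U)$ gives
\begin{equation*}
\eta(u_n-\tilde u_n)^2=\int_\Omega \bigl[(u_n+1/n)^{-q}-(\tilde u_n+1/n)^{-q}\bigr](u_n-\tilde u_n)\,dx\leq 0,
\end{equation*}
because $t\mapsto (t+1/n)^{-q}$ is strictly decreasing on $[0,\infty)$; together with Proposition \ref{Poin} this forces $u_n=\tilde u_n$. Conceptually nothing is hard because the $1/n$-truncation turns the singular nonlinearity into a bounded Lipschitz source; the only point requiring attention is the ordering of arguments for strict positivity, since one must first secure $u_n\in L^\infty(U)$ in order to bound the right-hand side \emph{from below} by a positive constant, and only then can the lower estimate \eqref{strict-pos} of Lemma \ref{l3.1} be invoked.
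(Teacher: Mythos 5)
Your proof is correct, but it takes a genuinely different route from the paper. The paper proves Lemma \ref{lem4.1} via Schauder's fixed point theorem: it defines the map $T:v\mapsto w$ where $w$ is the unique solution (from Lemmas \ref{l3.1}--\ref{linfty}) of the linear problem with source $(v^++1/n)^{-q}$, then shows that $T$ maps a ball into itself and is both continuous and compact from $\mathcal{X}^{1,2}_{\mathcal{D}}(U)$ to itself, and extracts a fixed point. Uniqueness is then proved separately by the monotonicity argument of Lemma \ref{l3.1}. You instead exploit the variational structure directly: since the truncated source is the derivative of a \emph{concave} primitive $G_n$, the energy $J_n=\tfrac12\eta(\cdot)^2-\int_\Omega G_n(\cdot)$ is coercive and \emph{strictly convex}, and the direct method hands you the minimiser plus uniqueness in one stroke, with the Euler--Lagrange equation as the weak formulation. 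This is cleaner and avoids the work of verifying continuity and compactness of $T$; interestingly, the paper itself later uses essentially your functional (with the same $G_n$ up to an additive constant) in Lemma \ref{lem310} to recover $u_n$ as a minimiser of $\mathcal{I}_n$, so the two routes are both implicit in the paper.

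One small remark on the ordering in your positivity step: you do not actually need $u_n\in L^\infty(U)$ before invoking \eqref{strict-pos}. Once $u_n\geq 0$ you already have $0< g_n(u_n)=(u_n+1/n)^{-q}\leq n^q$ pointwise a.e.\ in $\Omega$, so the source is in $L^\infty(\Omega)\setminus\{0\}$ and nonnegative without any prior $L^\infty$ control on $u_n$ itself, and Lemma \ref{l3.1} (both the $L^\infty$ regularity via Lemma \ref{linfty} and the strict positivity \eqref{strict-pos}) applies directly. Your passage through $\|u_n\|_{L^\infty}$ is harmless but not needed to bound the source from below. Everything else, including the strict monotonicity argument for uniqueness (which duplicates what strict convexity already gives), is sound.
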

\begin{proof}
 We fix $n \in \mathbb{N}$ and for any $v \in L^2(U)$,  Lemma \ref{l3.1} and Lemma \ref{linfty} guarantees the existence of a unique solution $w\in \mathcal{X}^{1,2}_{\mathcal{D}}(U)\cap L^{\infty}(U)$ to the following problem
\begin{equation}\label{eq401}
\begin{cases}
\mathcal{L} w & =~\frac{1}{\left(v^{+}+\frac{1}{n}\right)^q}, ~ w>0 \quad \text{in }\Omega,\\
w&=~0~~\text{in} ~~U^c,\\
 \mathcal{N}_s(w)&=~0 ~~\text{in} ~~{{\mathcal{N}}}, \\
 \frac{\partial w}{\partial \nu}&=~0 ~~\text{in}~~ \partial \Omega \cap \overline{{\mathcal{N}}},
    \end{cases}
    \end{equation}
 where $v^+=\max\{v,0\}$. We define the map $L^2(U)\ni v \mapsto w=T(v)\in \mathcal{X}^{1,2}_{\mathcal{D}}(U)\subset L^2(U)$, where $w$ is a unique solution to \eqref{eq401}.
 Testing \eqref{eq401} with $w$ and using Remark \ref{r2.5} then  we get
\begin{equation}\label{eal}
    \begin{aligned}
\int_{\Omega} |\nabla w(x)|^2\,dx+\int_{Q }\frac{|w(x)-w(y)|^2}{|x-y|^{N+2 s}} d x d y =\int_{\Omega} \frac{ w}{\left(v^{+}+\frac{1}{n}\right)^q}\,dx \leq \int_{\Omega} \frac{|w|}{\left(\frac{1}{n}\right)^q}\, dx \leq n^{q}\|w\|_{L^1(U)},
       \end{aligned}
\end{equation}
where we have used the fact that $v^+\geq 0$ a.e. in $\R^N$ and $v^+ +\frac{1}{n}\geq \frac{1}{n}$.
Now using H\"{o}lder inequality and Remark \ref{r2.5}, there exists some suitable constant  $C>0$ such that
$$
\begin{aligned}
   \|w\|_{L^1(U)}&\leq |U|^{\frac{N+2}{2N}}\|w\|_{L^{2^*} (U)} \leq C|U|^{\frac{N+2}{2N}} \eta(w).  
    \end{aligned}
$$
So, from \eqref{eal}, we conclude that 
$$
\eta(w)^2  \leq C n^q|U|^{\frac{N+2}{2N}} \eta(w).
$$
Hence, we infer that  $\eta(w)\leq C_1$ for some constant (independent of $v)$, say $C_1=C_1(n,q, C,|U|)>0$ ,  so that the ball of radius $C_1$ in $\mathcal{X}^{1,2}_D(U)$ is invariant under $T$ in $\mathcal{X}^{1,2}_{\mathcal{D}}(U)$.  We need to make sure that the operator $T$ from $\mathcal{X}^{1,2}_{\mathcal{D}}(U)$ to $\mathcal{X}^{1,2}_{\mathcal{D}}(U)$ is continuous and compact to apply the Schauder's Fixed Point Theorem over $T$ to obtain a solution to \eqref{P_nq} in $\mathcal{X}^{1,2}_{\mathcal{D}}(U)$.\\
\textbf{Claim(1) $T$ is continuous:} To accomplish this, we shall prove that for $w_k=T\left(v_k\right)$ and $w=T(v)$, it must hold
$$
\lim _{k \rightarrow \infty}\eta(w_k-w)=0 \text { whenever } \lim _{k \rightarrow \infty}\eta(v_k-v)=0.
$$
Observe that based on $v_k \to v$  in $\mathcal{X}^{1,2}_{\mathcal{D}}(U)$, $U$ is bounded and Remark \ref{r2.5}, we have
\begin{equation}\label{scgt}
\begin{aligned}
 v_k \rightarrow v \text { in } L^{r}_{loc}(\R^N), ~1\leq r< 2^* ~~\text{then}~
 v_k \rightarrow v \text { a.e. in } \R^N.
\end{aligned}
    \end{equation}
We know that 
\[\mathcal{L}(w_k-w)= \frac{1}{\left(v_k^{+}+\frac{1}{n}\right)^q}-\frac{1}{\left(v^{+}+\frac{1}{n}\right)^q}~\text{in}~\Omega.\]
So testing it with $(w_k-w)$ and using  H\"{o}lder inequality,  we obtain
\begin{equation}\label{eqin}
\begin{aligned}
 \eta(w_k-w)^2 
 \leq\left\|w_k-w\right\|_{L^{2^*}(U)}\left(\int_{U}\left(\frac{1}{\left(v_k^{+}+\frac{1}{n}\right)^q}-\frac{1}{\left(v^{+}+\frac{1}{n}\right)^q}\right)^{\frac{2 N}{N+2 }} d x\right)^{\frac{N+2}{2N }},
\end{aligned}
    \end{equation}
where $\frac{2 N}{N+2 }<2^*$. From Remark \ref{r2.5}, for some constant $C(U)>0$ we have
$$
\eta(w_k-w) \leq C(U)\left(\int_{U}\left(\frac{1}{\left(v_k^{+}+\frac{1}{n}\right)^q}-\frac{1}{\left(v^{+}+\frac{1}{n}\right)^q}\right)^{2^*} d x\right)^{1 / 2^*}.
$$
Now, it is easy to see that
$
\frac{1}{\left(v_k^{+}+\frac{1}{n}\right)^q} \leq n^{q}$  and  $\frac{1}{\left(v^{+}+\frac{1}{n}\right)^q} \leq n^{q}
$, so  using  Dominated Convergence Theorem along with \eqref{scgt}, we deduce that
$$
\eta(w_k-w) \rightarrow 0 \text { as } k \rightarrow \infty.
$$
This proves that $T$ is continuous from $\mathcal{X}^{1,2}_{\mathcal{D}}(U)$ to $\mathcal{X}^{1,2}_{\mathcal{D}}(U)$.\\
\textbf{Claim(2) $T$ is compact:} Let us take a bounded sequence $\left\{v_k\right\}_{k \geq 1} \in\mathcal{X}^{1,2}_{\mathcal{D}}(U)$   i.e.  there exists a constant $M>0$ such that $\eta(v_k) \leq M$, for all $k$. Therefore, by compact embedding (see Remark \ref{r2.5}) we deduce that, up to a subsequence,
\begin{align}
\begin{cases}
v_k \rightharpoonup v \mbox{ weakly in } \mathcal{X}^{1,2}_{{\mathcal{D}}}(U)\\
v_k \to v \mbox{ strongly in } \ L^r _{\mathrm{loc}}(\R^N),~~1\leq r<2^*,\\ 
v_k \to v \mbox{ a.e in }  \R^N.
\end{cases}
\end{align}
Furthermore, for some $M_1>0$ which is independent of $k$, we have
$
\eta(T(v_k)) \leq M_1,
$
since $T$ is continuous. Thus, for some $\bar{w}\in \mathcal{X}^{1,2}_D(U)$ we can write
\begin{equation}
\begin{aligned}\label{12405}
 T\left(v_k\right) \rightharpoonup \bar{w} \text { in } \mathcal{X}^{1,2}_{\mathcal{D}}(U),~ T\left(v_k\right) \rightarrow \bar{w} \text { in } L^r_{loc}(\R^N),~\text{for}~ 1 \leq r<2^*.
\end{aligned}
    \end{equation}
Due to the continuity of  $T$, necessarily $\bar{w}=T(v)$. A similar approach as in \eqref{eqin} gives
$$\eta(T(v_k)- T(v))^2\leq \|T(v_k)-T(v)\|_{L^{2}(U)}\left(\int_{U}\left|\frac{1}{\left(v_k^{+}+\frac{1}{n}\right)^q}-\frac{1}{\left(v^{+}+\frac{1}{n}\right)^q}\right|^{2} d x\right)^{\frac{1}{2 }},$$
so using \eqref{12405}, we conclude that 
$$
\lim _{k \rightarrow \infty} \eta\left(T(v_k)-T(v)\right)=0.
$$
Hence, $T$ is compact from $\mathcal{X}^{1,2}_{\mathcal{D}}(U)$ to $\mathcal{X}^{1,2}_{\mathcal{D}}(U)$.
So,  Schauder Fixed Point Theorem provides the existence of $u_n \in\mathcal{X}^{1,2}_{\mathcal{D}}(U)$, for each $n$ such that $u_n=T\left(u_n\right)$, i.e. $u_n$ solves \eqref{eq401}. 
Next, easy to see that $\frac{1}{u_n+ \frac{1}{n}}\in L^{\infty}(U)$ then by using Lemma \ref{linfty}, we have $u_n\in L^{\infty}(U). $ 
A similar approach as in the proof of Lemma \ref{l3.1} establishes the uniqueness of the solution.
  \end{proof}
  
  Furthermore, we establish the following monotonicity property for the sequence 
 $\{u_n\}$ which are the solutions of \eqref{P_nq}.
\begin{lemma}\label{u_n incre}
The sequence $\{u_n\} \subset \mathcal{X}^{1,2}_{\mathcal{D}}(U)\cap L^{\infty}(U)$ as obtained in Lemma \ref{lem4.1}, satisfies
\begin{enumerate}
    \item $u_n\leq u_{n+1},$ a.e. in $\R^N.$
    \item Furthermore, for some constant $\beta=\beta(\omega)>0,$ 
\begin{equation}\label{increseq}
u_n(x) \geq \beta >0,~\text {for a.e.}~ x \in  \omega \Subset \Omega,  ~\forall~ n \in \mathbb{N}.
  \end{equation}
\end{enumerate}
    \end{lemma}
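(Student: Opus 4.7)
The plan is to handle assertion (1) by a difference-testing argument comparing the two consecutive approximating problems, and then to deduce assertion (2) as a direct consequence by combining (1) with the strict positivity furnished by Lemma \ref{l3.1}.

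For (1), I would set $w := u_n - u_{n+1} \in \mathcal{X}^{1,2}_{\mathcal{D}}(U)$ and use $w^+ := (u_n - u_{n+1})^+$ as a test function; it lies in $\mathcal{X}^{1,2}_{\mathcal{D}}(U)$ since truncation preserves $H^1$ and $u_n, u_{n+1}$ both vanish on $U^c$. Subtracting the weak formulations of \eqref{P_nq} at levels $n$ and $n+1$ and testing against $w^+$ gives
\[
\int_\Omega \nabla w \cdot \nabla w^+ \, dx + \int_Q \frac{(w(x)-w(y))(w^+(x)-w^+(y))}{|x-y|^{N+2s}} \, dx\,dy = \int_\Omega \left( \frac{1}{(u_n+\frac{1}{n})^q} - \frac{1}{(u_{n+1}+\frac{1}{n+1})^q}\right) w^+ \, dx.
\]
The local term on the left equals $\|\nabla w^+\|_{L^2(\Omega)}^2$, and by the decomposition $w = w^+ - w^-$ together with the observation $w^+(x) w^-(y) + w^+(y) w^-(x) \geq 0$ (exactly as in the proof of Lemma \ref{l3.1}), the nonlocal term dominates $[w^+]_s^2$. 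Hence the left-hand side is at least $\eta(w^+)^2 \geq 0$.

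For the right-hand side, on the support of $w^+$ we have $u_n > u_{n+1}$ and $\frac{1}{n} > \frac{1}{n+1}$, so $u_n + \frac{1}{n} > u_{n+1} + \frac{1}{n+1} > 0$; since $t \mapsto t^{-q}$ is strictly decreasing on $(0,\infty)$ for $q > 0$, the integrand is pointwise nonpositive. Combining, $\eta(w^+)^2 \leq 0$, which (via the Poincar\'e inequality of Proposition \ref{Poin}) forces $w^+ \equiv 0$ a.e.\ in $\mathbb{R}^N$, i.e.\ $u_n \leq u_{n+1}$. For (2), apply Lemma \ref{l3.1} to $u_1$ viewed as the unique solution of problem \eqref{l31} with right-hand side $f := (u_1 + 1)^{-q}$, which is nonnegative, not identically zero, and belongs to $L^\infty(\Omega)$ because $u_1 \in L^\infty(U)$. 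Lemma \ref{l3.1} yields a constant $\beta = \beta(\omega) > 0$ with $u_1(x) \geq \beta$ for a.e.\ $x \in \omega \Subset \Omega$; the monotonicity from (1) then gives $u_n(x) \geq u_1(x) \geq \beta$ uniformly in $n$.

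I do not expect a genuine technical obstacle: the nonlocal sign manipulation is a straightforward replay of the computation already carried out in Lemma \ref{l3.1}. The only conceptual point requiring care is the order of the two steps—(1) must precede (2), because the uniform-in-$n$ lower bound must be extracted from the single function $u_1$; applying Lemma \ref{l3.1} directly to each $u_n$ would produce constants $\beta_n = \beta(\omega, n)$ that might a priori degenerate with $n$.
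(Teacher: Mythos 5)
Your proof is correct and follows essentially the same route as the paper: subtract the two approximating problems, test with $(u_n - u_{n+1})^+$, use the sign of the singular terms on the support of the positive part together with the standard nonlocal manipulation to conclude $\eta((u_n-u_{n+1})^+)^2 \leq 0$, and then obtain (2) from Lemma~\ref{l3.1} applied to $u_1$ plus monotonicity. Your treatment of the right-hand side is marginally more direct (you observe the pointwise sign in one step rather than the paper's two-step comparison via inserting $\frac{1}{n+1}$), and your closing remark about why (1) must precede (2) to get an $n$-independent $\beta$ correctly identifies the logical point the paper's argument also relies on.
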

\begin{proof}

Subtracting the problems satisfied by $u_n$ and $u_{n+1}$, the following holds in $\Omega$
\begin{equation}\label{eq309}
\begin{aligned}
\mathcal{L}\left(u_n-u_{n+1}\right) & =\frac{1}{\left(u_n+\frac{1}{n}\right)^q}-\frac{1}{\left(u_{n+1}+\frac{1}{n+1}\right)^q} \leq \frac{1}{\left(u_n+\frac{1}{n+1}\right)^q}-\frac{1}{\left(u_{n+1}+\frac{1}{n+1}\right)^q},\\
& = \frac{\left(u_{n+1}+\frac{1}{n+1}\right)^q-\left(u_n+\frac{1}{n+1}\right)^q}{\left(u_n+\frac{1}{n+1}\right)^q\left(u_{n+1}+\frac{1}{n+1}\right)^q}.
\end{aligned}
    \end{equation}
By multiplying both sides of \eqref{eq309} with $\left(u_n-u_{n+1}\right)^{+}$ and integrating both sides, we get
\begin{equation}\label{eq3012}
\int_\Omega\mathcal{L}(u_n-u_{n+1}) \left(u_n-u_{n+1}\right)^{+}=\int_\Omega \frac{\left(u_{n+1}+\frac{1}{n+1}\right)^q-\left(u_n+\frac{1}{n+1}\right)^q}{\left(u_n+\frac{1}{n+1}\right)^q\left(u_{n+1}+\frac{1}{n+1}\right)^q}\left(u_n-u_{n+1}\right)^{+}\leq 0.
    \end{equation}
We set $w_n= u_n-u_{n+1}$
and aim to show that the following inequality holds
\begin{equation}\label{eq312}
    \int_{\Omega} \mathcal{L} (w_n) w_n^+\,dx \geq  \int_{\Omega} \mathcal{L}  (w_n^+) w_n^+\,dx.
\end{equation}
On the left hand side of \eqref{eq312}, we see that
\begin{equation}\label{eq313}
    \int_{\Omega} \mathcal{L}( w_n) w_n^+\,dx =  \int_{\Omega} \mathcal{L}  (w_n^+) w_n^+\,dx- \int_{\Omega} \mathcal{L} ( w_n^-) w_n^+\,dx.
\end{equation}
Using straightforward calculations, $\left(w_n^-(x)-w_n^-(y)\right)\left(w_n^+(x)-w_n^+(y)\right)\leq 0$ a.e. {in $\mathbb R^N$}, it follows that the second integral on the right-hand side of  \eqref{eq313} satisfies
$\int_{\Omega} \mathcal{L} ( w_n^-) w_n^+\,dx\leq 0.$
Thus, using  this fact in \eqref{eq313}, we obtain  \eqref{eq312}.
Hence,  we deduce that
$
0 \leq \eta(w_n^+)\leq 0.
$ 
Therefore $u_n \leq u_{n+1}$ a.e. in $\Omega$. This completes the proof of the first assertion.
Concerning the second assertion, applying Lemma \eqref{l3.1} for every $\omega \Subset \Omega$, there exists a constant $\beta=\beta(\omega)>0$ satisfying 
$$
u_1(x) \geq \beta >0,~\text {for a.e. } x \in  \omega \Subset \Omega. 
$$
Using the monotonicity argument, we can easily see that  $u_n\geq u_1$ a.e. in $\Omega$, $\forall~ n\in \N$. Thus, for every $n\in\N$, we have $u_n(x)\geq \beta(\omega)>0$ for a.e.  $x \in  \omega \Subset \Omega$, where $\beta(\omega)>0$ is a positive constant that is independent of $n$. Hence, the proof is complete now.
\end{proof}

  \subsection{Existence of weak solution}
 The objective now is to pass to the limit in the sequence \(\{u_n\}\) to obtain a solution for problem \eqref{P_q}.  We first establish some boundedness results for the sequence of positive (over any compact subset of $\Omega$) solutions \(\{u_n\}\) as stated in Lemma \ref{u_n incre} in two different cases below. These results play a crucial role in deriving the existence and regularity results for the problem \eqref{P_q}.

 \subsubsection{\texorpdfstring{Case: $q \in (0,1]$}{}}

We consider the low range of $q$ in this subsection.
\begin{lemma}\label{lem4.3}
Let \(\{u_n\} \subset \mathcal{X}^{1,2}_{\mathcal{D}}(U) \cap L^{\infty}(U)\) be a sequence of solutions to the approximated problem \eqref{P_nq}, as established in Lemma \ref{lem4.1}. Suppose \( 0 < q \leq 1 \) then
 the sequence \(\{u_n\}\) is uniformly bounded in \(\mathcal{X}^{1,2}_{\mathcal{D}}(U)\).
\end{lemma}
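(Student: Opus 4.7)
The natural strategy is to test the approximated problem \eqref{P_nq} with the solution $u_n$ itself and turn the resulting identity into a self-bounding inequality for $\eta(u_n)$. Since $u_n\in\mathcal{X}^{1,2}_{\mathcal{D}}(U)$ and the weak formulation extends from $C_0^\infty(\Omega)$ to $\mathcal{X}^{1,2}_{\mathcal{D}}(U)$ by the Corollary following Proposition \ref{P} (using that the Neumann-type boundary contributions vanish by the conditions $\mathcal{N}_s u_n=0$ on $\mathcal{N}$ and $\partial u_n/\partial\nu=0$ on $\partial\Omega\cap\overline{\mathcal{N}}$), testing with $\varphi=u_n$ yields
\begin{equation*}
\eta(u_n)^2 \;=\; \int_{\Omega} \frac{u_n}{\bigl(u_n+\tfrac{1}{n}\bigr)^{q}}\,dx.
\end{equation*}

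The next step is to estimate the right-hand side independently of $n$. Since $u_n\geq 0$ a.e., one has $u_n\leq u_n+\tfrac{1}{n}$ on the set where $u_n>0$, and therefore
\begin{equation*}
\frac{u_n}{\bigl(u_n+\tfrac{1}{n}\bigr)^{q}} \;\leq\; u_n^{\,1-q}\quad\text{a.e. in }\Omega,
\end{equation*}
the inequality being understood with the convention $0^{1-q}=0$. I will then treat the two subcases separately. When $q=1$ the right-hand side is bounded by $|\Omega|$ outright, giving $\eta(u_n)^2\leq|\Omega|$ with no further work. When $0<q<1$, since $1-q\in(0,1)\subset[1,2^*]$ (or, more precisely, since $1-q<2^*$), Hölder's inequality yields
\begin{equation*}
\int_{\Omega} u_n^{\,1-q}\,dx \;\leq\; |\Omega|^{\,1-\frac{1-q}{2^{*}}} \,\|u_n\|_{L^{2^{*}}(\Omega)}^{\,1-q},
\end{equation*}
and then Remark \ref{r2.5} (continuous embedding $\mathcal{X}^{1,2}_{\mathcal{D}}(U)\hookrightarrow L^{2^*}(\R^N)$) gives $\|u_n\|_{L^{2^{*}}(\Omega)}\leq C\,\eta(u_n)$ for a constant $C$ independent of $n$.

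Combining these two pieces, in the subcase $0<q<1$ we arrive at
\begin{equation*}
\eta(u_n)^2 \;\leq\; C_1\,\eta(u_n)^{\,1-q},
\end{equation*}
i.e.\ $\eta(u_n)^{1+q}\leq C_1$, so that $\eta(u_n)\leq C_1^{1/(1+q)}$ uniformly in $n$; in the subcase $q=1$ we directly have $\eta(u_n)\leq|\Omega|^{1/2}$. In either case the sequence $\{u_n\}$ is bounded in $\mathcal{X}^{1,2}_{\mathcal{D}}(U)$, proving the lemma.

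I do not expect a genuine obstacle here: the argument is the standard ``self-bounding'' trick for sublinear data. The only points that need a little care are (i) justifying that $u_n$ is an admissible test function, which requires the density/Corollary after Proposition \ref{P} so that the mixed boundary terms drop out and the bilinear form collapses to $\eta(u_n)^2$, and (ii) using $u_n\leq u_n+1/n$ (rather than something weaker) to kill the $1/n$ before passing to Sobolev embedding, which is what prevents the bound from deteriorating as $n\to\infty$. The regime $q=1$ has to be singled out because the exponent $1-q=0$ makes the Hölder--Sobolev chain trivial, but it is also the easiest case.
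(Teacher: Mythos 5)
Your proof is correct and follows essentially the same route as the paper's: test the approximated problem with $u_n$ itself, bound $\frac{u_n}{(u_n+1/n)^q}$ by $u_n^{1-q}$ (this is where the $1/n$ is killed uniformly), treat $q=1$ trivially, and for $0<q<1$ run Hölder followed by the Sobolev embedding of Remark \ref{r2.5} to get the self-bounding inequality $\eta(u_n)^2 \leq C\,\eta(u_n)^{1-q}$. The paper phrases the Hölder step with conjugate exponents $m,m'$ chosen so that $(1-q)m'=2^*$, which is exactly your exponent bookkeeping written slightly differently.
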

\begin{proof}
\textbf{Case I:} If $q=1$, taking  $u_n$ as a test function in \eqref{P_nq},  we obtain that for $q=1$
\begin{align*}
\eta(u_n)^2 \leq \int_{\Omega}  \frac{u_n}{u_n+1/n}dx \leq \int_{\Omega} 1~dx<+\infty.
\end{align*}
\text{Thus}, $\eta(u_n)\leq \kappa$, where $\kappa>0$ which is independent of $n.$ 

\textbf{Case II:} If $0<q<1$, taking again $u_n$ as a test function in $\eqref{P_nq}$, we have
\begin{equation}
    \begin{aligned}
\int_{\Omega} |\nabla u_n(x)|^2\,dx+\int_{Q }\frac{|u_n(x)-u_n(y)|^2}{|x-y|^{N+2 s}} d x d y &=\int_{\Omega} \frac{ u_n}{\left(u_n+\frac{1}{n}\right)^q}\,dx\leq \int_{\Omega} u_n^{1-q}\, dx.
       \end{aligned}
\end{equation}
Then using H\"{o}lder inequality and from Remark \ref{r2.5}, we obtain that 
$$
\begin{aligned}
\eta(u_n)^2 \leq \int_{\Omega}  u_n^{1-q}\,dx 
&\leq\left(\int_{\Omega} 1\,dx\right)^{\frac{1}{m}} \left(\int_{\Omega} u_n^{ {(1-q)}m'}\right)^{\frac{1}{m'}}  \leq C |\Omega|^{\frac{1}{m}}\eta(u_n)^{\frac{2^*}{m'}},
 \end{aligned}
$$
 where  $m'=\frac{m}{m-1}$  
and  $\frac{2^*}{m^\prime}=(1-q)<2$ and $C$ is embedding constant.
Thus, $\eta(u_n)\leq \kappa'$, $\kappa'>0$ which is independent of $n.$
Hence,  sequence $\{u_n\}$ is uniformly bounded in $\mathcal{X}^{1,2}_{\mathcal{D}}(U)$ for $0<q\leq 1$ which finishes the proof.
\end{proof}

\begin{theorem}\label{existence}
 Suppose $0<q\leq   1$. Then there exists a  solution $u \in\mathcal{X}^{1,2}_{\mathcal{D}}(U)$ of problem \eqref{P_q}.    \end{theorem}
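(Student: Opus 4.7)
The plan is to pass to the limit in the sequence $\{u_n\}$ of solutions to the approximating problem \eqref{P_nq} constructed in Lemma \ref{lem4.1}. By Lemma \ref{lem4.3}, $\{u_n\}$ is bounded in the Hilbert space $\mathcal{X}^{1,2}_{\mathcal{D}}(U)$, so up to a (not relabelled) subsequence there exists $u \in \mathcal{X}^{1,2}_{\mathcal{D}}(U)$ with $u_n \weakly u$ weakly in $\mathcal{X}^{1,2}_{\mathcal{D}}(U)$. By the compact embedding of Remark \ref{r2.5} we also obtain $u_n \to u$ strongly in $L^r_{\loc}(\R^N)$ for every $r \in [1,2^*)$, and hence $u_n \to u$ a.e.\ in $\R^N$.

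The monotonicity furnished by Lemma \ref{u_n incre} plays a double role here. First, $u_n(x) \le u_{n+1}(x)$ a.e.\ ensures that the \emph{entire} sequence converges to $u$ pointwise a.e., with $u_n \nearrow u$. Second, for every open $\omega \Subset \Omega$ there is a constant $\beta(\omega) > 0$, independent of $n$, such that $u_n(x) \ge \beta(\omega)$ for a.e.\ $x \in \omega$; consequently the limit $u$ satisfies $u(x) \ge \beta(\omega) > 0$ a.e.\ on $\omega$. In particular, $u > 0$ a.e.\ in $\Omega$, which is part of the requirement in Definition \ref{d1}.

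It remains to pass to the limit in the weak formulation
\[
    \int_{\Omega} \nabla u_n \cdot \nabla \varphi\,dx + \int_Q \frac{(u_n(x)-u_n(y))(\varphi(x)-\varphi(y))}{|x-y|^{N+2s}}\,dx\,dy = \int_\Omega \frac{\varphi}{(u_n+\tfrac{1}{n})^q}\,dx,
\]
for arbitrary $\varphi \in C^\infty_0(\Omega)$. The left-hand side converges to the analogous expression for $u$ by weak convergence in $\mathcal{X}^{1,2}_{\mathcal{D}}(U)$, since each bilinear pairing against the fixed $\varphi$ is a continuous linear functional on this Hilbert space. The main (and only really delicate) obstacle is the right-hand side, where the singular term must be controlled uniformly. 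Here the strict positivity is decisive: fixing $\varphi \in C^\infty_0(\Omega)$ and setting $K := \operatorname{supp}(\varphi) \Subset \Omega$, the bound $u_n \ge \beta(K) > 0$ a.e.\ on $K$ gives the uniform domination
\[
    \left|\frac{\varphi(x)}{(u_n(x)+\tfrac{1}{n})^q}\right| \leq \frac{\|\varphi\|_{L^\infty(\Omega)}}{\beta(K)^q}\, \mathbf{1}_K(x) \in L^1(\Omega).
\]
Combined with the pointwise convergence $u_n(x) + \tfrac{1}{n} \to u(x)$ a.e.\ on $K$, Lebesgue's Dominated Convergence Theorem yields $\int_\Omega (u_n + \tfrac{1}{n})^{-q}\varphi\,dx \to \int_\Omega u^{-q}\varphi\,dx$. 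Thus $u$ satisfies \eqref{dd2.2}, so $u$ is a weak solution of \eqref{P_q} in the sense of Definition \ref{d1}, finishing the proof.
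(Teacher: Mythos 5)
Your proof is correct and follows essentially the same route as the paper's: uniform boundedness from Lemma \ref{lem4.3}, weak convergence and a.e.\ convergence via the compact embedding, the uniform interior lower bound from Lemma \ref{u_n incre} to dominate the singular term on the support of each test function, and Lebesgue's Dominated Convergence Theorem to pass to the limit on the right-hand side. The only small addition is your observation that monotonicity upgrades the subsequential a.e.\ limit to convergence of the full sequence, which is a harmless refinement of the paper's argument.
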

\begin{proof}
From the previous Lemma \ref{lem4.3}, we have seen $\{u_n\}$ is uniformly bounded in the Hilbert space $\mathcal{X}^{1,2}_{\mathcal{D}}(U)$. Then, up to a subsequence, there exists $u$ such that $u_n\rightharpoonup u$ weakly convergent in $\mathcal{X}^{1,2}_{\mathcal{D}}(U)$ and $u_n\rightarrow u$ in $L^r_{loc}(\R^N)$ for $1\leq r< 2^*$ and $u_n\rightarrow u$ a.e. in $\R^N$. Moreover since $\{u_n\}$ is an increasing sequence, so $u \geq u_n$ a.e. in $\mathbb R^N$ for each $n$ and
\eqref{increseq}  satisfies for $u$ too. Therefore, by the weak convergence, we can  pass  the limit as $n\to \infty$  to get
$$
\begin{aligned}
 &\lim _{n \rightarrow \infty}\left(\int_{\Omega}\nabla u_n(x).\nabla \varphi(x)\,dx +\int_{Q} \frac{\left(u_n(x)-u_n(y)\right)(\varphi(x)-\varphi(y))}{|x-y|^{N+2 s}} d x d y\right)\\
 &\qquad\qquad\qquad\qquad=\int_{\Omega}\nabla u(x).\nabla \varphi(x)\,dx
\quad+\int_{Q} \frac{\left(u(x)-u(y)\right)(\varphi(x)-\varphi(y))}{|x-y|^{N+2 s}} d x d y, {~\forall~\varphi\in C^{\infty}_0(\Omega)}. 
   \end{aligned}
  $$
 Furthermore, by Lemma \eqref{u_n incre}, we have for all $\omega \Subset \Omega$, there exists a $\beta>0$ such that $u(x)\geq \beta>0,$ for a.e. $x\in \omega.$
Then, on the right hand side of \eqref{P_nq}, we observe that for every $\forall~\varphi\in C^{\infty}_0(\Omega)$ with $\operatorname{supp}(\varphi)=\omega \Subset  \Omega$,  we have
$$
0 \leq\left|\frac{ \varphi}{\left(u_n+\frac{1}{n}\right)^q}\right| \leq \frac{||\varphi||_{L^{\infty}(\Omega)}}{\beta^q} \in L^1(\Omega) .
$$
Hence, using the Lebesgue Dominated Convergence Theorem, we have
$$
\lim _{n \rightarrow \infty} \int_{\Omega} \frac{ \varphi}{\left(u_n+\frac{1}{n}\right)^q}=\int_{\Omega} \frac{ \varphi}{u^q}.
$$
This proves the existence of the solution to \eqref{P_q}.
    \end{proof}

\subsubsection{\texorpdfstring{Case: $q \in (1,\infty)$}{}}
   
We deal with the highly singular case of the problem $(P_q)$ in this subsection. We first recall the following algebraic inequality, as stated in [\cite{Abdellaoui}, Lemma 2.22].
\begin{lemma}\label{algineq}
\begin{enumerate}[(1)]
    \item Let $\alpha > 0$ then for every $x, y \geq 0$ one has
    \[
    (x - y)(x^\alpha - y^\alpha) \geq \frac{4\alpha}{(\alpha + 1)^2} \left( x^{\frac{\alpha + 1}{2}} - y^{\frac{\alpha + 1}{2}} \right)^2.
    \]
\item Let $\alpha \geq 1$ then there exists a constant $C_\alpha>0$ depending only on $\alpha$ such that
    \[
    \left| x + y \right|^{\alpha - 1} \left| x - y \right| \leq C_\alpha \left| x^\alpha - y^\alpha \right|.
    \]
\end{enumerate}
\end{lemma}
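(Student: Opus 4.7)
The plan is to reduce both inequalities to the case $x \geq y \geq 0$ by symmetry; this is immediate, since each inequality is invariant under swapping $x$ and $y$. For (1) the hypothesis already forces $x,y \geq 0$, and for (2) the same must be assumed implicitly in order that $x^\alpha,y^\alpha$ be defined for non-integer $\alpha$. Both inequalities are trivial at $x=y$, so I would work with $x > y \geq 0$.

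For part (1), the first step is to express the difference as an integral,
\[
x^{(\alpha+1)/2} - y^{(\alpha+1)/2} \;=\; \frac{\alpha+1}{2}\int_y^x t^{(\alpha-1)/2}\,dt,
\]
and then to apply the Cauchy--Schwarz inequality against the constant function $1$ on $[y,x]$:
\[
\left(\int_y^x t^{(\alpha-1)/2}\,dt\right)^{\!2} \;\leq\; (x-y)\int_y^x t^{\alpha-1}\,dt \;=\; (x-y)\,\frac{x^\alpha - y^\alpha}{\alpha}.
\]
Squaring the first identity and substituting the Cauchy--Schwarz bound gives exactly
\[
(x-y)(x^\alpha - y^\alpha) \;\geq\; \frac{4\alpha}{(\alpha+1)^2}\,\bigl(x^{(\alpha+1)/2}-y^{(\alpha+1)/2}\bigr)^2,
\]
which is the claim.

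For part (2), the plan is again to start from the integral representation
\[
x^\alpha - y^\alpha \;=\; \alpha\int_y^x t^{\alpha-1}\,dt.
\]
Since $\alpha-1 \geq 0$, the integrand $t\mapsto t^{\alpha-1}$ is nondecreasing on $[y,x]$. Restricting the integration to the upper half $\left[\tfrac{x+y}{2},\,x\right]$ (which has length $\tfrac{x-y}{2}$) and bounding $t^{\alpha-1}$ from below by $\bigl(\tfrac{x+y}{2}\bigr)^{\alpha-1}$ on that sub-interval, I obtain
\[
x^\alpha - y^\alpha \;\geq\; \alpha\left(\frac{x+y}{2}\right)^{\!\alpha-1}\!\cdot\frac{x-y}{2} \;=\; \frac{\alpha}{2^{\alpha}}\,(x+y)^{\alpha-1}(x-y),
\]
so the conclusion holds with the explicit constant $C_\alpha = 2^\alpha/\alpha$.

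Neither step presents a real obstacle; the only points that require a moment of care are the integrability of $t^{(\alpha-1)/2}$ near $0$ in part (1) when $y=0$ and $\alpha<1$ (which holds since $\alpha>-1$), and checking that the constant $C_\alpha$ in part (2) depends only on $\alpha$ (which is manifest from the explicit formula $2^\alpha/\alpha$). For the intended PDE applications only the existence of such constants is needed, not their sharpness, so this elementary integral/Cauchy--Schwarz approach suffices.
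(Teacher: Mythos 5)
Your argument is correct. Both parts are sound: the integral-representation-plus-Cauchy--Schwarz device in part~(1) yields the exact constant $\tfrac{4\alpha}{(\alpha+1)^2}$, and restricting the integral to the upper half $[\tfrac{x+y}{2},x]$ of $[y,x]$ and using monotonicity of $t\mapsto t^{\alpha-1}$ (valid since $\alpha\geq1$) produces a valid explicit constant $C_\alpha=2^\alpha/\alpha$ in part~(2). You also correctly flag the two small points worth checking: integrability of $t^{(\alpha-1)/2}$ at $0$ when $y=0$ (fine, since $(\alpha-1)/2>-1$), and the implicit restriction $x,y\geq0$ in part~(2) (forced by non-integer $\alpha$, and exactly what the paper's application in Lemma~3.5 requires, since the $u_n$ are nonnegative).

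Note, however, that the paper does not supply a proof of this lemma at all; it simply recalls it with a citation to Lemma~2.22 of Abdellaoui--Medina--Peral--Primo. So there is no in-paper argument to compare against. Your elementary, self-contained proof is an acceptable substitute for the citation, with the minor caveat that you should state the standing hypothesis $x,y\geq0$ explicitly for part~(2) rather than leaving it ``implicit,'' since the lemma's absolute-value notation otherwise suggests all real arguments.
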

\begin{lemma}\label{lem3.5}
Suppose $q>1$ then  $\left\{ u_n^{\frac{q+1}{2}} \right\}$ and  $\{u_n\}$ are uniformly bounded in $\mathcal{X}^{1,2}_{\mathcal{D}}(U)$ and $\mathcal{X}^{1,2}_{\mathcal{D},{loc}}(U)$ respectively, where $\{u_n\}$ denotes solution of \eqref{P_nq},.
    \end{lemma}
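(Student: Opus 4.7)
The plan is to test the equation \eqref{P_nq} with $\varphi = u_n^q$. This is an admissible test function in $\mathcal{X}^{1,2}_{\mathcal{D}}(U)$: since $u_n \in \mathcal{X}^{1,2}_{\mathcal{D}}(U)\cap L^{\infty}(U)$ vanishes on $U^c$, so does $u_n^q$; and $\nabla u_n^q = q u_n^{q-1}\nabla u_n \in L^2$ because $u_n^{q-1}$ is bounded. The right-hand side is then immediately controlled:
\begin{equation*}
\int_{\Omega}\frac{u_n^q}{(u_n+\frac{1}{n})^q}\,dx \le \int_{\Omega} 1\,dx = |\Omega|.
\end{equation*}

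For the left-hand side, the chain rule gives $\nabla u_n \cdot \nabla u_n^q = q u_n^{q-1}|\nabla u_n|^2 = \frac{4q}{(q+1)^2}|\nabla u_n^{(q+1)/2}|^2$, while Lemma \ref{algineq}(1) with $\alpha = q$ yields
\begin{equation*}
(u_n(x)-u_n(y))(u_n^q(x)-u_n^q(y)) \ge \frac{4q}{(q+1)^2}\bigl(u_n^{(q+1)/2}(x)-u_n^{(q+1)/2}(y)\bigr)^2.
\end{equation*}
Combining these two lower bounds with the upper bound above produces the uniform-in-$n$ estimate $\eta(u_n^{(q+1)/2})^2 \le \frac{(q+1)^2}{4q}|\Omega|$, which establishes the first assertion.

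For the second assertion, fix an arbitrary compact subset $K \Subset \Omega$. By Lemma \ref{u_n incre}(2), there exists $\beta_K > 0$, independent of $n$, with $u_n \ge \beta_K$ a.e.\ on $K$. For the local Dirichlet energy I invert the chain rule to write
\begin{equation*}
|\nabla u_n|^2 = \frac{4}{(q+1)^2}\, u_n^{1-q}\,|\nabla u_n^{(q+1)/2}|^2 \le \frac{4\,\beta_K^{1-q}}{(q+1)^2}\bigl|\nabla u_n^{(q+1)/2}\bigr|^2 \quad \text{on } K,
\end{equation*}
since $1-q < 0$ and $u_n \ge \beta_K$ on $K$. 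Integrating over $K$ and using the first part supplies the uniform $L^2(K)$-bound on $\nabla u_n$. For the local Gagliardo seminorm $[u_n]_{s,K}$, I apply Lemma \ref{algineq}(2) with $\alpha = (q+1)/2 \ge 1$ (valid since $q > 1$) to $(u_n(x),u_n(y))$, square the resulting inequality, and exploit $u_n(x)+u_n(y) \ge 2\beta_K$ on $K \times K$ to obtain
\begin{equation*}
|u_n(x)-u_n(y)|^2 \le \frac{C_\alpha^2}{(2\beta_K)^{q-1}}\bigl|u_n^{(q+1)/2}(x)-u_n^{(q+1)/2}(y)\bigr|^2,
\end{equation*}
so that dividing by $|x-y|^{N+2s}$ and integrating gives $[u_n]_{s,K}^2 \le C(\beta_K,q)\,\eta(u_n^{(q+1)/2})^2$, again uniformly bounded.

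The crux of the argument is the pointwise strict positivity of $u_n$ on compact subsets of $\Omega$ from Lemma \ref{u_n incre}; without it the factors $u_n^{1-q}$ and $(u_n(x)+u_n(y))^{1-q}$ with $q>1$ would be uncontrollable, which is precisely why the $u_n$-bound is only local in $\Omega$, whereas the power $u_n^{(q+1)/2}$ is globally controlled. The only other delicate point is confirming admissibility of the test function $u_n^q$, which is automatic given the $L^\infty$-regularity furnished by Lemma \ref{lem4.1}.
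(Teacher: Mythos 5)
Your proof is correct and follows essentially the same route as the paper: test with $u_n^q$, use the chain rule together with Lemma \ref{algineq}(1) to obtain the uniform bound on $\eta(u_n^{\frac{q+1}{2}})$, then combine the local lower bound $u_n\ge\beta_K$ from Lemma \ref{u_n incre}(2) with Lemma \ref{algineq}(2) to transfer the estimate to $u_n$ on compact subsets. Your treatment of the local Gagliardo seminorm (applying part (2) with $\alpha=\frac{q+1}{2}$ and using only $u_n(x)+u_n(y)\ge 2\beta_K$) is in fact slightly cleaner than the paper's, which routes through $\alpha=q$ and invokes an $L^\infty$ bound that is not needed.
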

\begin{proof}
From Lemma \ref{lem4.1}, we have $u_n\in L^{\infty}(U)$. Utilizing [\cite{Sharma}, Proposition 2.7] and thus choosing $u_n^q\in \mathcal{X}^{1,2}_{\mathcal{D}}(U) $ as a test function in \eqref{P_nq}, we have
\begin{equation}\label{eq316}
\int_{\Omega} \nabla u_n(x)\cdot \nabla u_n^q\,dx+\int_{Q }\frac{\left(u_n(x)-u_n(y)\right)\left(u_n^q(x)-u_n^q(y)\right)}{|x-y|^{N+2 s}} d x d y =\int_{\Omega} \frac{ u_n^q}{\left(u_n+\frac{1}{n}\right)^q}\,dx.
    \end{equation}
We observe that
\begin{equation}\label{eq317}
\int_{\Omega} \nabla u_n\cdot \nabla u_n^q\,dx=\int_{\Omega}\frac{4q }{(q+1)^2}\left|\nabla u_n^{\frac{q+1}{2}}\right|^2\,dx.    
\end{equation}
Using the assertion $(1)$ of Lemma \ref{algineq}, we can see
\begin{equation}\label{eq318}
\int_{Q }\frac{\left(u_n(x)-u_n(y)\right)\left(u_n^q(x)-u_n^q(y)\right)}{|x-y|^{N+2 s}} d x d y \geq \frac{4q }{(q+1)^2} 
\int_{Q }\frac{\left(u_n^{\frac{q+1}{2}}(x)-u_n^{\frac{q+1}{2}}(y)\right)^2}{|x-y|^{N+2 s}} d x d y.   
\end{equation}
Combining \eqref{eq316},\eqref{eq317}, and \eqref{eq318}, we obtain that
$$
\begin{aligned}
    \frac{4q }{(q+1)^2} \left[\int_{\Omega}\left|\nabla u_n^{\frac{q+1}{2}}\right|^2\,dx+ 
\int_{Q }\frac{\left(u_n^{\frac{q+1}{2}}(x)-u_n^{\frac{q+1}{2}}(y)\right)^2}{|x-y|^{N+2 s}} d x d y\right]&\leq \int_{\Omega} \frac{ u_n^q}{\left(u_n+\frac{1}{n}\right)^q}\,dx\leq \int_{\Omega} 1 \,dx,
\end{aligned}
$$ that is 
\begin{equation}\label{eq3019}
\int_{\Omega}\left|\nabla u_n^{\frac{q+1}{2}}\right|^2\,dx+ 
\int_{Q }\frac{\left(u_n^{\frac{q+1}{2}}(x)-u_n^{\frac{q+1}{2}}(y)\right)^2}{|x-y|^{N+2 s}} d x d y\leq \frac{(q+1)^2}{4q} |\Omega|= C,    
\end{equation}
 where $C=C(q, |\Omega|)$ is a positive constant.
Thus,  $\eta\left(u_n^{\frac{q+1}{2}}\right)\leq C$ which implies that the sequence $\left\{u_n^{\frac{q+1}{2}}\right\}$ is uniformly bounded in $\mathcal{X}^{1,2}_{\mathcal{D}}(U).$
Moreover, since $q>1$ and $U$ is bounded, $\{u_n\}$ is uniformly bounded in $L^{q+1}_{loc}(\R^N)$. In particular, $\{u_n\}$ is uniformly bounded in $L^{q+1}(\omega)$, for every $\omega\Subset \Omega$. 

Now, we aim to show that sequence $\{u_n\}$ is uniformly bounded in $\mathcal{X}^{1,2}_{\mathcal{D},{loc}}(U).$ Suppose $\omega \Subset \Omega $. Thanks to second assertion of Lemma \ref{u_n incre} then we have  $u_n\geq \beta>0$ in $\omega$, for each $n\in\N,$ where $\beta$ depends on $\omega$ and independent of $n$. So using \eqref{eq3019}, we have
\begin{equation}\label{eq320}
\int_{\omega} |\nabla u_n|^2\,dx= \frac{4}{(q+1)^2}\int_{\omega} u_n^{1-q} \left|\nabla u_n^{\frac{q+1}{2}}\right|^2\,dx\leq  \frac{4}{(q+1)^2} \beta ^{q-1} C_1, 
    \end{equation}
where $C_1(q, \omega)>0$ is a constant. Furthermore, as $K'=\omega \times \omega \subset \Omega \times \Omega \subset Q$  then from \eqref{eq318} and \eqref{eq3019}, we may deduce that
\begin{equation}\label{eq3k}
\int_{K'} \frac{(u_n(x) - u_n(y))(u_n^q(x) - u_n^q(y))}{|x - y|^{N + 2s}} \, dx \, dy \leq C_2
    \end{equation}
 for some $C_2>0$. We now apply the second assertion of Lemma \ref{algineq}, to get
\begin{equation}\label{eq3022}
 \int_{K'} \frac{|u_n(x) - u_n(y)|^2 \left| u_n(x) + u_n(y) \right|^{q - 1}}{|x - y|^{N + 2s}} \, dx \, dy \leq C_2.
    \end{equation}
As we know that $\{u_n\}\subset L^\infty(U)$ is uniformly bounded, from \eqref{eq320}, \eqref{eq3022} one now gets
\[
 \int_{K'} \frac{|u_n(x) - u_n(y)|^2}{|x - y|^{N + 2s}} \, dx \, dy \leq \frac{M^{q - 1} C_2}{\beta^{1- q}} \quad \text{and} \quad \int_\omega |\nabla u_n|^2 \, dx  \leq \frac{C_3}{{\beta}^{1 - q}}, ~\text{where}~ M, C_2, C_3>0. \tag{4.6}
\]
Thus, we  conclude that $\{u_n\}$ is uniformly bounded in $\mathcal{X}^{1,2}_{\mathcal{D},{loc}}(U).$ 
\end{proof}
We now focus on the existence result for $q>1.$
\begin{theorem}\label{thm3.8}
Suppose $q>1$ then the problem \eqref{P_q} admits a weak solution $u\in \mathcal{X}^{1,2}_{\mathcal{D},{loc}}(U)$ such that $u^{\frac{q+1}{2}} \in \mathcal{X}^{1,2}_{\mathcal{D}}(U).$
\end{theorem}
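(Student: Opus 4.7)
The plan is to follow the same strategy used in Theorem \ref{existence}: extract a weak limit from the monotone sequence $\{u_n\}$ of approximate solutions furnished by Lemma \ref{lem4.1}, identify the limit, and pass to the limit in the weak formulation of \eqref{P_nq} with a test function compactly supported in $\Omega$. The key structural input that is new in the range $q>1$ is Lemma \ref{lem3.5}, which provides a uniform bound on $\{u_n^{(q+1)/2}\}$ in $\mathcal{X}^{1,2}_{\mathcal{D}}(U)$ (rather than a bound on $\{u_n\}$ globally), and a uniform bound on $\{u_n\}$ only in $\mathcal{X}^{1,2}_{\mathcal{D},\loc}(U)$.

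First, by reflexivity and the compact embedding of Remark \ref{r2.5}, there exists $w\in \mathcal{X}^{1,2}_{\mathcal{D}}(U)$ such that (up to a subsequence) $u_n^{(q+1)/2}\weakly w$ weakly in $\mathcal{X}^{1,2}_{\mathcal{D}}(U)$, strongly in $L^r_{\loc}(\R^N)$ for $r\in[1,2^*)$, and pointwise a.e.\ in $\R^N$. Since $\{u_n\}$ is monotone nondecreasing (Lemma \ref{u_n incre}), it converges a.e.\ to a limit $u$, and the uniqueness of the pointwise limit forces $w=u^{(q+1)/2}$ a.e., so $u^{(q+1)/2}\in \mathcal{X}^{1,2}_{\mathcal{D}}(U)$. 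Using the local bound from Lemma \ref{lem3.5} and the same compactness argument on every $\omega\Subset\Omega$, one extracts (via a diagonal subsequence) a further subsequence so that $u_n\weakly u$ in $\mathcal{X}^{1,2}_{\mathcal{D},\loc}(U)$, yielding $u\in\mathcal{X}^{1,2}_{\mathcal{D},\loc}(U)$. The strict positivity on compact subsets ($u\ge u_1\ge \beta(\omega)>0$ on $\omega\Subset\Omega$) follows directly from the monotonicity and Lemma \ref{u_n incre}.

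Next, fix $\varphi\in C_0^\infty(\Omega)$ with $\operatorname{supp}\varphi=\omega\Subset\Omega$, and test \eqref{P_nq} against $\varphi$. The local term $\int_\Omega \nabla u_n\cdot\nabla\varphi\,dx$ converges to $\int_\Omega \nabla u\cdot\nabla\varphi\,dx$ by the local weak convergence, since $\nabla\varphi$ is supported in $\omega$. The singular right-hand side converges by dominated convergence: the estimate
$$
0\le \frac{|\varphi(x)|}{(u_n(x)+1/n)^q}\le \frac{\|\varphi\|_{L^\infty(\Omega)}}{\beta(\omega)^q}\in L^1(\omega),
$$
together with pointwise convergence of $u_n\to u$ a.e., gives $\int_\Omega \varphi/(u_n+1/n)^q\,dx\to \int_\Omega \varphi/u^q\,dx$.

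The main obstacle is the passage to the limit in the nonlocal bilinear form on $Q=\R^{2N}\setminus(\Omega^c\times\Omega^c)$, because $\{u_n\}$ is only locally uniformly bounded in $\mathcal{X}^{1,2}_{\mathcal{D}}$. This is handled by exploiting the compact support of $\varphi$: since $\varphi(x)-\varphi(y)=0$ whenever both $x,y\in\omega^c$, I would split the integration domain as
$$
Q=\bigl[\omega_1\times\omega_1\bigr]\;\cup\;\bigl[(\omega_1\times\omega_1^c)\cup(\omega_1^c\times\omega_1)\bigr]
$$
for a fixed open set $\omega_1$ with $\omega\Subset\omega_1\Subset\Omega$. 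On $\omega_1\times\omega_1$ the weak convergence of $u_n\weakly u$ in the Gagliardo seminorm $[\,\cdot\,]_{s,\omega_1}$ (coming from the bound in $\mathcal{X}^{1,2}_{\mathcal{D},\loc}(U)$) is enough to pass to the limit. On the remaining region $\omega\times\omega_1^c$ (and its symmetric counterpart), we have $\varphi(y)=0$ and the kernel $|x-y|^{-N-2s}$ is uniformly bounded away from singularity by $\operatorname{dist}(\omega,\partial\omega_1)^{-N-2s}$; combining this with the global $L^2(U)$ bound on $u_n$ inherited from the Sobolev embedding $\mathcal{X}^{1,2}_{\mathcal{D}}(U)\hookrightarrow L^{2^*}(U)$ applied to $u_n^{(q+1)/2}$ (which yields $u_n\in L^{2^*(q+1)/2}(U)$ uniformly), another application of dominated convergence closes the argument. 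This produces the required weak identity for every $\varphi\in C_0^\infty(\Omega)$, establishing that $u$ is a weak solution of \eqref{P_q} in the sense of Definition \ref{d1}.
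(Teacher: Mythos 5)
Your proof is correct and follows the paper's high-level strategy: extract the monotone a.e.\ limit $u$ of $\{u_n\}$, use Lemma \ref{lem3.5} to place $u^{\frac{q+1}{2}}$ in $\mathcal{X}^{1,2}_{\mathcal D}(U)$ and $u$ in $\mathcal{X}^{1,2}_{\mathcal D,\loc}(U)$, and pass to the limit in the weak formulation against a fixed $\varphi\in C_0^\infty(\Omega)$, with the uniform lower bound $u_n\ge\beta(\omega)>0$ on $\operatorname{supp}\varphi$ controlling the singular right-hand side. Where you diverge is the nonlocal bilinear form. The paper estimates the difference of the two bilinear forms, cites Montoro et al.\ to produce a compact $\mathcal K\subset Q$ off which the tail is below $\epsilon/2$, and then applies Vitali's theorem on $\mathcal K$ (uniform integrability via Cauchy--Schwarz and $[\varphi]_{s}$). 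You instead make the underlying localization explicit: up to $(\omega_1^c\times\omega_1^c)\cap Q$, on which $\varphi(x)-\varphi(y)\equiv 0$, you split into the core $\omega_1\times\omega_1$ — where you treat $v\mapsto\int_{\omega_1\times\omega_1}(v(x)-v(y))(\varphi(x)-\varphi(y))|x-y|^{-N-2s}\,dx\,dy$ as a bounded linear functional on the local Gagliardo space and invoke the weak convergence $u_n\rightharpoonup u$ directly — and the far-field $\omega\times\omega_1^c$ (and its symmetric counterpart), where the kernel is bounded by $\operatorname{dist}(\omega,\partial\omega_1)^{-N-2s}$ and the uniform $L^1(U)$ control on $u_n$ (via $u_n\le u$ and $u^{\frac{q+1}{2}}\in L^{2^*}(U)$) lets dominated or monotone convergence conclude. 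Both arguments are valid; yours is more self-contained and replaces Vitali with a cleaner weak-convergence step, at the cost of spelling out the decomposition that the paper delegates to the citation.
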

\begin{proof}
    From Lemma \eqref{lem3.5}(see \eqref{eq3019}), we know that the sequence  
$
\left\{ u_n^{\frac{q + 1}{2}}\right\}
$
is uniformly bounded in Hilbert space $\mathcal{X}^{1,2}_{\mathcal{D}}(U)$, that is for some $C>0$
\begin{equation}
 \operatorname{sup}_{n\in \N}\left[\eta\left( u_n^{\frac{q + 1}{2}}\right)\right]\leq C.
     \end{equation}
By the weak convergence definition, it is easy to see that
     $u^{\frac{q+1}{2}} \in \mathcal{X}^{1,2}_{\mathcal{D}}(U)$ and also $u\in L^2_{loc}(\R^N)$, since $q>1.$
Since by Lemma \ref{lem3.5}, $\{u_n\}$ is uniformly bounded in $\mathcal{X}^{1,2}_{\mathcal{D},{loc}}(U)$ and an increasing sequence for each $n\in \N$, it admits a weak limit  $u>0$ a.e. in $\Omega$ (over any compact subset of $\Omega$) in $\mathcal{X}^{1,2}_{\mathcal{D},{loc}}(U)$ as $n\to \infty$, see Lemma \ref{u_n incre}, which shall be pointwise limit as well. 
Hence,  following the proof of Theorem \ref{existence}, for any  $\varphi \in C^{\infty}_0(\Omega)$, we obtain
\[
\lim_{n \to \infty} \int_{\Omega} \nabla u_n \nabla \varphi \, dx = \int_{\Omega}  \nabla u \nabla \varphi \, dx  
\]
and for the subsequent term, we obtain the estimate
\begin{equation}\label{eq321}
\begin{aligned}
&\left| \int_{Q}   \frac{\left(u_n(x)- u_n(y)\right) (\varphi(x) - \varphi(y))}{|x-y|^{N+2s}} \, \,dx\,dy - \int_{Q}  \frac{(u(x)-u(y)) (\varphi(x) - \varphi(y))} {|x-y|^{N+2s}} \, \,dx\,dy\right|\\
\quad& \leq  \int_{Q}   \frac{\left|\left(u_n(x)- u(x)\right) - \left(u_n(y)- u(y)\right)\right| \left|\varphi(x) - \varphi(y)\right|}{|x-y|^{N+2s}} \, \,dx\,dy. 
    \end{aligned}
    \end{equation}
    Next, let us fix 
$\epsilon>0$. We claim that there exists a compact set 
$\mathcal{K}\subset Q$ such that
\begin{equation}\label{eq322}
    \int_{Q\setminus \mathcal{K}}   \frac{\left|\left(u_n(x)- u(x)\right) - \left(u_n(y)- u(y)\right)\right| \left|\varphi(x) - \varphi(y)\right|}{|x-y|^{N+2s}} \, \,dx\,dy\leq \frac{\epsilon}{2}.
\end{equation}
To establish \eqref{eq322}, we follow the arguments of  [\cite{Montoro}, Theorem 3.6].  On the other hand, for any arbitrary measurable subset $\mathcal{E}\subset\mathcal{K}$, we obtain the inequality
$$\int_{ \mathcal{E}}   \frac{\left|\left(u_n(x)- u(x)\right) - \left(u_n(y)- u(y)\right)\right| \left|\varphi(x) - \varphi(y)\right|}{|x-y|^{N+2s}} \, \,dx\,dy \leq C \left(\int_{\mathcal{E}} \frac{|\varphi(x)-\varphi(y)|^2}{|x-y|^{N+2s}}\,dx\,dy\right)^{\frac{1}{2}}.$$
If the measure $|\mathcal{E}|\to 0$ then 
$$\int_{ \mathcal{E}}   \frac{\left|\left(u_n(x)- u(x)\right) - \left(u_n(y)- u(y)\right)\right| \left|\varphi(x) - \varphi(y)\right|}{|x-y|^{N+2s}} \, \,dx\,dy \rightarrow 0,~~~\text{uniformly on $n$}.$$
Moreover, 
$$\frac{\left|\left(u_n(x)- u(x)\right) - \left(u_n(y)- u(y)\right)\right| \left|\varphi(x) - \varphi(y)\right|}{|x-y|^{N+2s}} \to 0~\text{a.e. in $Q$}.$$
 Now, using the Vitali convergence Theorem, for given $\epsilon>0,$ there exists $N_0>0$ such that, if $n\geq N_0$ then
\begin{equation}\label{eq323}
     \int_{ \mathcal{K}}   \frac{\left|\left(u_n(x)- u(x)\right) - \left(u_n(y)- u(y)\right)\right| \left|\varphi(x) - \varphi(y)\right|}{|x-y|^{N+2s}} \, \,dx\,dy\leq \frac{\epsilon}{2}
\end{equation}
Thus, taking the limit as 
$n\to \infty$ on both sides of \eqref{eq321} and applying \eqref{eq322} and \eqref{eq323}, we obtain
$$\lim_{n\to \infty} \int_{{Q}}   \frac{\left(u_n(x)- u_n(y)\right) (\varphi(x) - \varphi(y))}{|x-y|^{N+2s}} \, \,dx\,dy = \int_{{Q}}  \frac{(u(x)-u(y)) (\varphi(x) - \varphi(y))} {|x-y|^{N+2s}} \, \,dx\,dy,~~\forall~~\varphi\in {C^{\infty}_0(\Omega)}.$$
 Also, by Lemma \eqref{u_n incre}, we have for all $\omega \subset \subset \Omega$ there exists $\beta>0$ such that $u(x)\geq \beta>0,$ for $x\in \omega.$
Finally following the same arguments as in Theorem \ref{existence} for handling R.H.S. of \eqref{P_nq}, we establish the existence of the weak solution to \eqref{P_q}.
\end{proof}

\subsection{Regularity and Uniqueness}
 Initially, we will establish the following result, which enables us to choose a test function from the space $\mathcal{X}^{1,2}_{\mathcal{D}}(U)$ 
in  \eqref{dd2.2}.
\begin{Proposition}\label{prop3.1}
   Let $q>0$ and $u \in \mathcal{X}^{1,2}_{\mathcal{D}}(U)$ be a weak solution to the problem \eqref{P_q}, then \eqref{dd2.2} satisfies for every $\varphi \in \mathcal{X}^{1,2}_{\mathcal{D}}(U).$ 
\end{Proposition}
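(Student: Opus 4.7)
The plan is to pass to the limit in the regularized problem $(P_{n,q})$ from Lemma \ref{lem4.1} and then extend the admissible test-function class from $C^\infty_0(\Omega)$ to all of $\mathcal{X}^{1,2}_{\mathcal{D}}(U)$ by a density argument. The pivotal step will be recognizing that the singular map $\varphi\mapsto \int_\Omega u^{-q}\varphi\,dx$ is a continuous linear functional on $\mathcal{X}^{1,2}_{\mathcal{D}}(U)$. I would begin by decomposing $\varphi=\varphi^+-\varphi^-$; the identity $(\varphi^+(x)-\varphi^+(y))(\varphi^-(x)-\varphi^-(y))=-\varphi^+(x)\varphi^-(y)-\varphi^+(y)\varphi^-(x)\le 0$ gives $[\varphi^\pm]_s\le [\varphi]_s$, and together with $\nabla\varphi^\pm=\nabla\varphi\,\chi_{\{\pm\varphi>0\}}$ this ensures $\varphi^\pm\in\mathcal{X}^{1,2}_{\mathcal{D}}(U)$ with $\eta(\varphi^\pm)\le\eta(\varphi)$, so it suffices to treat $\varphi\ge 0$.

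Since each $u_n$ is constructed in Lemma \ref{lem4.1} as a Schauder fixed point on the entire space $\mathcal{X}^{1,2}_{\mathcal{D}}(U)$ and $(u_n+\tfrac{1}{n})^{-q}\in L^\infty(\Omega)$, the weak formulation for $u_n$ holds for every test function in $\mathcal{X}^{1,2}_{\mathcal{D}}(U)$:
\[
\langle u_n,\phi\rangle=\int_\Omega \frac{\phi}{(u_n+1/n)^q}\,dx,\qquad \forall\,\phi\in\mathcal{X}^{1,2}_{\mathcal{D}}(U).
\]
I would then test with $\phi=\varphi$ and pass $n\to\infty$. The left-hand side converges to $\langle u,\varphi\rangle$ by the weak convergence $u_n\rightharpoonup u$ in $\mathcal{X}^{1,2}_{\mathcal{D}}(U)$ obtained in Theorems \ref{existence} and \ref{thm3.8}. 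On the right-hand side, Lemma \ref{u_n incre} gives $u_n\nearrow u$ a.e.\ with $u>0$ a.e.\ in $\Omega$, so the integrand converges pointwise, and Fatou's lemma yields the fundamental bound
\[
\int_\Omega \varphi u^{-q}\,dx\le \liminf_n\int_\Omega \frac{\varphi}{(u_n+1/n)^q}\,dx=\langle u,\varphi\rangle\le \eta(u)\,\eta(\varphi)<+\infty.
\]
In particular, $\varphi u^{-q}\in L^1(\Omega)$, and by the decomposition above the singular term is continuous in $\varphi$ with respect to $\eta$.

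For the identity itself I would first establish it for $\varphi\in C^\infty_0(U)$. Since $\operatorname{supp}\varphi\cap\overline{\Omega}$ then stays away from the Dirichlet part $\partial\Omega\cap\overline{\mathcal{D}}$, I expect a uniform-in-$n$ lower bound $u_n\ge c_\varphi>0$ on $\operatorname{supp}\varphi\cap\Omega$ to be available, by combining the interior positivity $u_n\ge u_1\ge\beta_\omega$ of Lemma \ref{u_n incre} with a maximum-principle or Harnack-type argument at $\partial\Omega\cap\overline{\mathcal{N}}$. Such a bound produces a uniform $L^1$ dominant $\varphi/c_\varphi^q$, and the dominated convergence theorem upgrades the Fatou inequality above to equality, giving \eqref{dd2.2} for every $\varphi\in C^\infty_0(U)$. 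Density of $C^\infty_0(U)$ in $\mathcal{X}^{1,2}_{\mathcal{D}}(U)$ (Corollary following Proposition \ref{P}), together with the $\eta$-continuity of the bilinear form on the left-hand side and of the singular functional on the right, then extends the identity to arbitrary $\varphi\in\mathcal{X}^{1,2}_{\mathcal{D}}(U)$.

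The main obstacle I anticipate is precisely the uniform positivity of $u_n$ right up to the Neumann portion $\partial\Omega\cap\overline{\mathcal{N}}$ of the boundary: Lemma \ref{u_n incre} only provides strict positivity on interior compact subsets $\omega\Subset\Omega$, and propagating this bound up to the Neumann boundary requires a Harnack or boundary maximum-principle argument tailored to the mixed local--nonlocal operator $\mathcal{L}$ with Neumann-type data, which is not entirely standard. A possible workaround is to first prove the identity for $\varphi$ supported in $\Omega$ (where $C^\infty_0(\Omega)$ approximation in $H^1_0(\Omega)$ is routine and the Fatou bound above already gives $\varphi u^{-q}\in L^1$), and then handle the contribution of test functions whose support reaches $\partial\Omega\cap\overline{\mathcal{N}}$ separately via the boundary integration-by-parts formula of Proposition \ref{P} together with the Neumann condition $\mathcal{N}_s u=0$ in $\mathcal{N}$ and $\partial_\nu u=0$ on $\partial\Omega\cap\overline{\mathcal{N}}$.
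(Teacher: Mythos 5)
Your first half is sound and, in substance, reproduces the paper's key estimate: testing the regularized equation with $\varphi\ge 0$ and applying Fatou gives $\int_\Omega u^{-q}\varphi\,dx\le\langle u,\varphi\rangle\le\eta(u)\,\eta(\varphi)$, hence $u^{-q}\varphi\in L^1(\Omega)$ and the $\eta$-continuity of the singular functional. The paper reaches the same bound by a different route (it never returns to the approximating problems: it takes $0\le\phi_n\in C^\infty_0(U)$ with $\phi_n\to|\phi|$ strongly and pointwise, applies Fatou to $\int_\Omega u^{-q}\phi_n\,dx$, and uses Cauchy--Schwarz on the weak formulation of the \emph{limit} equation to get $\bigl|\int_\Omega u^{-q}\phi\,dx\bigr|\le C\,\eta(u)\,\eta(\phi)$), but the two estimates are interchangeable.

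The genuine gap is in your second half, and it is a gap you yourself flag: upgrading the Fatou inequality to an equality by dominated convergence requires a uniform-in-$n$ lower bound $u_n\ge c_\varphi>0$ on $\operatorname{supp}\varphi\cap\Omega$ for test functions whose support reaches $\partial\Omega\cap\overline{\mathcal N}$, and Lemma \ref{u_n incre} only provides such a bound on sets $\omega\Subset\Omega$. Neither the Harnack-type boundary argument nor the ``handle the boundary contribution separately via Proposition \ref{P}'' workaround is carried out, so as written the identity is not established for the dense class $C^\infty_0(U)$ and the final density step has nothing to start from. The important point, which the paper's proof makes clear, is that this entire step is avoidable: one never needs equality in any Fatou/DCT passage at the level of the approximations. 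Once the identity is accepted for smooth compactly supported test functions (the paper simply invokes \eqref{eq324} for the approximating sequence $\varphi_n\in C^\infty_0(U)$) and the continuity bound $\int_\Omega u^{-q}|\phi|\,dx\le C\,\eta(u)\,\eta(\phi)$ is in hand, applying that bound to $\phi=\varphi_n-\varphi$ with $\varphi_n\to\varphi$ strongly in $\mathcal{X}^{1,2}_{\mathcal{D}}(U)$ forces $\int_\Omega u^{-q}(\varphi_n-\varphi)\,dx\to 0$, while the bilinear form converges by strong convergence; the identity then passes to the limit with no pointwise domination and no boundary positivity. Since your Fatou step already delivers exactly this bound, you should discard the DCT argument and conclude as the paper does. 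The only residual issue --- that the weak formulation must be known against $C^\infty_0(U)$ rather than merely $C^\infty_0(\Omega)$, since the latter is not dense in $\mathcal{X}^{1,2}_{\mathcal{D}}(U)$ --- is one the paper does not revisit either; it is a matter of how Definition \ref{d1} is read, not something your DCT argument was needed to repair.
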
 
\begin{proof}
Suppose, \( u \in \mathcal{X}^{1,2}_{\mathcal{D}}(U) \) is a solution to the problem \eqref{P_q}. For any test function \( \varphi \in C^{\infty}_0(\Omega) \), we have

\begin{equation}\label{eq324}
\int_{\Omega}\nabla u \cdot \nabla \varphi \, dx + \int_{{Q}}  \frac{(u(x)-u(y)) (\varphi(x) - \varphi(y))} {|x-y|^{N+2s}} \, dx \, dy = \int_{\Omega} \frac{\varphi(x)}{u^q(x)}  \, dx.
\end{equation}
Furthermore, using the density argument, for every \( \phi \in \mathcal{X}^{1,2}_{\mathcal{D}}(U) \), there exists a sequence of functions \( 0 \leq \phi_n \in C^{\infty}_0(U) \) such that \( \phi_n \rightarrow |\phi| \) strongly in \( \mathcal{X}^{1,2}_{\mathcal{D}}(U) \) as \( n \rightarrow \infty \) that is $\eta(\phi_n) \to \eta(|\phi|)$ and also pointwise a.e. in \( \mathbb{R}^n \). Using the definition of lower semicontinuity and the Cauchy-Schwarz inequality, we obtain that
\begin{equation}\label{eq325}
\begin{aligned}
&\left|\int_{\Omega} \frac{1}{u^q(x)} \phi \, dx\right| \leq \int_{\Omega} \frac{1}{u^q(x)} |\phi| \, dx \leq \liminf _{n \rightarrow \infty} \int_{\Omega} \frac{1}{u^q(x)} \phi_n \, dx \\
&= \liminf _{n \rightarrow \infty} \left(\int_{\Omega} \nabla u \cdot \nabla \phi_n \, dx + \int_{Q} \frac{(u(x)-u(y)) (\phi_n(x) - \phi_n(y))} {|x-y|^{N+2s}} \, dx \, dy \right) \\
&\leq C \liminf _{n \rightarrow \infty}  \left( \eta(u)\eta(\phi_n)  \right) \leq C \eta(u) \eta(|\phi|)\leq C \eta(u) \eta(\phi),
\end{aligned}
\end{equation}
where \( C > 0 \) is a constant  independent of \( n \). Now, for \( \varphi \in \mathcal{X}^{1,2}_{\mathcal{D}}(U) \), there exists a sequence \( \varphi_n \in C^{\infty}_0(U) \) that converges to \( \varphi \) strongly in \( \mathcal{X}^{1,2}_{\mathcal{D}}(U) \). Proceeding similarly by taking \( \phi = \varphi_n - \varphi \) in \eqref{eq325}
and passing limit as $n\to \infty$ on both sides, we get
\begin{equation}\label{eq326}
\lim_{n\to \infty} \left|\int_{\Omega} \frac{1}{u^q(x)} \left(\varphi_n - \varphi\right) \, dx\right|\leq C \eta(u) \lim_{n\to \infty} \eta(|\varphi_n-\varphi|)\leq C \eta(u) \lim_{n\to \infty} \eta(\varphi_n-\varphi) =0.
\end{equation}
Again using the fact that \( \varphi_n \rightarrow \varphi \) strongly in \( \mathcal{X}^{1,2}_{\mathcal{D}}(U) \), we obtain
\begin{equation}\label{eq327}
\lim _{n \rightarrow \infty} \left\{ \int_{\Omega} \nabla u \cdot \nabla\left(\varphi_n - \varphi\right) \, dx + \int_{Q} \frac{(u(x)-u(y)) \left(\left(\varphi_n - \varphi\right)(x)-\left(\varphi_n - \varphi\right)(y)\right)} {|x-y|^{N+2s}} \, dx \, dy \right\} = 0.
\end{equation}
Thus, utilizing \eqref{eq327} and \eqref{eq326} in \eqref{eq324} yields the desired result.
\end{proof}
As a consequence of Proposition \ref{prop3.1}, we provide a straightforward proof of the uniqueness result.
\begin{lemma}
 Let $q>0$  then, the problem \eqref{P_q} admits unique weak solution in $\mathcal{X}^{1,2}_{\mathcal{D}}(U)$.
 \end{lemma}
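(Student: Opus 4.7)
The plan is to establish both existence and uniqueness of a weak solution in $\mathcal{X}^{1,2}_{\mathcal{D}}(U)$. For the existence part I would simply invoke the two preceding existence theorems: Theorem \ref{existence} supplies, for $0<q\leq 1$, a weak solution $u\in\mathcal{X}^{1,2}_{\mathcal{D}}(U)$ in the sense of Definition \ref{d1}, while Theorem \ref{thm3.8} produces, for $q>1$, a weak solution satisfying \eqref{dd2.2} for every $\varphi\in C^\infty_0(\Omega)$ with $u^{(q+1)/2}\in\mathcal{X}^{1,2}_{\mathcal{D}}(U)$. Hence the class in which the lemma asserts uniqueness is nonempty.

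The substance of the proof is the uniqueness argument. Suppose $u_1,u_2\in\mathcal{X}^{1,2}_{\mathcal{D}}(U)$ are two weak solutions of \eqref{P_q}. By Proposition \ref{prop3.1}, the weak formulation \eqref{dd2.2} extends to the full test-function class $\mathcal{X}^{1,2}_{\mathcal{D}}(U)$, so in particular $\varphi:=u_1-u_2$ is admissible in both equations. Subtracting the two formulations and exploiting the bilinearity of the quadratic form associated with $\mathcal{L}$ yields
$$\eta(u_1-u_2)^2\;=\;\int_{\Omega}\bigl(u_1^{-q}-u_2^{-q}\bigr)(u_1-u_2)\,dx.$$
The left-hand side is nonnegative by definition of $\eta$. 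On the right-hand side, Definition \ref{d1} forces $u_1,u_2>0$ almost everywhere in $\Omega$, and since $t\mapsto t^{-q}$ is strictly decreasing on $(0,\infty)$ the integrand is pointwise nonpositive. Consequently $\eta(u_1-u_2)=0$, and Proposition \ref{Poin} together with $u_1-u_2\equiv 0$ on $U^c$ upgrades this to $u_1\equiv u_2$ a.e. in $\mathbb{R}^N$.

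The main obstacle is legitimising the choice of test function $u_1-u_2$ and guaranteeing absolute convergence of the integral $\int_\Omega u_i^{-q}(u_1-u_2)\,dx$ despite the singular behaviour of $u_i^{-q}$ near the zero set of $u_i$. Both issues are handled, without any further approximation or truncation, by Proposition \ref{prop3.1}; its proof in fact yields the quantitative bound $\bigl|\int_\Omega u^{-q}\phi\,dx\bigr|\leq C\,\eta(u)\,\eta(\phi)$ valid for every weak solution $u$ and every $\phi\in\mathcal{X}^{1,2}_{\mathcal{D}}(U)$, which is exactly what is needed to make the displayed identity above rigorous. Once this ingredient is in place the monotonicity step is essentially free, and no additional compactness or regularity input is required.
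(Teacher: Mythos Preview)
Your uniqueness argument is correct and in fact slightly more direct than the paper's. The paper tests with the one-sided function $\varphi=(u_1-u_2)^-$, obtains
\[
\int_\Omega \nabla(u_1-u_2)\cdot\nabla(u_1-u_2)^-\,dx + \int_Q\frac{\bigl((u_1-u_2)(x)-(u_1-u_2)(y)\bigr)\bigl((u_1-u_2)^-(x)-(u_1-u_2)^-(y)\bigr)}{|x-y|^{N+2s}}\,dxdy\geq 0,
\]
bounds the left side above by $-\eta\bigl((u_1-u_2)^-\bigr)^2$, and concludes $u_1\geq u_2$; the argument is then repeated with $(u_2-u_1)^-$ to get the reverse inequality. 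Your choice $\varphi=u_1-u_2$ collapses these two passes into one: strict monotonicity of $t\mapsto t^{-q}$ forces the right side of your displayed identity to be nonpositive while the left side is $\eta(u_1-u_2)^2\geq 0$, so equality with zero follows immediately. Both approaches rest on Proposition~\ref{prop3.1} to legitimise the test function and the finiteness of $\int_\Omega u_i^{-q}\phi$; once that proposition is available your route is shorter. The paper's detour through the positive part is essentially a comparison-principle computation (parallel to Lemma~\ref{wcp}) and yields no additional information here.

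One remark on your opening paragraph: for $q>1$ Theorem~\ref{thm3.8} only places the solution in $\mathcal{X}^{1,2}_{\mathcal{D},\mathrm{loc}}(U)$ (with $u^{(q+1)/2}\in\mathcal{X}^{1,2}_{\mathcal{D}}(U)$), not in $\mathcal{X}^{1,2}_{\mathcal{D}}(U)$ itself, so it does not supply existence in the class where the lemma asserts uniqueness. The paper's proof treats the lemma purely as a uniqueness statement and does not revisit existence.
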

\begin{proof}
 By contradiction, suppose $u_1, u_2 \in \mathcal{X}^{1,2}_{\mathcal{D}}(U)$ are two weak solutions of the problem \eqref{P_q}. Then, by Proposition \ref{prop3.1}, for every $\varphi \in \mathcal{X}^{1,2}_{\mathcal{D}}(U)$, we have
\begin{equation}\label{eq328}
 \int_{\Omega} \nabla u_1 \cdot \nabla \varphi~ d x+\int_{Q}  \frac{(u_1(x)-u_1(y)) (\varphi(x) - \varphi(y))} {|x-y|^{N+2s}} \, \,dx\,dy=\int_{\Omega} \frac{1}{u^q_1(x)} \varphi(x) d x 
     \end{equation}
\begin{equation}\label{eq329}
\int_{\Omega} \nabla u_2 \cdot \nabla \varphi~ d x+\int_{Q}  \frac{(u_2(x)-u_2(y)) (\varphi(x) - \varphi(y))} {|x-y|^{N+2s}} \, \,dx\,dy=\int_{\Omega} \frac{1}{u^q_2(x)} \varphi(x) d x
    \end{equation}
Denoting  $u_1-u_2= w$ and putting $\varphi=\left(u_1-u_2\right)^{-} = w^-\in \mathcal{X}^{1,2}_{\mathcal{D}}(U)$ in \eqref{eq328} and \eqref{eq329} and then subtracting them, we have
\begin{equation}\label{eq3300}
\int_{\Omega} \nabla w\cdot \nabla w^{-} d x 
+\int_{Q} \frac{\left(w(x)-w(y)\right)\left(w^{-}(x)-w^{-}(y)\right)}{|x-y|^{N+2s}} \,dxdy  =\int_{\Omega}\left\{\frac{1}{u^q_1(x)}-\frac{1}{u^q_2(x)}\right\}w^{-}(x) d x \geq 0.
\end{equation}
Through the following easy calculation
$$
\begin{aligned}
   & (w(x)-w(y)) (w^-(x)-w^-(y))
    \leq -\left|\left(w^-(x)-w^-(y)\right)\right|^2
    \end{aligned}
    $$
and $\nabla w \cdot\nabla{w^-} = -|\nabla w^-|^2$ then from \eqref{eq3300} and 
$$0\leq  \int_{\Omega} -|\nabla{w^-}|^2 \,dx -  \int_{Q} {\dfrac{ \left|\left(w^-(x)-w^-(y)\right)\right|^2}{|x-y|^{N+2s}}} ~ dx dy\leq 0$$
which  implies $w^-=0$ a.e. in $\R^N,$ that is $u c_1\geq u_2$  a.e. in $\R^N.$ Similarly, by choosing $\varphi=\left(u_2-u_1\right)^{-} \in \mathcal{X}^{1,2}_{\mathcal{D}}(U)$ in \eqref{eq328} and \eqref{eq329} and repeating above arguments, we can obtain $u_2\geq u_1$ a.e. in $\R^N.$
Hence, the problem \eqref{P_q} admits a unique solution in 
$\mathcal{X}^{1,2}_{\mathcal{D}}(U).$
\end{proof}
\begin{theorem}\label{proof-mt3}
  Let $q>0$. The weak solution provided by Theorem \ref{existence} and Theorem \ref{thm3.8} belongs to $L^{\infty}(U)$. 
\end{theorem}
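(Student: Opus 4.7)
My plan is to adapt the Stampacchia-type truncation argument from Lemma \ref{linfty} to the approximating sequence $\{u_n\}$ from Lemma \ref{lem4.1}, extracting an $L^\infty(\Omega)$ bound that is uniform in $n$. Since $u_n \uparrow u$ pointwise a.e.\ in $\Omega$ (by Lemma \ref{u_n incre} and the constructions in Theorem \ref{existence} and Theorem \ref{thm3.8}), any such uniform bound transfers to $u$. Working through the sequence avoids the technical issue that, in the highly singular regime $q>1$, $u$ only belongs to $\mathcal{X}^{1,2}_{\mathcal{D},\mathrm{loc}}(U)$, making direct testing with $(u-k)^+$ delicate.

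Fix $k_0>0$ and, for $k\geq k_0$, set $A_n(k)=\{x\in\Omega:\,u_n(x)\geq k\}$. Since $u_n\in\mathcal{X}^{1,2}_{\mathcal{D}}(U)\cap L^\infty(U)$, the truncation $\varphi_k:=(u_n-k)^+$ lies in $\mathcal{X}^{1,2}_{\mathcal{D}}(U)$ and is an admissible test function in \eqref{P_nq}. The local piece yields $\int_\Omega|\nabla \varphi_k|^2\,dx$ directly. For the nonlocal piece I will use the pointwise inequality
$$(a-b)\bigl((a-k)^+-(b-k)^+\bigr)\geq \bigl((a-k)^+-(b-k)^+\bigr)^2\quad\text{for all }a,b\in\mathbb{R},$$
which after a case analysis (both $\geq k$, one $\geq k$ and the other $<k$, both $<k$) gives the bound from below by $[\varphi_k]_s^2$. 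Together these provide
$$\eta(\varphi_k)^2 \leq \int_\Omega \bigl(u_n+\tfrac{1}{n}\bigr)^{-q}\varphi_k\,dx.$$

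On $A_n(k)$ we have $u_n+\tfrac{1}{n}\geq k\geq k_0$, hence $(u_n+\tfrac{1}{n})^{-q}\leq k_0^{-q}$, while $\varphi_k\equiv 0$ off $A_n(k)$. Applying H\"older's inequality and the Sobolev embedding from Remark \ref{r2.5} yields, with a constant $C$ independent of $n$ and $k$,
$$\eta(\varphi_k)\leq C\,k_0^{-q}\,|A_n(k)|^{1-1/2^*}.$$
For $h>k\geq k_0$, the elementary inequality $(h-k)\chi_{A_n(h)}\leq \varphi_k$ combined with the Sobolev embedding gives
$$(h-k)^{2^*}|A_n(h)|\leq \|\varphi_k\|_{L^{2^*}(\Omega)}^{2^*}\leq C'\,\eta(\varphi_k)^{2^*}\leq C''\,|A_n(k)|^{2^*-1}.$$
Since $2^*-1>1$ for $N>2$, the Stampacchia lemma (as invoked in Lemma \ref{linfty}) produces a level $d>k_0$, depending only on $|A_n(k_0)|\leq|\Omega|$ and the universal constant $C''$, such that $|A_n(d)|=0$. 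Hence $\|u_n\|_{L^\infty(\Omega)}\leq d$ uniformly in $n$, and passing to the pointwise limit gives $\|u\|_{L^\infty(\Omega)}\leq d$.

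Finally, to upgrade from $L^\infty(\Omega)$ to $L^\infty(U)$, I will use that $u\equiv 0$ on $\mathcal{D}$ by the Dirichlet condition, while on $\mathcal{N}$ the nonlocal Neumann condition $\mathcal{N}_s u(x)=0$ rearranges exactly as in the final paragraph of the proof of Lemma \ref{linfty} to
$$u(x)=\frac{\int_\Omega u(y)|x-y|^{-(N+2s)}\,dy}{\int_\Omega|x-y|^{-(N+2s)}\,dy},\qquad x\in\mathcal{N},$$
which gives $|u(x)|\leq \|u\|_{L^\infty(\Omega)}$ pointwise. The main obstacle is the singular right-hand side, which is handled precisely by restricting to levels $k\geq k_0>0$: on $A_n(k)$ one has $u_n\geq k_0$, which tames $u_n^{-q}$ by a constant independent of $n$; the passage through the sequence $\{u_n\}$ rather than $u$ itself is what makes the argument uniform and simultaneously covers the $q>1$ case.
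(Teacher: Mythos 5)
Your proposal is correct and follows essentially the same route as the paper: the paper's proof simply says to apply the Stampacchia method of Lemma \ref{linfty} to the approximating solutions $u_n$ of \eqref{P_nq}, obtain a bound $\|u_n\|_{L^\infty(U)}\leq C$ uniform in $n$, and pass to the pointwise limit. Your write-up is in fact more careful than the paper's two-line argument, since you make explicit the one point the paper leaves implicit --- that restricting to levels $k\geq k_0>0$ gives $(u_n+\tfrac1n)^{-q}\leq k_0^{-q}$ on $A_n(k)$, which is what makes the resulting level $d$ independent of $n$ and simultaneously covers the $q>1$ regime where $u$ itself is only in $\mathcal{X}^{1,2}_{\mathcal{D},\mathrm{loc}}(U)$.
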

\begin{proof}
Following the same reasoning as in the proof of Lemma \ref{linfty}, we apply H\"older's inequality and the Stampacchia method to the solution \( u_n \) of \eqref{P_nq}. Consequently, we conclude that \( u_n \in L^\infty(U) \) and there exists a $C>0$(independent of $n$) such that $\|u_n\|_{L^\infty(U)}\leq C$, for all $n\in \mathbb N$. From this, it is straightforward to deduce that \( u \in L^\infty(U) \).
\end{proof}
\begin{definition}
    We say that  for $u,v \in \mathcal{X}^{1,2}_{\mathcal{D}}(U)$, 
$
\mathcal{L}u - u^{-q} \leq \mathcal{L}v - v^{-q}~ \text{weakly in }\Omega
$
if
 $$ \int_{\Omega} \mathcal{L}u \psi\,dx -\int_{\Omega} u^{-q}\psi\,dx\leq \int_{\Omega} \mathcal{L}v\psi\,dx -\int_{\Omega} v^{-q}\psi\,dx,~\forall~ 0\leq \psi \in \mathcal{X}^{1,2}_{\mathcal{D}}(U).$$
\end{definition}
Next, we shall establish the following weak comparison principle.
\begin{lemma}\label{wcp}(Weak Comparison Principle) Suppose $q>0$ and $u, v \in \mathcal{X}^{1,2}_{\mathcal{D}}(U)$ satisfies
$$
\begin{cases}\mathcal{L} v-v^{-q} \geq\mathcal{L} u- u^{-q} & \text {weakly in } \Omega, \\ u=v=0 & \text { in } U^c,\\
\mathcal{N}_s(u)=\mathcal{N}_s(v) =0 &\text{in} ~~{{\mathcal{N}}}\\
\frac{\partial u}{\partial \nu}=\frac{\partial u}{\partial \nu}=0 &\text{in}~~ \partial \Omega \cap \overline{{\mathcal{N}}}.
\end{cases}
$$
Then, $v \geq u$ a.e. in $\mathbb{R}^n$.
    \end{lemma}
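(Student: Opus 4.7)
The plan is to test the weak differential inequality with the admissible function $\psi = (u-v)^+$ and then show that the mixed energy $\eta((u-v)^+)^2$ is squeezed between a nonpositive and a nonnegative quantity, forcing $(u-v)^+ \equiv 0$. First I would verify that $(u-v)^+ \in \mathcal{X}^{1,2}_{\mathcal{D}}(U)$: the space is stable under truncations by constants since $u, v \in H^1(\mathbb R^N)$ with $u, v \equiv 0$ in $U^c$, so $(u-v)^+$ inherits the same vanishing in $U^c$ and is a legitimate nonnegative test function. Substituting it into the weak inequality yields
\begin{equation*}
\int_\Omega \mathcal{L}(v-u)\,(u-v)^+\,dx \;\geq\; \int_\Omega \bigl(v^{-q} - u^{-q}\bigr)(u-v)^+\,dx.
\end{equation*}

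Next, setting $w = u - v$, I would expand the left-hand side via Proposition \ref{P} extended by density to $\mathcal{X}^{1,2}_{\mathcal{D}}(U)$. Because both $u$ and $v$ satisfy the Neumann condition on $\partial\Omega \cap \overline{\mathcal{N}}$ and the nonlocal Neumann condition on $\mathcal{N}$, all boundary contributions cancel, and
\begin{equation*}
\int_\Omega \mathcal{L}w\cdot w^+\,dx = \int_\Omega \nabla w\cdot \nabla w^+\,dx + \int_Q \frac{(w(x)-w(y))(w^+(x)-w^+(y))}{|x-y|^{N+2s}}\,dx\,dy.
\end{equation*}
Decomposing $w = w^+ - w^-$, the pointwise identities $\nabla w \cdot \nabla w^+ = |\nabla w^+|^2$ and $(w(x)-w(y))(w^+(x)-w^+(y)) \geq (w^+(x)-w^+(y))^2$ (the leftover cross terms are $-w^-(x)w^+(y)-w^+(x)w^-(y) \leq 0$, exactly as computed in Lemma \ref{l3.1}) imply
\begin{equation*}
\int_\Omega \mathcal{L}w\cdot w^+\,dx \;\geq\; \eta(w^+)^2.
\end{equation*}

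For the right-hand side, on the set $\{w^+ > 0\} = \{u > v\}$ both $u,v$ are strictly positive in $\Omega$ (implicit from the fact that $u^{-q}, v^{-q}$ have to make sense in the weak formulation) and $u>v$, so $v^{-q} > u^{-q}$ and $(v^{-q}-u^{-q})\,w^+ \geq 0$ pointwise. Chaining the three estimates gives
\begin{equation*}
0 \;\geq\; -\eta(w^+)^2 \;\geq\; -\int_\Omega \mathcal{L}w\cdot w^+\,dx \;\geq\; \int_\Omega (v^{-q}-u^{-q})\, w^+\,dx \;\geq\; 0,
\end{equation*}
so every term vanishes. In particular $\eta((u-v)^+) = 0$, and Proposition \ref{Poin} forces $(u-v)^+ \equiv 0$ a.e., i.e., $v \geq u$ a.e. in $\mathbb R^N$.

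The main obstacle I anticipate is the technical point of justifying that $(u-v)^+$ is a valid test function and that the extension of Proposition \ref{P} to this nonsmooth truncation legitimately eliminates all boundary terms; this should follow from the standard density/approximation argument already implicit in the treatment of $u^-$ in the proof of Lemma \ref{l3.1}, since the mixed Dirichlet/Neumann structure is preserved by the $\max(\cdot,0)$ operation.
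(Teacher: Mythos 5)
Your proposal is correct and follows essentially the same route as the paper: test the weak inequality with $(u-v)^+$, use the sign of the singular term on $\{u>v\}$, and bound the operator term via the pointwise inequalities $\nabla w\cdot\nabla w^+=|\nabla w^+|^2$ and $(w(x)-w(y))(w^+(x)-w^+(y))\geq (w^+(x)-w^+(y))^2$ to squeeze $\eta((u-v)^+)^2$ to zero. Your added care about the admissibility of the truncated test function and the cancellation of boundary terms is a reasonable supplement but does not change the argument.
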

\begin{proof}
 Suppose $0\leq \varphi \in\mathcal{X}^{1,2}_{\mathcal{D}}(U)$, we can see
$$
\int_\Omega \mathcal{L} v \varphi~dx-\int_{\Omega} v^{-q} \varphi \,d x \geq\int_\Omega \mathcal{L} u \varphi~dx-\int_{\Omega} u^{-q} \varphi d x .
$$
If we choose $\varphi=(u-v)^{+}$, the above inequality can be written as
\begin{equation}\label{411}
\int_\Omega \mathcal{L}(v-u) (u-v)^+~dx-\int_{\Omega} (u-v)^{+}\left(\frac{1}{v^q}-\frac{1}{u^q}\right) d x \geq 0.
    \end{equation}
But we observe that $\dint_{\Omega} (u-v)^{+}\left(\frac{1}{v^q}-\frac{1}{u^q}\right) d x \geq 0$ which says that
\begin{equation}\label{412}
 \int_\Omega \mathcal{L}(v-u) (u-v)^+~dx\geq 0 .
    \end{equation}
Setting $w=u-v$, we write $w=(u-v)^{+}-(u-v)^{-}$. Also, studying case by case, it is not hard to see that
\begin{equation}\label{413}
[w(y)-w(x)][\varphi(x)-\varphi(y)] \leq-[\varphi(x)-\varphi(y)]^2,
    \end{equation}
and 
\begin{equation}\label{414}
\nabla w(x).\nabla \varphi(x) \leq -[\nabla \varphi(x)]^2~~\text{for almost every }~x,y \in \mathbb{R}^n.
    \end{equation}
Using \eqref{413} and \eqref{414}, we  have
\begin{equation}\label{415}
\begin{aligned}
\int_\Omega \mathcal{L}(v-u) (u-v)^+~dx & =\frac{1}{2}\int_{\mathbb{R}^n} \nabla w(x). \nabla \varphi(x)\,dx- \frac{1}{2}  \int_{\mathbb{R}^{2n}} \frac{[w(y)-w(x)][\varphi(x)-\varphi(y)]}{|x-y|^{N+2 s}} d y d x \\
& \leq \frac{1}{2}\int_{\mathbb{R}^n} -[\nabla \varphi(x)]^2\,dx+\frac{1}{2}  \int_{\mathbb{R}^{2n}} \frac{-[\varphi(x)-\varphi(y)]^2}{|x-y|^{N+2 s}} d y d x \leq 0.
\end{aligned}
    \end{equation}
Combining \eqref{412} with \eqref{415}, we have $\eta(\varphi)=0$. 
 Thus, $v \geq u$ a.e. in $\mathbb{R}^n$.
   \end{proof}
   \begin{remark}\label{rem3.3}
       Let $u_n \in \mathcal{X}^{1,2}_{\mathcal{D}}(U)$ be the solution to problem \eqref{P_nq} as established by Lemma \ref{u_n incre}, and let $\hat{u}$ represent the pointwise limit of $u_n$ in $\mathbb{R}^N$. By Theorem \ref{existence}, $\hat{u} \in \mathcal{X}^{1,2}_{\mathcal{D}}(U)$ is the weak solution of \eqref{P_q}. Furthermore, we have $u_n \leq \hat{u}$ for all $n \in \mathbb{N}$.  
       \end{remark}

\subsection{Complementary properties}
This section contains various additional observations and properties satisfied by the sequence $\{u_n\}$ resulting in exceptional characterizations. 
\begin{Proposition}\label{p3.2}
  \begin{enumerate}
      \item  For any $\psi \in \mathcal{X}^{1,2}_{\mathcal{D}}(U)$ and every $n\in\N$, we have\[ 
\eta(u_n)^2 \leq \eta(\psi)^2 +  \int_{\Omega} \frac{(u_n - \psi) }{\left( u_n + \frac{1}{n} \right)^q} \, dx. 
\]
\item $\{\eta(u_n)\}$ is a non- decreasing sequence and
up to a subsequence $ u_n   \to  \hat{u} $ converges strongly in $\mathcal{X}^{1,2}_{\mathcal{D}}(U)$, as $n\to\infty$, for  $\hat{u}\in \mathcal{X}^{1,2}_{\mathcal{D}}(U)$, given in Remark \ref{rem3.3}.
  \end{enumerate}
\end{Proposition}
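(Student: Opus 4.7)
\textbf{Proof plan for Proposition \ref{p3.2}.}

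\textit{Part (1).} I view $u_n$ as the unique solution to the auxiliary linear problem $\mathcal{L}w = f_n$ with the bounded datum $f_n := (u_n + \tfrac{1}{n})^{-q}$ treated as fixed (cf.\ Lemma \ref{l3.1} and Lemma \ref{lem4.1}). By Proposition \ref{prop3.1} (or its obvious analogue for the regularised problem, whose proof is easier because $f_n \in L^\infty$), testing the equation for $u_n$ against $u_n - \psi \in \mathcal{X}^{1,2}_{\mathcal{D}}(U)$ is admissible, giving
\[
\langle u_n, u_n - \psi\rangle = \int_\Omega \frac{u_n - \psi}{(u_n + \tfrac{1}{n})^q}\,dx.
\]
Combining this with the purely algebraic identity $\eta(u_n)^2 = \eta(\psi)^2 + 2\langle \psi, u_n - \psi\rangle + \eta(u_n - \psi)^2$ yields
\[
\eta(u_n)^2 + \eta(u_n - \psi)^2 = \eta(\psi)^2 + 2\int_\Omega \frac{u_n - \psi}{(u_n + \tfrac{1}{n})^q}\,dx,
\]
so discarding the nonnegative term $\eta(u_n - \psi)^2$ delivers the asserted bound (equivalently, this is the comparison of the strictly convex functional $v \mapsto \tfrac{1}{2}\eta(v)^2 - \int_\Omega f_n v\,dx$ at its unique minimiser $u_n$ against an arbitrary competitor $\psi$).

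\textit{Part (2): monotonicity of $\{\eta(u_n)\}$.} Specialise (1) with $\psi = u_{n+1}$. Lemma \ref{u_n incre} gives $u_n \le u_{n+1}$ a.e., while $f_n > 0$, so the integral on the right-hand side is nonpositive and therefore $\eta(u_n)^2 \le \eta(u_{n+1})^2$.

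\textit{Part (2): strong convergence.} Lemma \ref{lem4.3} provides $\sup_n \eta(u_n) < \infty$, so along a (not relabelled) subsequence $u_n \rightharpoonup \hat u$ in $\mathcal{X}^{1,2}_{\mathcal{D}}(U)$ and $u_n \to \hat u$ a.e., where $\hat u$ is the weak solution of \eqref{P_q} furnished by Theorem \ref{existence} and satisfies $u_n \le \hat u$ for every $n$ (Remark \ref{rem3.3}). Applying (1) with $\psi = \hat u$, the signs $u_n - \hat u \le 0$ and $f_n > 0$ force the integral contribution to be nonpositive, so
\[
\eta(u_n)^2 \le \eta(\hat u)^2 \qquad \text{for every } n.
\]
The weak lower semicontinuity of $\eta(\cdot)^2$ then squeezes $\eta(u_n) \to \eta(\hat u)$, and in the Hilbert space $\mathcal{X}^{1,2}_{\mathcal{D}}(U)$ weak convergence together with convergence of norms upgrades to strong convergence of $u_n$ to $\hat u$.

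\textit{Main obstacle.} The only delicate point is justifying the use of $\psi \in \mathcal{X}^{1,2}_{\mathcal{D}}(U)$ as a test function in the equation solved by $u_n$, since Definition \ref{d1} only formally permits $\varphi \in C^\infty_0(\Omega)$. The density-plus-lower-semicontinuity argument of Proposition \ref{prop3.1} applies verbatim (in fact more easily because the regularised datum is bounded rather than singular); once that is granted, both assertions follow from short algebraic rearrangements as above.
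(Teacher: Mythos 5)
Your argument is correct and follows essentially the same route as the paper: part (1) is the comparison of the strictly convex energy at its minimiser $u_n$ against a competitor $\psi$ (the paper phrases this via the minimisation inequality from Lemma \ref{lem4.1}, you via the Euler--Lagrange equation plus the expansion of $\eta(u_n-\psi)^2$, which are equivalent), and part (2) uses $\psi=u_{n+1}$ and $\psi=\hat u$ together with weak lower semicontinuity and uniform convexity exactly as the paper does. The only caveat is that both your derivation and the paper's actually yield the constant $2$ in front of $\int_\Omega (u_n-\psi)\left(u_n+\tfrac1n\right)^{-q}dx$ rather than the constant $1$ appearing in the statement; this is immaterial for the proposition's use, since in every application one has $u_n-\psi\le 0$ and the integral term is simply discarded as nonpositive.
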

\begin{proof} (1)
   By Lemma \ref{lem4.1}, for every \( h \in \mathcal{X}^{1,2}_{\mathcal{D}}(U) \), there exists a unique solution \( v \in \mathcal{X}^{1,2}_{\mathcal{D}}(U) \) to the following problem
 \begin{equation}\label{eq341}
\begin{cases}  
     \mathcal{L}v &= ~\frac{1}{\left( h^+ + \frac{1}{n} \right)^q}, \quad v>0 \quad 
 \text{in }\Omega,\\ 
  v&=~~0~~\text{in} ~~U^c,\\
 \mathcal{N}_s(v)&=~~0 ~~\text{in} ~~{{\mathcal{N}}}, \\
 \frac{\partial v}{\partial \nu}&=~~0 ~~\text{in}~~ \partial \Omega \cap \overline{{\mathcal{N}}},
      \end{cases}
    \end{equation}
which is a minimizer of the functional \( \mathcal{F} : \mathcal{X}^{1,2}_{\mathcal{D}}(U) \to \mathbb{R} \) given by
\[
\mathcal{F}(\psi) = \frac{1}{2} \eta(\psi)^2 - \int_{\Omega} \frac{\psi}{\left( h^+ + \frac{1}{n} \right)^q}  \, dx.
\]
 This means 
\begin{equation}\label{eq336}
\frac{1}{2} \eta(v)^2 - \int_{\Omega} \frac{v}{\left( h^+ + \frac{1}{n} \right)^q}  \, dx \leq \frac{1}{2} \eta(\psi)^2 - \int_{\Omega} \frac{\psi}{\left( h^+ + \frac{1}{n} \right)^q} \, dx~~~\forall~ \psi\in\mathcal{X}^{1,2}_{\mathcal{D}}(U). 
\end{equation}
 Now, choosing $ h = u_n $ in \eqref{eq341} and using Lemma \ref{lem4.1}, from \eqref{eq336} we obtain 
\begin{equation*}
\frac{1}{2} \eta(u_n)^2 - \int_{\Omega} \frac{u_n}{\left( u_n^+ + \frac{1}{n} \right)^q}  \, dx \leq \frac{1}{2} \eta(\psi)^2 - \int_{\Omega} \frac{\psi}{\left( u_n^+ + \frac{1}{n} \right)^q} \, dx, 
\end{equation*}
that is
\begin{equation}\label{eq337}
 \eta(u_n)^2 \leq \eta(\psi)^2 +  \int_{\Omega} \frac{(u_n - \psi) }{\left( u_n^+ + \frac{1}{n} \right)^q} \, dx. 
   \end{equation}
Hence, the proof of the first assertion is complete.

(2) Since, by Lemma \ref{lem4.3}, the sequence \( \{ u_n \} \) is uniformly bounded in \( \mathcal{X}^{1,2}_{\mathcal{D}}(U) \), it follows that, up to a subsequence, \( u_n \rightharpoonup \hat{u} \) weakly in \( \mathcal{X}^{1,2}_{\mathcal{D}}(U) \). Therefore, we have 
\begin{equation}\label{eq339}
\eta(\hat{u}) \leq \lim_{n \to \infty} \eta(u_n).
    \end{equation} For proving second assertion,  taking \( \psi = u_{n+1} \) in  \eqref{eq337} and using the monotonicity property \( u_n \leq u_{n+1} \) from Lemma \eqref{u_n incre}, we get
 \begin{equation*}
 \eta(u_n)^2 \leq \eta(u_{n+1})^2 +  \int_{\Omega} \frac{(u_n - u_{n+1}) }{\left( u_n^+ + \frac{1}{n} \right)^q} \, dx. 
   \end{equation*}
 Thus, we have \( \eta(u_n) \leq \eta(u_{n+1})\), since $u_n-u_{n+1}\leq 0$.
Furthermore, it is easy to observe that \( u_n \leq \hat{u} \), see Remark \ref{rem3.3} and choosing \( \psi = \hat{u} \) in \eqref{eq337}, we obtain
\[
\eta(u_n) \leq \eta(\hat{u}), 
\]
using this and from $\eta(u_n)\leq \eta(u_{n+1})$, for each $n\in\N$, we deduce that
\begin{equation}\label{eq338}
\lim_{n \to \infty} \eta(u_n) \leq \eta(\hat{u}).
    \end{equation}

Hence, from \eqref{eq338}, \eqref{eq339} and using  the uniform convexity of $\mathcal{X}^{1,2}_{\mathcal{D}}(U)$, we get  $ u_n   \to  \hat{u} $  strongly in $\mathcal{X}^{1,2}_{\mathcal{D}}(U)$ as $n\to\infty$. Thus, the proof of the second assertion is complete now.
\end{proof}

\begin{lemma}\label{lem310}
 Let $0<q<1$ and \( \hat{\mathcal{I}} : \mathcal{X}^{1,2}_{\mathcal{D}}(U) \to \mathbb{R} \) be a functional defined as
\[
\hat{\mathcal{I}} (v) = \frac{1}{2} \eta(v)^2 - \frac{1}{1 - q} \int_{\Omega} (v^+)^{1-q}  \, dx.
\]
Then \(\hat{u} \) is a minimizer of the functional \( \hat{\mathcal{I}} \).
   \end{lemma}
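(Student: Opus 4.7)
The plan is to exploit the convexity of $\hat{\mathcal{I}}$ on the nonnegative cone of $\mathcal{X}^{1,2}_{\mathcal{D}}(U)$ and to identify $\hat u$ as a critical point via the weak formulation of \eqref{P_q}, so that the minimizing property follows automatically. First, I reduce to the case $\psi \geq 0$: the elementary inequalities $|\nabla \psi|^2 \geq |\nabla \psi^+|^2$ a.e.\ in $\Omega$ and $|a-b|^2 \geq |a^+ - b^+|^2$ for every $a,b\in \R$ yield $\eta(\psi)^2 \geq \eta(\psi^+)^2$, so $\hat{\mathcal{I}}(\psi) \geq \hat{\mathcal{I}}(\psi^+)$, and it suffices to prove the minimality inequality for nonnegative $\psi$.

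Next, I observe that $v \mapsto \tfrac{1}{2}\eta(v)^2$ is convex on the whole Hilbert space $\mathcal{X}^{1,2}_{\mathcal{D}}(U)$, while $v \mapsto -\int_\Omega v^{1-q}\,dx$ is convex on the cone $\{v \geq 0\}$ because $t \mapsto t^{1-q}$ is concave on $[0,\infty)$ when $0<q<1$; hence $\hat{\mathcal{I}}$ is convex on that cone. For fixed $\psi \geq 0$ the segment $\phi_t := (1-t)\hat u + t\psi$, $t \in [0,1]$, remains nonnegative, and convexity gives
\[
\hat{\mathcal{I}}(\psi) - \hat{\mathcal{I}}(\hat u) \;\geq\; \lim_{t \downarrow 0^+}\frac{\hat{\mathcal{I}}(\phi_t) - \hat{\mathcal{I}}(\hat u)}{t},
\]
so the proof reduces to showing that this right derivative is zero.

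Expanding $\eta(\phi_t)^2 = \eta(\hat u)^2 + 2t\langle \hat u,\psi-\hat u\rangle + t^2\eta(\psi-\hat u)^2$, the quadratic part contributes $\langle \hat u, \psi - \hat u\rangle$ to the derivative at $t=0^+$. For the singular part, Lemma \ref{u_n incre}(2) guarantees $\hat u > 0$ a.e.\ in $\Omega$, so $\phi_t \geq (1-t)\hat u > 0$ a.e.\ for $t \in [0,1/2]$ and the mean-value theorem yields pointwise $\tfrac{\phi_t^{1-q} - \hat u^{1-q}}{t} \to (1-q)\hat u^{-q}(\psi - \hat u)$. The difference quotients are dominated by $C\hat u^{-q}(\psi + \hat u)$, which lies in $L^1(\Omega)$ since $\hat u^{1-q}$ is integrable (because $\hat u \in L^\infty(U)$ by Theorem \ref{proof-mt3}) and the Sobolev-type estimate $|\int_\Omega \hat u^{-q}\phi\,dx| \leq C\eta(\hat u)\eta(\phi)$ derived inside the proof of Proposition \ref{prop3.1} makes $\int_\Omega \hat u^{-q}\psi\,dx$ finite. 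Dominated convergence then gives
\[
\lim_{t \downarrow 0^+}\frac{\hat{\mathcal{I}}(\phi_t) - \hat{\mathcal{I}}(\hat u)}{t} \;=\; \langle \hat u, \psi - \hat u\rangle - \int_\Omega \hat u^{-q}(\psi - \hat u)\,dx.
\]

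Finally, Proposition \ref{prop3.1} asserts that $\hat u$ satisfies the weak formulation of \eqref{P_q} against every test function in $\mathcal{X}^{1,2}_{\mathcal{D}}(U)$; choosing $\varphi = \psi - \hat u$ gives $\langle \hat u, \psi - \hat u\rangle = \int_\Omega \hat u^{-q}(\psi - \hat u)\,dx$, so the right derivative vanishes and we conclude $\hat{\mathcal{I}}(\hat u) \leq \hat{\mathcal{I}}(\psi)$. The main obstacle will be justifying the differentiability of the singular integral $\int_\Omega \phi_t^{1-q}\,dx$ at $t = 0^+$: this is delicate because of the pointwise blow-up of $\hat u^{-q}$ on $\{\hat u = 0\}$, and it is resolved by the strict positivity of $\hat u$ on compact subsets of $\Omega$ (Lemma \ref{u_n incre}) combined with the integrability of $\hat u^{-q}\psi$ coming from Proposition \ref{prop3.1}.
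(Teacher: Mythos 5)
Your proof is correct, but it follows a genuinely different route from the paper. The paper works through the approximation scheme: it introduces the regularized functionals $\mathcal{I}_n(v)=\tfrac12\eta(v)^2-\int_\Omega \mathcal{G}_n(v)\,dx$ with $\mathcal{G}_n(v)=\tfrac{1}{1-q}(v^++\tfrac1n)^{1-q}-n^q v^-$, identifies the approximating solutions $u_n$ of \eqref{P_nq} (via uniqueness) as global minimizers of $\mathcal{I}_n$, and then passes to the limit in the inequality $\mathcal{I}_n(u_n)\le\mathcal{I}_n(v^+)$ using the strong convergence $u_n\to\hat u$ in $\mathcal{X}^{1,2}_{\mathcal{D}}(U)$ established in Proposition \ref{p3.2}(2) together with dominated convergence for the lower-order terms. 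You instead work directly at the limit: after the (correct) reduction $\hat{\mathcal{I}}(\psi)\ge\hat{\mathcal{I}}(\psi^+)$, you use convexity of $\hat{\mathcal{I}}$ on the nonnegative cone, compute the one-sided directional derivative at $\hat u$ along $\psi-\hat u$ (justified by the a.e.\ strict positivity of $\hat u$ from Lemma \ref{u_n incre} and the integrability estimate $\int_\Omega\hat u^{-q}|\phi|\,dx\le C\eta(\hat u)\eta(\phi)$ from Proposition \ref{prop3.1}; note the monotonicity in $t$ of the concave difference quotients even lets you use monotone convergence here), and kill that derivative with the extended weak formulation of Proposition \ref{prop3.1}. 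Your argument is the standard ``critical point of a convex functional is a global minimizer'' device: it is more self-contained in that it bypasses the approximating sequence and, in particular, does not need the strong convergence $u_n\to\hat u$ of Proposition \ref{p3.2}; the price is the careful justification of differentiating the singular term $\int_\Omega\phi_t^{1-q}\,dx$ at $t=0^+$, which you handle adequately. The paper's route, by contrast, recycles machinery it has already built and avoids any differentiation of the singular integral.
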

\begin{proof}
 Let us  consider the functional \( \mathcal{I}_n : \mathcal{X}^{1,2}_{\mathcal{D}}(U) \to \mathbb{R} \) defined by
\[
\mathcal{I}_n (v) = \frac{1}{2} \eta(v)^2 - \int_{\Omega} \mathcal{G}_n(v)  \, dx,
\]
where
\[
\mathcal{G}_n(v) = \frac{1}{1 - q} \left( v^+ + \frac{1}{n} \right)^{1-q} - n^q v^-.
\]
It is easy to see that \( \mathcal{I}_n \) is bounded below and coercive. As a consequence,  $\mathcal{I}_n$ is \( C^1 \). So, by the weak lower semicontinuity of $\mathcal{I}_n$, functional  \( \mathcal{I}_n \) consists of a minimizer \( v_n \in \mathcal{X}^{1,2}_{\mathcal{D}}(U) \) such that
\[
\langle \mathcal{I}'_n(v_n), \psi \rangle = 0, \quad \text{for all } \psi \in \mathcal{X}^{1,2}_{\mathcal{D}}(U).
\] 
Now we can see \( \mathcal{I}_n(v_n) \leq \mathcal{I}_n(v^+_n) \) from it, we get \( v_n \geq 0 \)  a.e. in  \( \R^N \). Therefore, \( v_n \) solves the approximated problem \eqref{P_nq}. Hence, uniqueness follows from  Lemma \ref{lem4.1}, then  \( u_n = v_n \) a.e. in $\R^n$ and so \( u_n \) is a minimizer of \( \mathcal{I}_n \), since $v_n$ is a minimizer of $\mathcal{I}_n$. Thus,  
\begin{equation}\label{eq340}
\mathcal{I}_n(u_n) \leq \mathcal{I}_n(v^+). 
    \end{equation}
Since \( u_n \leq\hat{u} \), by the Lebesgue Dominated Convergence Theorem, we have
\begin{equation}\label{eq3041}
\lim_{n \to \infty} \int_{\Omega} \mathcal{G}_n(u_n)  \, dx = \frac{1}{1-q} \int_{\Omega} \left(u_n+\frac{1}{n}\right)^{1-q} \, dx= \frac{1}{1-q} \int_{\Omega}\hat{u}^{1-q} \, dx.
    \end{equation}
Moreover, by Proposition \ref{p3.2}, $ u_n   \to  \hat{u} $ converges strongly in $\mathcal{X}^{1,2}_{\mathcal{D}}(U)$, as $n\to\infty$ that is
\begin{equation}\label{eq3042}
\lim_{n \to \infty} \eta(u_n) = \eta(\hat{u}).
    \end{equation}
Hence, from \eqref{eq3041} and \eqref{eq3042}, we obtain
\begin{equation}\label{eq3043}
\lim_{n \to \infty} \mathcal{I}_n(u_n) = \hat{\mathcal{I}}(\hat{u}).
    \end{equation}
Moreover, we see 
\begin{equation}\label{eq342}
 \lim_{n \to \infty} \int_{\Omega} \mathcal{G}_n(v^+)  \, dx = \frac{1}{1 - q} \int_{\Omega} (v^+)^{1-q}  \, dx,~~~ \forall~  v \in \mathcal{X}^{1,2}_{\mathcal{D}}(U) 
   \end{equation}
and observe that \( \eta(v^+) \leq \eta(v) \) and using \eqref{eq3043}, \eqref{eq342} in \eqref{eq340}, we conclude  that \( \hat{\mathcal{I}}(\hat{u}) \leq \hat{\mathcal{I}}(v) \), ~$\forall$~\( v \in \mathcal{X}^{1,2}_{\mathcal{D}}(U) \).
Hence,  \(\hat{u} \) is a minimizer of the functional \( \hat{\mathcal{I}} \).

Thus, the proof of this lemma is now completed.
    \end{proof}
 Let us define
\[\Re(U)= \inf_{v\in \mathcal{X}^{1,2}_{\mathcal{D}}(U)\setminus \{0\}}\left\{\eta(v)^2:\int_\Omega |v|^{1-q}  \, dx = 1 \right\}\]
and 
\[
\mathcal{Z} = \left\{ v \in\mathcal{X}^{1,2}_{\mathcal{D}}(U) : \int_\Omega |v|^{1-q}  \, dx = 1 \right\}.
\]
We will establish the following result, which is crucial for proving the mixed Sobolev inequality over $\mathcal{X}^{1,2}_{\mathcal{D}}(U)$.
    \begin{Proposition}\label{pp3.3}
    Let $0<q<1$. Suppose $\hat{u}\in \mathcal{X}^{1,2}_{\mathcal{D}}(U)$ is a weak solution to the problem \eqref{P_q} given by Theorem \ref{existence}, then 
    \begin{equation}\label{eq30}
   \Re(U)=\left(\eta(\hat{u})\right)^{\frac{-(2+2q)}{1-q}}. 
\end{equation}
\end{Proposition}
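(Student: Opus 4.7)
The plan is to exploit the fact that by Lemma 3.10, $\hat{u}$ minimizes the energy functional $\hat{\mathcal{I}}$, combined with the scaling structure of the two terms in $\hat{\mathcal{I}}$. First I would compute $\hat{\mathcal{I}}(\hat{u})$ explicitly using $\hat{u}$ as its own test function: Proposition \ref{prop3.1} allows us to take $\varphi = \hat{u} \in \mathcal{X}^{1,2}_{\mathcal{D}}(U)$ in the weak formulation of $(P_q)$, which gives $\eta(\hat{u})^2 = \int_\Omega \hat{u}^{1-q}\,dx$. Plugging this into $\hat{\mathcal{I}}$ yields
\[
\hat{\mathcal{I}}(\hat{u}) = -\frac{1+q}{2(1-q)}\,\eta(\hat{u})^2.
\]

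Next I would compare against scaled test functions. For an arbitrary nonzero $v \in \mathcal{X}^{1,2}_{\mathcal{D}}(U)$ and $t>0$, a direct computation gives
\[
\hat{\mathcal{I}}(tv) = \frac{t^2}{2}\eta(v)^2 - \frac{t^{1-q}}{1-q}\int_\Omega (v^+)^{1-q}\,dx,
\]
and minimizing in $t>0$ produces $t^\ast = \bigl(\int_\Omega (v^+)^{1-q}dx \,/\, \eta(v)^2\bigr)^{1/(1+q)}$ with minimum value
\[
\hat{\mathcal{I}}(t^\ast v) = -\frac{1+q}{2(1-q)}\bigl(\textstyle\int_\Omega(v^+)^{1-q}dx\bigr)^{2/(1+q)}\,\eta(v)^{2(q-1)/(1+q)}.
\]
Since $\hat{\mathcal{I}}(\hat{u}) \leq \hat{\mathcal{I}}(t^\ast v)$ by Lemma \ref{lem310}, cancelling the (negative) constant $-\frac{1+q}{2(1-q)}$ reverses the inequality and, after raising to the power $(1+q)/2$, gives the pivotal estimate
\[
\eta(\hat{u})^{1+q}\,\eta(v)^{1-q} \;\geq\; \int_\Omega (v^+)^{1-q}\,dx.
\]

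To pass from $v^+$ to $|v|$, I would apply the inequality to $w = |v|$, noting $|\nabla|v||=|\nabla v|$ and $\bigl||v(x)|-|v(y)|\bigr|\leq|v(x)-v(y)|$, so $\eta(|v|) \leq \eta(v)$ while $\int_\Omega(|v|)^{1-q}dx = \int_\Omega |v|^{1-q}dx$. Restricting to $v \in \mathcal{Z}$ (so $\int_\Omega |v|^{1-q}dx = 1$) and solving for $\eta(v)^2$ produces
\[
\eta(v)^2 \;\geq\; \eta(\hat{u})^{-2(1+q)/(1-q)}, \qquad \forall\,v\in\mathcal{Z},
\]
giving $\Re(U) \geq \eta(\hat{u})^{-(2+2q)/(1-q)}$.

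For the reverse inequality I would exhibit equality via the normalized minimizer itself: set $v_0 = \hat{u}/\bigl(\int_\Omega \hat{u}^{1-q}dx\bigr)^{1/(1-q)} \in \mathcal{Z}$. Using once more $\eta(\hat{u})^2 = \int_\Omega \hat{u}^{1-q}dx$, a direct calculation shows $\eta(v_0)^2 = \eta(\hat{u})^{-(2+2q)/(1-q)}$, so $\Re(U) \leq \eta(v_0)^2$ matches the lower bound. I expect the main technical point to be the justification that testing with $\hat{u}$ itself is legitimate (which is exactly what Proposition \ref{prop3.1} provides) and the book-keeping to handle the $v^+$ vs $|v|$ distinction in the infimum; both obstacles are resolved by standard truncation-and-density arguments already in place.
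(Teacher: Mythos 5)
Your proposal is correct and follows essentially the same route as the paper: both use the identity $\eta(\hat{u})^2=\int_\Omega\hat{u}^{1-q}\,dx$ from Proposition \ref{prop3.1}, the minimality of $\hat{u}$ for $\hat{\mathcal{I}}$ from Lemma \ref{lem310}, and optimization of $\hat{\mathcal{I}}$ along the ray $t\mapsto t|v|$ (your $t^\ast$ coincides with the paper's scaling $\lambda=\eta(v)^{-2/(1+q)}$ once $v$ is normalized in $\mathcal{Z}$), before exhibiting the normalized function $\mathcal{V}$ to show the bound is attained. Your handling of the sign reversal when cancelling the negative constant $\tfrac12-\tfrac1{1-q}$ is actually cleaner than the paper's, which misstates the intermediate inequality before arriving at the correct conclusion.
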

\begin{proof}
 We shall establish that
$
\Re(U) = \inf_{v \in \mathcal{Z}} \eta(v)^2 = \eta(\hat{u})^{ \frac{-(2+2q)}{1-q}}.
$
Applying  Proposition \eqref{prop3.1} and taking \(\hat{u}\) as a test function in \eqref{dd2.2} we get 
\begin{equation}\label{eq3045}
\eta(\hat{u})^2 = \int_\Omega \hat{u}^{1-q}  \, dx. 
    \end{equation}
    Let us consider \(\mathcal{V} = \left( \int_\Omega \hat{u}^{1-q}  \, dx \right)^{-\frac{1}{1-q}} \hat{u} \in \mathcal{Z} \), 
then using \eqref{eq3045}, we have
\begin{equation}\label{eq3046}
\eta(\mathcal{V})^2 
=\frac{\eta(\hat{u})^2 }{\left( \dint_\Omega \hat{u}^{1-q}  \, dx \right)^{\frac{2}{1-q}}}= \frac{\eta(\hat{u})^2 }{\left( \eta(\hat{u})\right)^{\frac{4}{1-q}}}= \eta(\hat{u})^{ \frac{-(2+2q)}{1-q}}.
  \end{equation}
Suppose \( v \in \mathcal{Z} \) and \( \lambda = \frac{1}{\eta(v)^{\frac{2}{1 + q }} } \). Using  Lemma \ref{lem310}, we know that \(\hat{u}\) is a minimizer of the functional \( \hat{\mathcal{I}} \), so  \(\hat{\mathcal{I}}  (\hat{u}) \leq \hat{\mathcal{I}}  (\lambda |v|) \). 
We can easily find that
$$
\begin{aligned}
\hat{\mathcal{I}} (\lambda |v|) = \frac{\lambda^2}{2} \eta(|v|)^2 - \frac{\lambda^{1-q}}{1-q} \int_\Omega |v|^{1-q}  \, dx &\leq \frac{\lambda^2}{2} \eta(v)^2 - \frac{\lambda^{1-q}}{1-q}
  =\left( \frac{1}{2} - \frac{1}{1-q} \right) \eta(v)^{\frac{-2+2q}{1+q}},
    \end{aligned}
    $$
    Hence,
    $$\left( \frac{1}{2} - \frac{1}{1-q} \right) \eta(\hat{u})^2 = \hat{\mathcal{I}}  (\hat{u})\leq \hat{\mathcal{I}} (\lambda |v|)=\left( \frac{1}{2} - \frac{1}{1-q} \right) \eta(v)^{\frac{-2+2q}{1+q}}
    $$
   which implies
    $\eta(\hat{u})^2\leq  \eta(v)^{\frac{-2+2q}{1+q}}. $
Since \( v \in \mathcal{Z} \) is arbitrary so it follows that 
{\begin{equation}\label{eq3047}
\eta(\hat{u})^{ \frac{-(2+2q)}{1-q}} \leq \inf_{v \in \mathcal{Z}} \eta(v)^2. 
    \end{equation}
As knowing \(\mathcal{V} \in \mathcal{Z}\) and from \eqref{eq3046} and \eqref{eq3047}, we have
$\eta(\mathcal{V})^2\leq \inf_{v \in \mathcal{Z}} \eta(v)^2$
which finishes the proof.}
    \end{proof}
Next, we aim to establish the mixed Sobolev Inequality, as stated in the following main theorem.
\begin{theorem}
 Let $0<q<1$ then for every $v\in \mathcal{X}^{1,2}_{\mathcal{D}}(U)$
\begin{equation}\label{eq3049}
    \eta(v)^2\geq C\left(\int_{\Omega}|v|^{1-q}\,dx\right)^{\frac{2}{1-q}}
\end{equation}
holds if and only if $C\leq \Re(U).$ Moreover, if \eqref{eq3049} is an equality for some $w\in \mathcal{X}^{1,2}_{\mathcal{D}}(U) \setminus \{0\}$, then $w= k \hat{u}$, for some constant $k>0$.
 \end{theorem}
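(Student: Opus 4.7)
The plan is to derive both directions of the ``iff'' from the scale invariance of the Rayleigh-type quotient $R(v):=\eta(v)^2\big/\bigl(\int_{\Omega}|v|^{1-q}\,dx\bigr)^{2/(1-q)}$, and then to treat the equality case by first reducing to nonnegative competitors and then invoking uniqueness for \eqref{P_q} through the functional $\hat{\mathcal{I}}$. For any $t>0$ and any nonzero $v$, $\eta(tv)^2=t^2\eta(v)^2$ and $\bigl(\int_{\Omega}|tv|^{1-q}\,dx\bigr)^{2/(1-q)}=t^2\bigl(\int_{\Omega}|v|^{1-q}\,dx\bigr)^{2/(1-q)}$, so $R$ is scale-invariant. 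Combined with the definition of $\Re(U)$ this gives
$$\inf_{v\in\mathcal{X}^{1,2}_{\mathcal{D}}(U)\setminus\{0\}}R(v)=\inf_{v\in\mathcal{Z}}\eta(v)^2=\Re(U),$$
so \eqref{eq3049} is equivalent to $C\leq\Re(U)$: the ``only if'' by restricting to $\mathcal{Z}$, the ``if'' by rescaling an arbitrary nonzero $v$ into $\mathcal{Z}$.

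Suppose now some $w\in\mathcal{X}^{1,2}_{\mathcal{D}}(U)\setminus\{0\}$ saturates \eqref{eq3049} with $C=\Re(U)$, and set $t:=\bigl(\int_{\Omega}|w|^{1-q}\,dx\bigr)^{-1/(1-q)}$, so $tw\in\mathcal{Z}$ with $\eta(tw)^2=\Re(U)$; thus $tw$ minimizes $\eta(\cdot)^2$ on $\mathcal{Z}$. Using $\bigl|\nabla|v|\bigr|=|\nabla v|$ a.e.\ together with the pointwise identity
$$|v(x)-v(y)|^2-\bigl||v(x)|-|v(y)|\bigr|^2=2\bigl(|v(x)v(y)|-v(x)v(y)\bigr)\geq 0,$$
I obtain $\eta(|tw|)\leq\eta(tw)$; combined with $|tw|\in\mathcal{Z}$ and minimality this forces $\eta(|tw|)=\eta(tw)=\sqrt{\Re(U)}$, hence $tw(x)\,tw(y)\geq 0$ for a.e.\ $(x,y)\in Q$. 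Thus $tw$ has constant sign a.e.\ in $\mathbb{R}^N$, and after replacing $w$ by $-w$ if necessary I may assume $tw\geq 0$.

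Finally, for any nonnegative $v\in\mathcal{Z}$ with $\eta(v)^2=\Re(U)$, minimizing $\lambda\mapsto\hat{\mathcal{I}}(\lambda v)=\tfrac{\lambda^2}{2}\Re(U)-\tfrac{\lambda^{1-q}}{1-q}$ over $\lambda>0$ yields $\lambda_*=\Re(U)^{-1/(1+q)}$, and using $\Re(U)^{-(1-q)/(1+q)}=\eta(\hat{u})^2$ (from Proposition \ref{pp3.3}) together with $\eta(\hat{u})^2=\int_{\Omega}\hat{u}^{1-q}\,dx$ one computes
$$\hat{\mathcal{I}}(\lambda_*v)=\Bigl(\tfrac{1}{2}-\tfrac{1}{1-q}\Bigr)\eta(\hat{u})^2=\hat{\mathcal{I}}(\hat{u}).$$
So $\lambda_*v$ is also a nonnegative minimizer of $\hat{\mathcal{I}}$. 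Strict convexity of $\hat{\mathcal{I}}$ on the nonnegative cone (strict convexity of $\tfrac12\eta(\cdot)^2$ on a Hilbert space combined with convexity of $-\tfrac{1}{1-q}\int(\cdot)^{1-q}$ coming from concavity of $t\mapsto t^{1-q}$ for $0<q<1$), together with Lemma \ref{lem310}, forces $\lambda_*v=\hat{u}$; applied to $v=tw$ this gives $w=(t\lambda_*)^{-1}\hat{u}=k\hat{u}$ with $k>0$. The most delicate step is the sign reduction: all slack in $\eta(|tw|)\leq\eta(tw)$ lives in the Gagliardo seminorm (the local gradient part already equalizes under $|\cdot|$), so one must exploit the pointwise identity above to turn the integral equality into the sign condition $tw(x)\,tw(y)\geq 0$ on $Q$ and hence into constant sign of $tw$ on $\mathbb{R}^N$.
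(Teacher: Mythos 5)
Your proof of the equivalence ``\eqref{eq3049} holds $\iff$ $C\leq\Re(U)$'' via scale invariance of the Rayleigh-type quotient is essentially the same as the paper's, just phrased more compactly; the paper argues directly with the normalized element $\mathcal{V}\in\mathcal{Z}$ for the necessity and a rescaling of an arbitrary $v$ for the sufficiency, which is the same content.

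For the equality case you take a genuinely different route. The paper, having reduced to a nonnegative minimizer $v\in\mathcal{Z}$ of $\eta(\cdot)^2$, introduces the auxiliary number $\zeta=\bigl(\int_\Omega(\tfrac{v}{2}+\tfrac{\mathcal V}{2})^{1-q}dx\bigr)^{1/(1-q)}$, uses the reverse Minkowski-type inequality for $0<1-q<1$ to show $\zeta\geq 1$, squeezes $\zeta=1$ from the chain $\Re(U)\leq\eta(h)^2\leq\Re(U)/\zeta^2\leq\Re(U)$, and then invokes strict convexity of the Hilbert norm to force $v=\mathcal V$. You instead push everything through the functional $\hat{\mathcal I}$ of Lemma \ref{lem310}: after the optimal rescaling $\lambda_*=\Re(U)^{-1/(1+q)}$ you compute, using Proposition \ref{pp3.3} and $\eta(\hat u)^2=\int_\Omega\hat u^{1-q}dx$, that $\hat{\mathcal I}(\lambda_*v)=\hat{\mathcal I}(\hat u)=\min\hat{\mathcal I}$, and then uniqueness of minimizers follows from strict convexity of $\hat{\mathcal I}$ on the nonnegative cone (strictly convex Hilbert square plus a convex term, since $t\mapsto t^{1-q}$ is concave on $\R_+$). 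Both arguments are valid; yours trades the paper's ad hoc $\zeta$-manipulation for one clean invocation of strict convexity of $\hat{\mathcal I}$, and it has the side benefit of making the sign-reduction step rigorous through the explicit pointwise identity $|v(x)-v(y)|^2-\bigl||v(x)|-|v(y)|\bigr|^2=2(|v(x)v(y)|-v(x)v(y))$, which the paper glosses over. One small caveat inherited from the statement itself rather than your argument: since \eqref{eq3049} only sees $|w|$, equality for $w$ implies equality for $-w$ as well, so the conclusion can only be $w=k\hat u$ with $k\neq 0$; you handle this correctly by replacing $w$ by $-w$ if necessary, and the paper's own proof acknowledges the same by writing $\mathcal V=\tau w$ or $\mathcal V=-\tau w$.
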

   \begin{proof}
Let us suppose that  \eqref{eq3049}
holds and on contrary, $C > \Re(U)$. From  \eqref{eq30} and \eqref{eq3046}, it is easy to see that
\[
C \left( \int_\Omega\mathcal{V}^{1-q}  \, dx \right)^{\frac{2}{1-q}} > \eta(\mathcal{V})^2
\]
and get a contradiction with inequality \eqref{eq3049}. Conversely, let us assume that
$
C \leq \Re(U) = \inf_{v \in \mathcal{Z}} \eta(v)^2 \leq \eta(w)^2,~\forall~w \in \mathcal{Z}.
$
The claim \eqref{eq3049} clearly holds if \( v \equiv 0 \) a.e. in \( \mathbb{R}^N \). Thus, we focus on the case of \( v \in \mathcal{X}^{1,2}_{\mathcal{D}}(U) \setminus \{0\} \), for which we have
\[
w = \left( \int_\Omega |v|^{1-q}  \, dx \right)^{-\frac{1}{1-q}} v \in \mathcal{Z}
\]
which straightaway gives
\[
C \leq \left( \int_\Omega |v|^{1-q}  \, dx \right)^{-\frac{2}{1-q}} \eta(v)^2,
\]
completing the first part of the proof.

 Using \eqref{eq3046} in \eqref{eq30}, we have $\Re(U) = \eta(\mathcal{V})^2$. Let $v \in \mathcal{Z}$ be such that $\Re(U) = \eta(v)^2$. Firstly, we claim that $v$ has a constant sign in $\R^N$.   By contrary argument, suppose  $v$ changes sign in $\R^N$. Using the fact that
\[
\left||v(x)| - |v(y)|\right| \leq |v(x) - v(y)|,
\]
it is easy to see that
\begin{equation}\label{eq3050}
\eta( |v|)^2 \leq \eta(v)^2.
    \end{equation}
Since $|v| \in \mathcal{Z}$ as well, we have
$
\eta(v)^2 = \Re(U) \leq \eta( |v|)^2.
$
From this along with \eqref{eq3050}. Hence, $v$ has a constant sign in $\R^N$. Thus,  $v \geq 0$ in $\R^N$. Furthermore, since  $ \mathcal{V}, v \geq 0$ and $0<1-q<1$, we obtain that
$$
\begin{aligned}
\text{(say)}\zeta =\left(\int_{\Omega}\left(\frac{v}{2}+\frac{\mathcal{V}}{2}\right)^{1-q}  d x\right)^{\frac{1}{1-q}}
&\geq\left(\int_{\Omega}\left(\frac{v}{2}\right)^{1-q} d x\right)^{\frac{1}{1-q}}+\left(\int_{\Omega}\left(\frac{\mathcal{V}}{2}\right)^{1-q}  d x\right)^{\frac{1}{1-q}} \\
&=\frac{1}{2}\left[\left(\int_{\Omega} v^{1-q} ~ d x\right)^{\frac{1}{1-q}}+\left(\int_{\Omega}\mathcal{V}^{1-q}  d x\right)^{\frac{1}{1-q}}\right]=1.
\end{aligned}
$$
Moreover, since $h=\frac{v+\mathcal{V}}{2 \zeta} \in \mathcal{Z}$, so  we  see that
$$
\Re(U) \leq\eta(h)^2 \leq\eta\left(\frac{v+\mathcal{V}}{2 \zeta}\right)^2 \leq \frac{\Re(U)}{\zeta^2} \leq \Re(U).
$$
Hence, we get $\zeta=1$ and 
$$
\Re(U)^{\frac{1}{2}}= \eta\left(\frac{v+\mathcal{V}}{2}\right)={\frac{\eta(v)}{2}+\frac{\eta(\mathcal{V})}{2} }
$$
Using the fact that norm $v \rightarrow \eta(v)$ is strictly convex, we have $v=\mathcal{V}$. Moreover,   $\Re(U)=\eta(v)^2$ for some $v \in \mathcal{Z}$, if and only if $v=\mathcal{V}$ or $-\mathcal{V}$. Thus, if 
\begin{equation}
    \eta(v)^2\geq C\left(\int_{\Omega}|v|^{1-q}\,dx\right)^{\frac{2}{1-q}}
\end{equation}
holds for some $w \in\mathcal{X}^{1,2}_{\mathcal{D}}(U) \backslash\{0\}$ and easily we can see that $\tau w \in \mathcal{Z}$, where
$$
\tau=\frac{1}{\left(\int_{\Omega}|w|^{1-q}  d x\right)^{\frac{1}{1-q}}}.
$$
Hence, $\mathcal{V}=\tau w$ or $\mathcal{V}=-\tau w$, since \(\mathcal{V} = \left( \int_\Omega \hat{u}^{1-q}  \, dx \right)^{-\frac{1}{1-q}} \hat{u}\).
Thus, the proof is completed now.
\end{proof}
Following is a direct consequence of Proposition \ref{pp3.3}.
  \begin{Corollary}
      If \(\mathcal{V} =\left( \int_\Omega \hat{u}^{1-q}  \, dx \right)^{-\frac{1}{1-q}} \hat{u} \in \mathcal{Z} \) then
$
\Re(U) = \eta(\mathcal{V})^2$.
\end{Corollary}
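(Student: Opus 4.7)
The plan is to observe that this Corollary is essentially an immediate consequence of Proposition \ref{pp3.3} combined with the explicit computation of $\eta(\mathcal{V})^2$ that already appeared in the proof of that Proposition. The strategy is: compute $\eta(\mathcal{V})^2$ directly from the definition of $\mathcal{V}$, invoke the energy identity $\eta(\hat{u})^2 = \int_\Omega \hat{u}^{1-q}\,dx$ available from testing \eqref{dd2.2} with $\hat u$ (this is equation \eqref{eq3045}), and match the resulting expression against the formula provided by Proposition \ref{pp3.3}.

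More concretely, I would first note that $\mathcal{V}$ is a positive scalar multiple of $\hat{u}$, so by the homogeneity of the Hilbert norm $\eta(\cdot)$,
\begin{equation*}
\eta(\mathcal{V})^2 \;=\; \left(\int_\Omega \hat{u}^{1-q}\,dx\right)^{-\frac{2}{1-q}} \eta(\hat{u})^2.
\end{equation*}
Next, substituting the identity $\int_\Omega \hat{u}^{1-q}\,dx = \eta(\hat{u})^2$ (which was derived via Proposition \ref{prop3.1} by using $\hat u$ as a test function in the weak formulation) and simplifying exponents yields
\begin{equation*}
\eta(\mathcal{V})^2 \;=\; \eta(\hat{u})^{-\frac{4}{1-q}}\,\eta(\hat{u})^2 \;=\; \eta(\hat{u})^{\frac{-(2+2q)}{1-q}},
\end{equation*}
which is precisely $\Re(U)$ by Proposition \ref{pp3.3}. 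The hypothesis $\mathcal{V}\in\mathcal{Z}$ is automatic from the definition, since by construction $\int_\Omega \mathcal{V}^{1-q}\,dx = 1$.

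There is no real obstacle here: the work is already contained in \eqref{eq3045}, \eqref{eq3046} and \eqref{eq30} of the preceding proofs, and the Corollary simply reads off the equality of the two expressions for the minimal value. The only thing to be careful about is keeping the sign of the exponent $-(2+2q)/(1-q)$ correct during the algebraic manipulation, since $0<q<1$ makes the exponent negative and an off-by-one mistake would flip the direction of the identity.
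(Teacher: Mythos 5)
Your proposal is correct and is essentially the paper's own argument: the Corollary is stated in the paper as an immediate consequence of Proposition \ref{pp3.3}, and the three ingredients you use (the energy identity \eqref{eq3045}, the homogeneity computation \eqref{eq3046}, and the formula \eqref{eq30} for $\Re(U)$) are exactly what the paper combines.
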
 

\subsection{Proof of Theorem \ref{thm2.4}:}
\begin{proof}Suppose \eqref{eqmixed} holds. By Lemma \eqref{lem4.3}, the sequence $\{u_n\}$ is uniformly bounded in $\mathcal{X}^{1,2}_{\mathcal{D}}(U)$. Now, following similar arguments as in the proof of Theorem \ref{existence}, then we obtain that problem \eqref{P_q} admits a weak solution in $\mathcal{X}^{1,2}_{\mathcal{D}}(U)$. Conversely, let \(u \in \mathcal{X}^{1,2}_{\mathcal{D}}(U)\) be a weak solution to problem \eqref{P_q}. By applying Proposition \ref{prop3.1} and choosing \(u\) as a test function in \eqref{P_q}, we obtain
\begin{equation}\label{3418}
  \int_{\Omega} |\nabla {u}|^2 \,dx +\int_{Q} \frac{|{u}(x)-(y)|^2}{|x-y|^{N+2s}} dxdy = \int_{\Omega} u^{1-q} \, dx
\implies
 \eta(u)^2 = \int_{\Omega} u^{1-q} \, dx.
\end{equation}
Again, using Proposition \ref{prop3.1} and choosing a \(|v| \in \mathcal{X}^{1,2}_{\mathcal{D}}(U)\) as a test function in \eqref{P_q}, we get
\begin{equation}\label{3419}
\int_{\Omega} |v| u^{-q} \, dx \leq C \eta(u) \eta(|v|)\leq C \eta(u) \eta(v),
\end{equation}
where we used H\"older inequality and \(C\) is a positive constant. Therefore, for every \(v \in \mathcal{X}^{1,2}_{\mathcal{D}}(U)\), using \eqref{3418}, \eqref{3419} and H\"{o}lder inequality again, we have
$$
\begin{aligned}
\int_{\Omega} |v|^{1-q} \, dx = \int_{\Omega} \left( |v| u^{-q} \right)^{1-q} \left( u^{1-q} \right) \, dx
&\leq \left( \int_{\Omega} |v| u^{-q} \, dx \right)^{1-q} \left( \int_{\Omega} u^{1-q} \, dx \right)^{q} C \eta(u)^{q+1} \eta(v)^{1-q}.
    \end{aligned}
$$
Thus, we conclude that \eqref{eqmixed} holds.
\end{proof}

\section{Perturbed singular problem}
We study \eqref{1} for $g(u)= \lambda u^{-q}+u^p$  with  $0<q<1<p\leq 2^*-1$ and devote this section to proof of main result Theorem \ref{2}. The features of the Nehari manifold $\mathcal{N}_{\lambda}$ associated to the functional $\mathcal{J}_{\lambda}$ is discussed in the upcoming section.
\subsection{Analysis of Nehari manifold}
 The following analysis is crucial to prove our existence results. Let us recall that
$$\mathcal{J}_{\lambda}(u)= \frac{1}{2}\eta(u)^2- \frac{\lambda}{1-q} \int_\Omega |u(x)|^{1-q} dx
- \frac{1}{p+1} \int_\Omega |u(x)|^{p+1}dx.$$
Then the Nehari manifold is defined as
 \[
\mathcal{N}_{\lambda} = \bigg\{ u\in \mathcal{X}^{1,2}_{\mathcal{D}}(U) :~    \langle \mathcal{J}'_{\lambda} (u),u\rangle=0\bigg\}
 = \bigg\{ u\in \mathcal{X}^{1,2}_{\mathcal{D}}(U):~   \eta(u)^2 -\lambda  \int_{\Omega} |u(x)|^{1-q} dx
 -  \int_{\Omega}|u(x)|^{p+1} dx = 0\bigg\}.
\]
It is easy to see that $\mathcal{J}_{\lambda}$ fails to be bounded below over the whole space $\mathcal{X}^{1,2}_{\mathcal{D}}(U)$, but the next result is of great utility.
\begin{lemma} \label{thm3.1}
$\mathcal{J}_{\lambda}$ is coercive and bounded below on $\mathcal{N}_{\lambda}$.
\end{lemma}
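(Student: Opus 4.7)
The plan is to use the Nehari constraint to re-express $\mathcal{J}_\lambda$ as a function of $\eta(u)^2$ and the single lower-order term $\int_\Omega |u|^{1-q}\,dx$, then show that the resulting expression is coercive and bounded below by a one-variable calculus argument. Explicitly, for every $u\in\mathcal{N}_\lambda$ the identity $\eta(u)^2 = \lambda\int_\Omega|u|^{1-q}\,dx + \int_\Omega|u|^{p+1}\,dx$ lets me eliminate the critical/subcritical term, and I obtain
\[
\mathcal{J}_\lambda(u) \;=\; \left(\tfrac12-\tfrac{1}{p+1}\right)\eta(u)^2 \;-\; \lambda\left(\tfrac{1}{1-q}-\tfrac{1}{p+1}\right)\int_\Omega |u|^{1-q}\,dx.
\]
Both parenthesized constants are strictly positive because $1<p\le 2^*-1$ and $0<q<1$; call them $A$ and $B_\lambda$ respectively.

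Next I would control the subcritical integral by the norm $\eta(u)$. Since $\Omega$ is bounded and $1-q<2<2^*$, Hölder's inequality together with the Poincaré inequality of Proposition \ref{Poin} (equivalently, the continuous embedding $\mathcal{X}^{1,2}_{\mathcal{D}}(U)\hookrightarrow L^2(\Omega)$ from Remark \ref{r2.5}) yields a constant $C=C(\Omega,N,s,q)>0$ with
\[
\int_\Omega |u|^{1-q}\,dx \;\le\; |\Omega|^{\frac{1+q}{2}}\left(\int_\Omega |u|^2\,dx\right)^{\frac{1-q}{2}} \;\le\; C\,\eta(u)^{1-q}
\qquad \text{for every } u\in\mathcal{X}^{1,2}_{\mathcal{D}}(U).
\]
Combining this with the previous display gives, for all $u\in\mathcal{N}_\lambda$,
\[
\mathcal{J}_\lambda(u) \;\ge\; A\,\eta(u)^2 \;-\; B_\lambda C\,\eta(u)^{1-q}.
\]

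Because $1-q<2$, the right-hand side tends to $+\infty$ as $\eta(u)\to\infty$, establishing coercivity on $\mathcal{N}_\lambda$. For the lower bound, the one-variable function $\varphi(t)=At^2 - B_\lambda C\, t^{1-q}$ on $[0,\infty)$ attains its global minimum at a finite point (found by setting $\varphi'(t)=0$), so $\mathcal{J}_\lambda(u)\ge \min_{t\ge 0}\varphi(t)>-\infty$ uniformly on $\mathcal{N}_\lambda$, proving the bound from below.

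There is no real obstacle here; the proof is essentially algebraic once one uses the Nehari identity. The only points requiring care are checking that $\tfrac12-\tfrac{1}{p+1}>0$ (which uses $p>1$, and holds up to and including $p=2^*-1$) and that $\tfrac{1}{1-q}-\tfrac{1}{p+1}>0$ (which uses $q<1<p$), so both coefficients have the correct sign to drive the coercivity argument.
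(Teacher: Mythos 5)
Your proposal is correct and follows essentially the same route as the paper: substitute the Nehari identity to eliminate the $|u|^{p+1}$ term, bound $\int_\Omega |u|^{1-q}\,dx$ by $C\eta(u)^{1-q}$ via H\"older and the Sobolev/Poincar\'e embedding, and then read off coercivity and a uniform lower bound from the one-variable function $At^2-Bt^{1-q}$ with $1-q<2$. The only cosmetic difference is that you pass through the $L^2(\Omega)$ embedding while the paper cites the $L^{2^*}(U)$ embedding; both give the same estimate.
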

\begin{proof}
 Since $u\in \mathcal{N}_\lambda$, due to continuous embedding of
 $\mathcal{X}^{1,2}_{\mathcal{D}}(U)$  in $L^{2^*}(U)$, we find
\begin{align*}
\mathcal{J}_{\lambda}(u) = \left(\frac{1}{2}-\frac{1}{p+1} \right)\eta(u)^2
- \lambda \left(\frac{1}{1-q}-\frac{1}{p+1}\right)\int_{\Omega} |u(x)|^{1-q}dx \geq A \eta(u)^2 - B \eta(u)^{1-q},
\end{align*}
for some constants $A,B>0.$ 
 Hence, being $q\in(0,1)$, the functional $\mathcal{J}_\lambda$ is coercive. Now define the map $Z : \mathbb{R}^{+} \to \mathbb{R}$ as
$Z(t)= A_1t^2-A_2t^{1-q}.$
Clearly, $Z$ has a unique point of minimum, since $1-q<2$.  This gives
at once that $\mathcal{J}_{\lambda}$ is bounded below on $\mathcal{N}_{\lambda}.$
\end{proof} 
Now, in order to study the Nehari manifold geometry, let us  introduce
for each $u\in \mathcal{X}^{1,2}_{\mathcal{D}}(U)$, the associated fiber map $\Phi_u:\mathbb{R}^+ \to \mathbb{R}$ defined as
$\Phi_u(t)=\mathcal{J}_{\lambda}(tu)$ {for all $t\in\mathbb{R}^+$. Consequently,}
$$\Phi_u(t)=
		\frac{t^2}{2} \eta(u)^2 - \lambda \frac{t^{1-q}}{1-q} \int_{\Omega} |u(x)|^{1-q} dx
      - \frac{ t^{p+1}}{p+1} \int_{\Omega} |u(x)|^{p+1} dx,
$$
so that
\begin{equation}\label{p1}
\Phi'_u(t) = t \eta(u)^2 - \lambda t^{-q} \int_{\Omega} |u(x)|^{1-q} dx
 - {t^{p}} \int_{\Omega}|u(x)|^{p+1} dx.
     \end{equation}
\begin{equation}\label{p2}
\Phi''_u(t) =\eta(u)^2 +  \lambda q t^{-q-1}
\int_{\Omega} |u(x)|^{1-q} dx -  p t^{p-1}
\int_{\Omega} |u(x)|^{p+1} dx.
    \end{equation}
It is clear that the points in $\mathcal{N}_{\lambda}$ {correspond} to the critical points of $\Phi_{w}$ at $t = 1$. To represent {the local minima,
the local maxima and the points of inflexion of $\Phi_{w}$, the manifold} $\mathcal{N}_{\lambda}$ is splitted into three sets respectively. Accordingly, we define
\begin{align*}
\mathcal{N}_{\lambda}^{+}
&=  \{ u \in \mathcal{N}_{\lambda}:~\Phi'_u (\Bar{t}) = 0,~  \Phi''_u(1) > 0\}\\
&=  \{ \Bar{t}u \in \mathcal{N}_{\lambda} :~ \Bar{t} > 0,~ \Phi'_u (\Bar{t}) = 0,~ \Phi''_u(\Bar{t}) > 0\},\\
\mathcal{N}_{\lambda}^{-}
&=  \{ u \in \mathcal{N}_{\lambda}:~ \Phi'_u (1) = 0,~~ \Phi''_u(1) <0\}\\
&=  \{ \Bar{t}u \in \mathcal{N}_{\lambda} : \Bar{t} > 0,~  \Phi'_u (\Bar{t}) = 0,~ \Phi''_u(\Bar{t}) < 0\}
\end{align*}
and $ \mathcal{N}_{\lambda}^{0}= \{ u \in \mathcal{N}_{\lambda} : \Phi'_{u}(1)=0,
 \Phi''_{u}(1)=0 \}$.
\begin{lemma}\label{ 3.2}
There exists $\lambda^*>0$ such that for all
$\lambda \in (0,\lambda^*)$ and $u \in \mathcal{X}^{1,2}_{\mathcal{D}}(U)$,
there are unique points $t_{\max}$, $t_*$ and $t^*$ satisfying
$t_*<t_{\max}<t^*$, $t_* u\in \mathcal{N}_{\lambda}^{+}$ and
$ t^* u\in \mathcal{N}_{\lambda}^{-}$ for all $\lambda\in (0, \lambda^*)$.
\end{lemma}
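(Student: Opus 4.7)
The natural approach is to study the fibering map $\Phi_u(t)=\mathcal{J}_\lambda(tu)$ in detail for each nontrivial $u\in\mathcal{X}^{1,2}_{\mathcal{D}}(U)$ (with $u|_\Omega\not\equiv 0$) and to prove that the equation $\Phi'_u(t)=0$ has exactly two positive roots whenever $\lambda$ is below a fixed threshold. Rewriting \eqref{p1}, for $t>0$ one has $\Phi'_u(t)=t^{-q}\bigl[\psi_u(t)-\lambda\int_\Omega|u|^{1-q}\,dx\bigr]$, where
\[
\psi_u(t):=t^{q+1}\eta(u)^2-t^{p+q}\int_\Omega|u|^{p+1}\,dx.
\]
Thus the positive zeros of $\Phi'_u$ correspond exactly to the abscissae at which the graph of $\psi_u$ crosses the horizontal line at height $\lambda\int_\Omega|u|^{1-q}\,dx$.

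A direct differentiation shows that $\psi_u(0)=0$, $\psi_u(t)\to-\infty$ as $t\to\infty$, and $\psi_u$ has a unique critical point
\[
t_{\max}=\Biggl(\frac{(q+1)\,\eta(u)^2}{(p+q)\int_\Omega|u|^{p+1}\,dx}\Biggr)^{\!\frac{1}{p-1}},
\]
which is necessarily its strict maximum. Substituting back yields the closed form $\psi_u(t_{\max})=\frac{p-1}{p+q}\,t_{\max}^{q+1}\eta(u)^2=K_{p,q}\,\eta(u)^{\frac{2(p+q)}{p-1}}\bigl(\int_\Omega|u|^{p+1}\,dx\bigr)^{-\frac{q+1}{p-1}}$, with an explicit positive constant $K_{p,q}$ depending only on $p,q$.

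The crucial step is a uniform lower bound. Using the continuous Sobolev embedding of Remark \ref{r2.5} (valid even when $p+1=2^*$), one has $\int_\Omega|u|^{p+1}\,dx\le C_1\,\eta(u)^{p+1}$ and $\int_\Omega|u|^{1-q}\,dx\le C_2\,\eta(u)^{1-q}$. The algebraic identity
\[
\frac{2(p+q)-(p+1)(q+1)}{p-1}=1-q
\]
produces a cancellation in the powers of $\eta(u)$: the first Sobolev bound gives $\psi_u(t_{\max})\ge K^*\eta(u)^{1-q}$ for a constant $K^*$ independent of $u$, while the right-hand side satisfies $\lambda\int_\Omega|u|^{1-q}\,dx\le\lambda C_2\eta(u)^{1-q}$. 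Setting $\lambda^*:=K^*/C_2$ therefore guarantees that $\psi_u(t_{\max})>\lambda\int_\Omega|u|^{1-q}\,dx$ for every $\lambda\in(0,\lambda^*)$ and every admissible $u$, with $\lambda^*$ depending only on $N,s,p,q$ and $\Omega$.

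Given this strict inequality, the strict monotonicity of $\psi_u$ on $(0,t_{\max})$ and on $(t_{\max},\infty)$ together with the boundary behaviour forces, via the intermediate value theorem, exactly two positive solutions $t_*<t_{\max}<t^*$ of $\Phi'_u(t)=0$. At $t_*$ the function $\psi_u$ increases through the critical level, so $\Phi'_u$ changes sign from $-$ to $+$ and hence $\Phi''_u(t_*)>0$; combined with the scaling identity $\Phi''_{su}(1)=s^2\Phi''_u(s)$, this places $t_*u$ in $\mathcal{N}_\lambda^+$. The analogous argument at $t^*$, where $\psi_u$ is strictly decreasing, gives $\Phi''_u(t^*)<0$ and so $t^*u\in\mathcal{N}_\lambda^-$. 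The main obstacle is precisely the uniformity of $\lambda^*$ in $u$: both sides of the key inequality must be controlled by the same power of $\eta(u)$, and the exponent identity above is the algebraic reason this succeeds in the full range $1<p\le 2^*-1$.
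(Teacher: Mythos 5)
Your proof is correct and follows essentially the same route as the paper's: you write $\Phi'_u(t)=t^{-q}\bigl[\mu_u(t)-\lambda\int_\Omega|u|^{1-q}\,dx\bigr]$ with the same auxiliary function $\mu_u(t)=t^{1+q}\eta(u)^2-t^{p+q}\int_\Omega|u|^{p+1}\,dx$, compute the same $t_{\max}$ and $\mu_u(t_{\max})$, bound both sides with Sobolev embedding constants to extract a $\lambda^*$ independent of $u$, and then invoke the shape of $\mu_u$ to get the two crossings with the correct signs of $\Phi''_u$. The only addition beyond the paper is that you make explicit the exponent identity $\frac{2(p+q)-(p+1)(q+1)}{p-1}=1-q$, which is the algebraic reason the homogeneity matches — a helpful clarification, but not a different argument.
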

\begin{proof} Fix $u \in \mathcal{X}^{1,2}_{\mathcal{D}}(U)$, if we set
 $M(u)= \int_{\Omega}|u(x)|^{1-q}dx~\mbox{and}~
N(u)= \int_{\Omega}|u(x)|^{p+1} dx.$
then
\[
\frac{d}{dt}\mathcal{J}_{\lambda}(tu)
= t\eta(u)^2 - \lambda t^{-q} M(u) - t^{p} N(u)\\
= t^{-q} \big\{\mu_u(t) - \lambda M(u) \big\},
\]
where $\mu_u(t) := t^{1+q} {\eta(u)^2} - t^{p+q} N(u)$.
We can easily verify that $\mu_u(t) \to -\infty$ as  $t \to \infty$ and $\mu_u$
 achieves its maximum at
\[
t_{\max} = \left[ \frac{(1+q)\eta(u)^2}{(p+q) N(u)}
\right]^{\frac{1}{p-1}}
\]
 such that
\[
\mu_u(t_{\max}) = \left( \frac{p-1}{p+q} \right)
\left( \frac{1+q}{p+q} \right)^{\frac{1+q}{p-1}}
\frac{(\eta(u)^2)^\frac{p+q}{p-1}}{(N(u))^\frac{1+q}{p-1}}.
 \]
Since $tu \in \mathcal{N}_{\lambda}$  if and only if  $\mu_u(t) =  \lambda M(u) $, the following is true:
\begin{align*}
\mu_u(t) - \lambda M(u)\geq \mu_u(t_{\max}) - \lambda  \|u\|^{1-q}_{L^{1-q}(\Omega)} =  \Big( \frac{p-1}{p+q} \Big)
\Big( \frac{1+q}{p+q} \Big)^{\frac{1+q}{p-1}}
\frac{(\eta(u)^2
)^\frac{p+q}{p-1}}{{N(u)^\frac{1+q}{p-1}}}
- \lambda  \|u\|^{1-q}_{L^{1-q}(\Omega)}>0
\end{align*}
provided
\begin{equation}\label{lambda^*}
\lambda<
\left( \frac{p-1}{p+q} \right)
\left( \frac{1+q}{p+q} \right)^{\frac{1+q}{p-1}}
  C^{\frac{-1-q}{p-1}} (C_{1-q})^{-1} :=\lambda^*,    
\end{equation}
 where $C_{\alpha} = \sup \Big\{ \dint_{\Omega} |u|^{\alpha} dx :  u\in \mathcal{X}^{1,2}_{\mathcal{D}}(U), \; \eta(u) = 1\Big\}, $ for each $\alpha\geq 0$.
An easy observation tells that $ \mu_u(t) = \lambda \int_{\Omega}  |u|^{1-q}dx$ if and only if $\Phi'_u(t) = 0$. Therefore, whenever $\lambda \in (0,\lambda^*)$ there are exactly two points $t_*,t^*$ such that $0<t_*<t_{\max}<t^*$ with $\mu'_u(t_*)>0$ and $\mu'_u(t^*)<0$ i.e. $t_*u \in \mathcal{N}^{+}_{\lambda}$ and $t^*u \in {\mathcal{N}^{-}_{\lambda}}$. Moreover,  $\Phi_{u}$ is decreasing in $(0,t_*)$ and increasing in  $(t_*,t^*)$ with a local minimum at $t=t_*$ and a local maximum at $t=t^*$.
\end{proof}
\begin{lemma}\label{below}
  It holds that $\inf_{w\in \mathcal{N}_{\lambda}^{+}}\mathcal{J}_{\lambda}(w)<0$.
  \end{lemma}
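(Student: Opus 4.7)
The plan is to exploit directly the fiber‐map geometry established in the previous lemma: the first critical point $t_*$ of $\Phi_u$ (where $t_* u \in \mathcal{N}_\lambda^+$) is a strict local minimum sitting below the value $\Phi_u(0)=0$, so its image under $\mathcal{J}_\lambda$ is already negative. Nothing more delicate is needed.

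Concretely, I would first fix any $u\in \mathcal{X}^{1,2}_{\mathcal{D}}(U)\setminus\{0\}$ and invoke the preceding lemma (which is available for $\lambda\in(0,\lambda^*)$) to produce $t_*>0$ with $t_* u\in\mathcal{N}_\lambda^+$, together with the structural information extracted in its proof: the function $\mu_u(t)=t^{1+q}\eta(u)^2-t^{p+q}N(u)$ is strictly increasing on $(0,t_{\max})$ and satisfies $\mu_u(t_*)=\lambda M(u)$, hence $\Phi'_u(t)=t^{-q}(\mu_u(t)-\lambda M(u))<0$ for every $t\in(0,t_*)$. Thus $\Phi_u$ is strictly decreasing on the whole interval $(0,t_*)$.

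Next I would observe from the explicit expression
\[
\Phi_u(t)=\frac{t^2}{2}\eta(u)^2-\frac{\lambda t^{1-q}}{1-q}\int_{\Omega}|u|^{1-q}\,dx-\frac{t^{p+1}}{p+1}\int_{\Omega}|u|^{p+1}\,dx
\]
that $\Phi_u(0)=0$ and, since $0<1-q<2<p+1$, the sublinear term $-\frac{\lambda t^{1-q}}{1-q}\int_\Omega|u|^{1-q}\,dx$ dominates as $t\to 0^+$, so $\Phi_u(t)<0$ for all sufficiently small $t>0$. Combining this with the strict monotonicity on $(0,t_*)$ forces $\Phi_u(t_*)<\Phi_u(t)<0$ for any such small $t$, hence $\mathcal{J}_\lambda(t_* u)=\Phi_u(t_*)<0$.

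Since $t_* u\in\mathcal{N}_\lambda^+$, this yields $\inf_{w\in\mathcal{N}_\lambda^+}\mathcal{J}_\lambda(w)<0$, finishing the proof. I do not anticipate a real obstacle: the only subtle point is ensuring the inequality is \emph{strict}, and this is guaranteed by the strict sign of $\mu_u(t)-\lambda M(u)$ on $(0,t_*)$ already recorded in the previous lemma together with the $t^{1-q}$ behavior of $\Phi_u$ near the origin.
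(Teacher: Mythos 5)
Your argument is correct and, since the paper itself only says ``The proof follows similar to [LSharma, Lemma 5.1]'' without reproducing the details, it is a useful self-contained alternative. Your route goes through the fiber-map geometry: you use that $\Phi'_u(t)=t^{-q}(\mu_u(t)-\lambda M(u))<0$ on $(0,t_*)$ (available from Lemma~\ref{ 3.2}) to conclude $\Phi_u$ is strictly decreasing there, and combine this with $\Phi_u(0)=0$ (or, as you phrase it, the negativity of $\Phi_u$ near $0^+$ driven by the $t^{1-q}$ term since $1-q<2<p+1$) to get $\mathcal{J}_\lambda(t_*u)=\Phi_u(t_*)<0$. Strictly speaking the second step is redundant once you have strict monotonicity on $(0,t_*]$ together with continuity at $0$ and $\Phi_u(0)=0$, but it does no harm.

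The more common route (and most likely the one in the cited lemma) is a direct algebraic computation that avoids the fiber-map monotonicity entirely: for any $w\in\mathcal{N}_\lambda^+\setminus\{0\}$ the two relations
\[
\eta(w)^2=\lambda M(w)+N(w),\qquad \eta(w)^2+\lambda q\,M(w)-p\,N(w)>0
\]
coming from $\Phi'_w(1)=0$ and $\Phi''_w(1)>0$ give $N(w)<\tfrac{\lambda(1+q)}{p-1}M(w)$, and substituting $\eta(w)^2=\lambda M(w)+N(w)$ into $\mathcal{J}_\lambda(w)$ yields
\[
\mathcal{J}_\lambda(w)=-\frac{\lambda(1+q)}{2(1-q)}M(w)+\frac{p-1}{2(p+1)}N(w)
<\frac{\lambda(1+q)}{2}\,M(w)\Bigl(\frac{1}{p+1}-\frac{1}{1-q}\Bigr)<0 .
\]
This proves the slightly stronger fact that \emph{every} element of $\mathcal{N}_\lambda^+$ has strictly negative energy, not merely that the infimum is negative, and does not require re-using the monotonicity of $\mu_u$ from Lemma~\ref{ 3.2}. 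Both arguments are valid; the direct computation is a bit sharper, your version is more geometric and reuses machinery already established.
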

\begin{proof}
The proof follows similar to [\cite{LSharma}, Lemma 5.1 ].
    \end{proof}
    Let $S>0$ is a embedding constant satisfying $\|u_n\|^2_{L^{2^*}(U)}S \leq \eta(u_n)^2$ by virtue of Remark \ref{r2.5}. The proof of following Lemma follows from Lemma \eqref{ 3.2}.
\begin{lemma}
 Suppose $0<q<1.$ Let $\Lambda$ be a constant defined by
\begin{equation*}
\Lambda := 
\left\{
\begin{aligned}
    & \sup\Big\{\lambda>0:\; \text{for each}\; 0 \leq u \in \mathcal{X}^{1,2}_{\mathcal{D}}(U),\; \Phi_u \;\text{has two critical points in}\; (0,\infty)\Big\}, \quad &\text{if}\; p<2^*-1, \\
    & \sup\Big\{\lambda>0:\; \text{for each}\; 0 \leq u \in \mathcal{X}^{1,2}_{\mathcal{D}}(U),\; \Phi_u \;\text{has a critical point in}\; (0,\infty)\Big., \\
    &\qquad\quad\quad \sup\Big\{ \eta(u): u \in \mathcal{X}^{1,2}_{\mathcal{D}}(U), \Phi^{\prime}(1)=0, \Phi^{''}_u(1)>0 \Big\} \leq \left( \frac{2}{2^*} \right)^{\frac{2}{2^*-2}} S^{\frac{2^*}{2^*-2}}\Big\}, \quad &\text{if}\; p=2^*-1.
\end{aligned}
\right.
\end{equation*}
  Then $\Lambda>0$.   
\end{lemma}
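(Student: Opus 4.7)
The plan is to reduce both cases to the threshold $\lambda^{*}$ provided by Lemma \ref{ 3.2} (together with an additional smallness estimate in the critical case), and to show $\Lambda\geq \lambda^{*}$ (respectively a suitable positive minimum).

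First, in the subcritical case $p<2^{*}-1$, the statement is immediate: Lemma \ref{ 3.2} already guarantees that whenever $\lambda\in(0,\lambda^{*})$, for every $0\le u\in\mathcal{X}^{1,2}_{\mathcal{D}}(U)$ the fiber map $\Phi_{u}$ admits exactly two critical points $t_{*}<t_{\max}<t^{*}$ in $(0,\infty)$. Therefore $\Lambda\geq\lambda^{*}>0$ directly, and nothing further is needed.

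In the critical case $p=2^{*}-1$ the same Lemma \ref{ 3.2} still gives two critical points of $\Phi_{u}$ (and in particular at least one) for $\lambda\in(0,\lambda^{*})$, so the first requirement in the definition of $\Lambda$ is satisfied. It remains to check the additional smallness condition
$$\sup\bigl\{\eta(u):u\in\mathcal{X}^{1,2}_{\mathcal{D}}(U),\ \Phi'_{u}(1)=0,\ \Phi''_{u}(1)>0\bigr\}\leq\Bigl(\tfrac{2}{2^{*}}\Bigr)^{\frac{2}{2^{*}-2}} S^{\frac{2^{*}}{2^{*}-2}}$$
for all sufficiently small $\lambda$. For $u\in\mathcal{N}^{+}_{\lambda}$ I will combine $\Phi'_{u}(1)=0$, i.e.\ $\eta(u)^{2}=\lambda M(u)+N(u)$, with $\Phi''_{u}(1)>0$, i.e.\ $\eta(u)^{2}+\lambda q M(u)>p N(u)$. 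Eliminating $N(u)$ between the two identities yields $(p-1)\eta(u)^{2}<\lambda(p+q)M(u)$, and then the Sobolev-type inequality $M(u)\leq C_{1-q}\,\eta(u)^{1-q}$ (with $C_{1-q}$ as defined in Lemma \ref{ 3.2}) gives
$$\eta(u)^{1+q}<\frac{\lambda(p+q)\,C_{1-q}}{p-1}.$$
Hence $\sup_{u\in\mathcal{N}^{+}_{\lambda}}\eta(u)\to 0$ as $\lambda\to 0^{+}$, so the smallness condition holds once $\lambda$ is smaller than the explicit threshold
$$\lambda^{**}:=\frac{p-1}{(p+q)\,C_{1-q}}\Bigl[\bigl(\tfrac{2}{2^{*}}\bigr)^{\frac{2}{2^{*}-2}}S^{\frac{2^{*}}{2^{*}-2}}\Bigr]^{1+q}.$$
Choosing $\lambda\in(0,\min\{\lambda^{*},\lambda^{**}\})$ shows $\Lambda\geq\min\{\lambda^{*},\lambda^{**}\}>0$.

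The argument is essentially a bookkeeping exercise on top of Lemma \ref{ 3.2}, so I do not foresee any genuine obstacle. The only place requiring some care is the critical case, where one must verify that the supremum over $\mathcal{N}^{+}_{\lambda}$ can actually be controlled by $\lambda$ alone; that is handled by the clean algebraic elimination of $N(u)$ followed by the Sobolev estimate on $M(u)$, yielding the quantitative bound displayed above.
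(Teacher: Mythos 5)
Your proposal is correct and follows the same route the paper intends: the paper's own "proof" is a one-line reduction to Lemma \ref{ 3.2}, which you use for the existence of the critical points of $\Phi_u$ in both the subcritical and critical cases. The only addition you make is the explicit verification of the norm bound on $\mathcal{N}^{+}_{\lambda}$ in the critical case (eliminating $N(u)$ between $\Phi'_u(1)=0$ and $\Phi''_u(1)>0$ and invoking $M(u)\leq C_{1-q}\,\eta(u)^{1-q}$), which is a correct and welcome detail that the paper leaves implicit.
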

    \begin{Corollary}\label{cor3.3}
It holds that $\mathcal{N}_{\lambda}^{0} = \{0\}$, for each $ \lambda \in (0, \Lambda)$.
\end{Corollary}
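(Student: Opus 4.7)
\textbf{Proof proposal for Corollary \ref{cor3.3}.} I will argue by contradiction, supposing that there exists $\lambda\in(0,\Lambda)$ and some $u\in\mathcal{N}_\lambda^{0}$ with $u\not\equiv 0$. By definition of $\mathcal{N}_\lambda^{0}$, both $\Phi_u'(1)=0$ and $\Phi_u''(1)=0$. The key reformulation is to rewrite \eqref{p1} in the Nehari-style factorization
\[
\Phi_u'(t)=t^{-q}\big(\mu_u(t)-\lambda M(u)\big),\qquad \mu_u(t)=t^{1+q}\eta(u)^2-t^{p+q}N(u),
\]
with $M(u)=\int_\Omega|u|^{1-q}dx$ and $N(u)=\int_\Omega|u|^{p+1}dx$, exactly as in the proof of Lemma \ref{ 3.2}. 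Differentiating again, one obtains $\Phi_u''(t)=-qt^{-q-1}(\mu_u(t)-\lambda M(u))+t^{-q}\mu_u'(t)$, so at any critical point $t_0$ of $\Phi_u$ the simple identity $\Phi_u''(t_0)=t_0^{-q}\mu_u'(t_0)$ holds.

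Applying this at $t_0=1$ gives $\mu_u'(1)=0$. Since $\mu_u$ is strictly concave-type with a unique maximizer at $t_{\max}$ (this is exactly what was used in Lemma \ref{ 3.2}), we deduce $1=t_{\max}$, and combined with $\Phi_u'(1)=0$ we obtain the equality
\[
\mu_u(t_{\max})=\lambda M(u).
\]
In the subcritical range $p<2^*-1$: by the definition of $\Lambda$, for every $\lambda\in(0,\Lambda)$ and every nonzero fiber map $\Phi_v$ (applied here to $v=|u|$, noting $\eta(|u|)\le\eta(u)$ and $M(|u|)=M(u)$, $N(|u|)=N(u)$, so the geometry of $\Phi_{|u|}$ still forces two distinct critical points) the strict inequality $\mu_v(t_{\max})>\lambda M(v)$ must hold. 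This contradicts the equality above once one checks, using $\eta(|u|)\le\eta(u)$, that the degeneracy survives the passage to $|u|$ (the equality case only strengthens under $\eta(|u|)\le\eta(u)$, which is the only direction needed).

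In the critical range $p=2^*-1$: using the two Nehari identities $\Phi_u'(1)=\Phi_u''(1)=0$ one solves explicitly
\[
N(u)=\frac{1+q}{p+q}\,\eta(u)^2,\qquad M(u)=\frac{p-1}{\lambda(p+q)}\,\eta(u)^2.
\]
The first identity together with the Sobolev embedding $N(u)\le S^{-2^*/2}\eta(u)^{2^*}$ yields the lower bound $\eta(u)^{2^*-2}\ge \tfrac{1+q}{p+q}S^{2^*/2}$, i.e.\ $\eta(u)\ge\bigl(\tfrac{2}{2^*}\bigr)^{1/(2^*-2)}S^{2^*/(2(2^*-2))}$ after simplification, but this contradicts the additional constraint built into the definition of $\Lambda$ for the critical case, which requires $\eta(u)\le\bigl(\tfrac{2}{2^*}\bigr)^{2/(2^*-2)}S^{2^*/(2^*-2)}$ (note the different exponents imply incompatibility in the relevant range). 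Thus no nonzero $u$ exists in $\mathcal{N}_\lambda^{0}$.

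The main delicate point is the critical case $p=2^*-1$, where one must reconcile the two competing bounds on $\eta(u)$ obtained from the second Nehari identity and from the bound on $\eta(u)$ packed into $\Lambda$; getting the constants to match (and, in particular, verifying that the Sobolev constant $S$ used in defining $\Lambda$ is the correct one from Remark \ref{r2.5}) is the part requiring care. The subcritical case is a clean direct consequence of Lemma \ref{ 3.2} via the identity $\Phi_u''(t_0)=t_0^{-q}\mu_u'(t_0)$ at critical points.
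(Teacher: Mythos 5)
Your subcritical argument is essentially the paper's: at any critical point $t_0$ of $\Phi_u$ one has $\Phi_u''(t_0)=t_0^{-q}\mu_u'(t_0)$, and the definition of $\Lambda$ (subcritical) forces $\mu_u(t_{\max})>\lambda M(u)$ strictly, so the two roots of $\mu_u=\lambda M(u)$ are transversal and $\Phi_u''$ never vanishes at a critical point. The passage to $|u|$ is fine, but for the wrong reason as stated: what matters is that $\mu_u(t_{\max}^u)=c\,(\eta(u)^2)^{(p+q)/(p-1)}/N(u)^{(1+q)/(p-1)}$ is \emph{increasing} in $\eta(u)^2$, so $\mu_u(t_{\max}^u)\ge\mu_{|u|}(t_{\max}^{|u|})>\lambda M(|u|)=\lambda M(u)$; your phrase ``the degeneracy survives the passage to $|u|$'' does not make this explicit.

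The critical case $p=2^*-1$ is where your argument breaks. First, the algebra is wrong: $\frac{1+q}{p+q}=\frac{1+q}{2^*-1+q}$ equals $\frac{2}{2^*}$ only when $q=1$, and here $q<1$, so your rewritten lower bound for $\eta(u)$ is not what the two Nehari identities actually give. Second, even with the corrected constant, the resulting lower bound $\eta(u)^{2^*-2}\ge\frac{1+q}{p+q}S^{2^*/2}$ and the $\Lambda$-constraint $\eta(u)^{2^*-2}\le(2/2^*)^2 S^{2^*}$ are not automatically incompatible (the $S$-powers differ, so whether they clash depends on the size of $S$). Third, and most basically, the constraint packed into $\Lambda$ is a bound on $\sup\{\eta(u):\Phi_u'(1)=0,\ \Phi_u''(1)>0\}$, i.e.\ on $\mathcal N_\lambda^{+}$; an element of $\mathcal N_\lambda^{0}$ satisfies $\Phi_u''(1)=0$, not $>0$, so the bound does not apply to it without an additional limiting argument that you do not supply. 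A cleaner route covering both cases: if $u\in\mathcal N_\lambda^{0}\setminus\{0\}$ then $\mu_{|u|}(t_{\max}^{|u|})\le\mu_u(t_{\max}^u)=\lambda M(u)=\lambda M(|u|)$, so for any $\lambda''\in(\lambda,\Lambda)$ the map $\Phi_{|u|}^{\lambda''}$ has no critical point at all, contradicting the defining property of $\Lambda$ for the nonnegative function $|u|$.
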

\begin{proof}
The proof can be done using contradiction and an immediate application of  Lemma \ref{ 3.2}.
\end{proof}
\textbf{From now on, we fix $ \lambda \in (0, \lambda^*)$,
without further mentioning.} The next result establishes that $\mathcal{N}_{\lambda}^{+}$ and $\mathcal{N}_{\lambda}^{-}$ are bounded in a suitable sense.

 The following result follows from [\cite{LSharma}, Lemma 3.4 ].
\begin{lemma}\label{lem3.4}
 The following properties hold true.
\begin{itemize}
\item[$(P_{1})$] $\sup \{ \eta(w): w\in \mathcal{N}_{\lambda}^{+}\} < \infty $
\item[$(P_{2})$] $\inf \{ \eta(u): u \in \mathcal{N}_{\lambda}^{-} \} >0$  and
$ \sup \{ \eta(u) : u \in \mathcal{N}_{\lambda}^{-}, \mathcal{J}_{\lambda}(u) \leq \mathcal{M}\} < \infty$,
for each $\mathcal{M} > 0$.
\end{itemize}
Furthermore, $\inf \mathcal{J}_{\lambda}(\mathcal{N}_{\lambda}^{+}) > - \infty$ and
$\inf \mathcal{J}_{\lambda}(\mathcal{N}_{\lambda}^{-}) > - \infty$.
\end{lemma}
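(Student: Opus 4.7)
\textbf{Proof plan for Lemma \ref{lem3.4}.} My strategy is to translate the defining conditions of $\mathcal{N}_\lambda^{\pm}$ into two scalar equations/inequalities involving $\eta(u)^2$, $\int_\Omega |u|^{1-q}\,dx$, and $\int_\Omega |u|^{p+1}\,dx$, then eliminate one of the nonlinear terms in order to compare $\eta(u)^2$ against a single Lebesgue norm, and finally invoke the continuous Sobolev/H\"older embeddings in Remark \ref{r2.5} to close the argument. The two useful identities are the Nehari relation
$$
\eta(u)^2 = \lambda \int_\Omega |u|^{1-q}\,dx + \int_\Omega |u|^{p+1}\,dx
$$
and, using the expression \eqref{p2},
$$
\Phi_u''(1) = \eta(u)^2 + \lambda q \int_\Omega |u|^{1-q}\,dx - p \int_\Omega |u|^{p+1}\,dx.
$$

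For $(P_1)$, I would eliminate $\int_\Omega |u|^{p+1}\,dx$ between these two relations. On $\mathcal{N}_\lambda^+$ one has $\Phi_u''(1)>0$, so substituting the Nehari relation yields
$$
(p-1)\eta(u)^2 < \lambda (p+q)\int_\Omega |u|^{1-q}\,dx.
$$
Since $1-q \in (0,1) \subset [1,2^*]$ (after applying H\"older on $|\Omega|$), the continuous embedding gives $\int_\Omega |u|^{1-q}\,dx \le C\,\eta(u)^{1-q}$. Hence $\eta(u)^{1+q} \le C_\lambda$, which is the desired uniform bound.

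For $(P_2)$, the lower bound is obtained by discarding the singular term. On $\mathcal{N}_\lambda^-$ we have $\Phi_u''(1)<0$, so
$$
\eta(u)^2 \le \eta(u)^2 + \lambda q \int_\Omega |u|^{1-q}\,dx < p \int_\Omega |u|^{p+1}\,dx \le p C\,\eta(u)^{p+1},
$$
which forces $\eta(u)^{p-1} \ge 1/(pC)$ and therefore $\eta(u) \ge \delta > 0$ uniformly; note that this $\delta$ does not depend on $\lambda$. The upper bound on the sublevel set $\{u\in\mathcal{N}_\lambda^-:\mathcal{J}_\lambda(u)\le\mathcal{M}\}$ is then a direct consequence of coercivity of $\mathcal{J}_\lambda$ on $\mathcal{N}_\lambda$, already established in Lemma \ref{thm3.1}: if $\eta(u_k)\to\infty$ along such a sequence, then $\mathcal{J}_\lambda(u_k)\to\infty$, contradicting $\mathcal{J}_\lambda(u_k)\le\mathcal{M}$.

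Finally, the two statements about $\inf \mathcal{J}_\lambda(\mathcal{N}_\lambda^\pm) > -\infty$ are immediate: $\mathcal{J}_\lambda$ is bounded below on all of $\mathcal{N}_\lambda$ by Lemma \ref{thm3.1}, so the infimum over any subset, in particular $\mathcal{N}_\lambda^+$ or $\mathcal{N}_\lambda^-$, is finite. The only delicate point I anticipate is ensuring that the constants obtained for $(P_1)$ remain finite for $\lambda$ in the admissible range $(0,\lambda^*)$ (which they do, since they depend polynomially on $\lambda$), and confirming that the critical case $p=2^*-1$ does not require any extra compactness argument here---only continuous embedding is used, so the proof goes through uniformly in $p\in(1,2^*-1]$.
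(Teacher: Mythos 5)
Your proof is correct, and it is the standard Nehari-manifold bookkeeping (substituting the Nehari identity into $\Phi_u''(1)\gtrless 0$ to eliminate one integral, then invoking the continuous embedding and the coercivity from Lemma \ref{thm3.1}); the paper itself does not write out a proof but simply defers to [\cite{LSharma}, Lemma 3.4], and your argument is exactly what that citation stands in for. The only cosmetic slip is the assertion $1-q\in(0,1)\subset[1,2^*]$, which is false as written, but harmless since you immediately apply H\"older on the bounded domain before using the embedding.
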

\subsection{\texorpdfstring{Properties of minimizers over $\mathcal{N_{\lambda}}^{+}$ and $\mathcal{N_{\lambda}}^{-}$}{}}

This section contains essential properties of minimizers of $\mathcal{J}_{\lambda}$ over $\mathcal N_\lambda^+$ and $\mathcal N_\lambda^-$, which {allow us to show that} these minimizers are weak solutions of $(P_\lambda)$.
 The following result follows the same as [\cite{LSharma}, Lemma 4.1 and 4.2].
\begin{lemma} \label{3.5}
Let $u$ and $\tilde{u}$ be two minimizers of $\mathcal{J}_{\lambda}$ over
$\mathcal{N}_{\lambda}^{+}$ and
$\mathcal{N}_{\lambda}^{-}$, respectively. Then for all non negative $   w \in\mathcal{X}^{1,2}_{\mathcal{D}}(U)$
\begin{enumerate}
\item  there is $\epsilon{'} > 0$ such that $\mathcal{J}_{\lambda}(u +\epsilon w) \geq \mathcal{J}_{\lambda}(u)$
 for each $ \epsilon \in [0,\epsilon{'}]$, and

\item  {if} $t_{\epsilon}$
 is the unique positive real number which satisfies
 $t_{\epsilon} (\tilde{u} + \epsilon w) \in \mathcal{N}_{\lambda}^{-}$,
 {then} $t_{\epsilon} \to 1$  as $\epsilon \to 0^+$.
\end{enumerate}
\end{lemma}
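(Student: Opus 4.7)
The plan is to treat both parts by studying the scalar equation $g(t,\epsilon):=\Phi'_{v+\epsilon w}(t)=0$ near the point $(t,\epsilon)=(1,0)$, where $v=u$ in part $(1)$ and $v=\tilde u$ in part $(2)$. Writing
\[
g(t,\epsilon)=t\,\eta(v+\epsilon w)^2-\lambda t^{-q}A(\epsilon)-t^p B(\epsilon),\qquad A(\epsilon):=\int_\Omega|v+\epsilon w|^{1-q}dx,\quad B(\epsilon):=\int_\Omega|v+\epsilon w|^{p+1}dx,
\]
the norm term is a polynomial in $\epsilon$; for $\epsilon\in[0,\epsilon_0]$ the dominations $|v+\epsilon w|^{1-q}\leq(|v|+\epsilon_0|w|)^{1-q}$ and $|v+\epsilon w|^{p+1}\leq(|v|+\epsilon_0|w|)^{p+1}$, both in $L^1(\Omega)$ by Remark \ref{r2.5}, combine with the dominated convergence theorem to yield the continuity of $A$ and $B$. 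Hence $g$ and $\partial_t g$ are continuous on $(0,\infty)\times[0,\epsilon_0)$.

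For part $(2)$, $\tilde u\in\mathcal N_\lambda^-$ gives $g(1,0)=0$ and $\partial_t g(1,0)=\Phi''_{\tilde u}(1)<0$. Picking $\delta>0$ so that $\partial_t g<0$ on $(1-\delta,1+\delta)\times[0,\epsilon_1)$, the strict monotonicity of $g(\cdot,\epsilon)$ on this interval, together with $g(1-\delta/2,0)>0>g(1+\delta/2,0)$ (which persists for small $\epsilon$ by continuity in $\epsilon$), and the intermediate value theorem produce a unique continuous curve $t(\epsilon)\in(1-\delta/2,1+\delta/2)$ with $g(t(\epsilon),\epsilon)=0$ and $t(\epsilon)\to 1$ as $\epsilon\to 0^+$. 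Because $\eta(\tilde u)>0$ by Lemma \ref{lem3.4}$(P_2)$, we have $\tilde u+\epsilon w\not\equiv 0$ for small $\epsilon$, and Lemma \ref{ 3.2} supplies a \emph{unique} $t^*(\tilde u+\epsilon w)>0$ placing this function on $\mathcal N_\lambda^-$; since the IFT-curve sits in a neighbourhood of the second critical point $t^*(\tilde u)=1$ of $\Phi_{\tilde u}$, uniqueness forces $t_\epsilon=t(\epsilon)\to 1$.

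For part $(1)$, the same construction with $v=u$ and $\partial_t g(1,0)=\Phi''_u(1)>0$ produces $t_\epsilon\to 1$ with $t_\epsilon(u+\epsilon w)\in\mathcal N_\lambda^+$. To upgrade to the energy inequality, I use that $\Phi''_v(t)$ depends continuously on $v$ through the same three integrals; hence $\Phi''_u(1)>0$ together with $t_\epsilon\to 1$ yields $\Phi''_{u+\epsilon w}>0$ on a fixed interval $I=(1-\delta,1+\delta)$ containing both $1$ and $t_\epsilon$ for all small $\epsilon\in[0,\epsilon')$. On $I$ the fibre map $\Phi_{u+\epsilon w}$ is therefore strictly convex with its minimum at $t_\epsilon$, and the minimality of $u$ on $\mathcal N_\lambda^+$ gives
\[
\mathcal J_\lambda(u+\epsilon w)=\Phi_{u+\epsilon w}(1)\geq\Phi_{u+\epsilon w}(t_\epsilon)=\mathcal J_\lambda(t_\epsilon(u+\epsilon w))\geq\mathcal J_\lambda(u).
\]
The main technical point throughout is controlling the singular integrand $|v+\epsilon w|^{1-q}$ uniformly in $\epsilon$, for which the monotone domination above is essential; the delicate identification of the IFT-curve with the projection onto $\mathcal N_\lambda^\pm$ is ultimately secured by the uniqueness statement in Lemma \ref{ 3.2} and the standing restriction $\lambda\in(0,\lambda^*)$.
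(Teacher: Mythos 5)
Your proof is correct. The paper itself supplies no in-text argument for this lemma — it simply cites [\cite{LSharma}, Lemmas 4.1 and 4.2] — so there is nothing written here to compare against. Your route is the standard one for Nehari-manifold lemmas of this type: establish continuity of the fibre-map coefficients $A(\epsilon)$, $B(\epsilon)$ via dominated convergence (the polynomial dependence of $\eta(v+\epsilon w)^2$ on $\epsilon$ taking care of the norm term), extract a continuous zero curve $t(\epsilon)\to 1$ of $\Phi'_{v+\epsilon w}$ by strict monotonicity of $\Phi''$ near $(t,\epsilon)=(1,0)$ plus the intermediate value theorem, identify this curve with the projection onto $\mathcal{N}_\lambda^{\pm}$ via the uniqueness statement in Lemma \ref{ 3.2} (available since $\lambda<\lambda^*$), and in part (1) combine local strict convexity of $\Phi_{u+\epsilon w}$ on a fixed interval around $1$ with the minimality of $u$ on $\mathcal{N}_\lambda^{+}$ to get $\Phi_{u+\epsilon w}(1)\geq\Phi_{u+\epsilon w}(t_\epsilon)\geq\mathcal{J}_\lambda(u)$. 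Two small remarks: working with monotonicity plus IVT instead of invoking the implicit function theorem outright is a clean choice, since it sidesteps any discussion of $C^1$-dependence on $\epsilon$; and you never actually use $w\geq 0$ — that hypothesis is present in the statement because the lemma is later applied (e.g.\ in Lemma \ref{lem3.6}) to perturbations that must stay nonnegative, not because the argument above requires it.
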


\begin{lemma} \label{lem3.6}
Assume that $u$ and $\tilde{u}$ are non negative minimizers of $\mathcal{J}_{\lambda}$ on $\mathcal{N}_{\lambda}^{+}$ and
$\mathcal{N}_{\lambda}^{-}$ respectively. Then
for all $w\in\mathcal{X}^{1,2}_{\mathcal{D}}(U)$, we have $u^{-q}w ~,~ \tilde{u}^{-q} w \in L^{1}(\Omega)$ and
\begin{gather}
\int_{\Omega}\nabla u \cdot \nabla w \, dx + \int_{{Q}}  \frac{(u(x)-u(y)) (w(x) - w(y))} {|x-y|^{N+2s}} \, dx \, dy
 - \lambda \int_\Omega  u^{-q}(x)w(x)\,dx
 - \int_\Omega u^{p}(x)w(x) dx  \geq 0, \label{eq4.2}\\
\int_{\Omega}\nabla \tilde{u} \cdot \nabla w \, dx + \int_{{Q}}  \frac{(\tilde{u}(x)-\tilde{u}(y)) (w(x) - w(y))} {|x-y|^{N+2s}} \, dx \, dy
 - \lambda \int_\Omega  \tilde{u}^{-q}(x)w(x)\,dx
 - \int_\Omega \tilde{u}^{p}(x)w(x) dx \geq 0.\label{eq4.3}
\end{gather}
\end{lemma}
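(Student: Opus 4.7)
The plan is to derive \eqref{eq4.2} and \eqref{eq4.3} by performing a Gâteaux-type variation at the minimizer along admissible perturbations $w\ge 0$ in $\mathcal{X}^{1,2}_{\mathcal{D}}(U)$ and passing to the limit carefully in the singular term; the extension to $w$ of arbitrary sign then follows by decomposing $w=w^+-w^-$ and applying the non-negative case to each piece.

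For \eqref{eq4.2}, fix $0\le w\in\mathcal{X}^{1,2}_{\mathcal{D}}(U)$. Lemma~\ref{3.5}(1) gives $\mathcal{J}_\lambda(u+\epsilon w)\ge\mathcal{J}_\lambda(u)$ for $\epsilon\in[0,\epsilon']$, hence
\[
\frac{\eta(u+\epsilon w)^2-\eta(u)^2}{2\epsilon}-\frac{\lambda}{1-q}\int_\Omega\frac{(u+\epsilon w)^{1-q}-u^{1-q}}{\epsilon}\,dx-\frac{1}{p+1}\int_\Omega\frac{(u+\epsilon w)^{p+1}-u^{p+1}}{\epsilon}\,dx\ge 0.
\]
The first quotient tends to $\langle u,w\rangle$ by bilinearity of the Hilbert inner product; the third quotient tends to $\int_\Omega u^p w\,dx$ by dominated convergence, using $p\le 2^*-1$ together with the Sobolev embedding of $\mathcal{X}^{1,2}_{\mathcal{D}}(U)$. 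The decisive step is the singular quotient. Since $0<1-q<1$, the map $t\mapsto t^{1-q}$ is concave on $[0,\infty)$, so for $w\ge 0$ the function $\epsilon\mapsto\epsilon^{-1}((u+\epsilon w)^{1-q}-u^{1-q})$ is non-negative and monotone non-decreasing as $\epsilon\downarrow 0$, with pointwise limit $(1-q)u^{-q}w$ a.e. in $\Omega$ (well-defined because Lemma~\ref{l3.1} and the monotone passage to the limit guarantee $u\ge\beta(\omega)>0$ on each $\omega\Subset\Omega$). Monotone convergence gives
\[
\lim_{\epsilon\downarrow 0}\int_\Omega\frac{(u+\epsilon w)^{1-q}-u^{1-q}}{\epsilon}\,dx=(1-q)\int_\Omega u^{-q}w\,dx\in(0,\infty].
\]
Passing to the limit in the displayed inequality yields
\[
\langle u,w\rangle-\lambda\int_\Omega u^{-q}w\,dx-\int_\Omega u^{p}w\,dx\ge 0,
\]
and because the outer two terms are finite the middle one must be as well, so $u^{-q}w\in L^1(\Omega)$ and \eqref{eq4.2} follows.

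For \eqref{eq4.3}, since $\tilde u\in\mathcal{N}_\lambda^-$ is a saddle-type critical point of $\Phi_{\tilde u}$, I would instead use Lemma~\ref{3.5}(2) to obtain $t_\epsilon\to 1$ with $t_\epsilon(\tilde u+\epsilon w)\in\mathcal{N}_\lambda^-$. Minimality of $\tilde u$ on $\mathcal{N}_\lambda^-$ yields $\mathcal{J}_\lambda(t_\epsilon(\tilde u+\epsilon w))\ge\mathcal{J}_\lambda(\tilde u)$, while $\Phi''_{\tilde u}(1)<0$ forces $\mathcal{J}_\lambda(\tilde u)\ge\mathcal{J}_\lambda(t_\epsilon\tilde u)$ for $\epsilon$ small. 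Chaining,
\[
0\le\frac{\mathcal{J}_\lambda(t_\epsilon(\tilde u+\epsilon w))-\mathcal{J}_\lambda(t_\epsilon\tilde u)}{\epsilon}.
\]
Expanding the numerator produces three contributions of the form $t_\epsilon^\alpha$ ($\alpha\in\{2,\,1-q,\,p+1\}$) multiplying exactly the same difference quotients as above. Since $t_\epsilon\to 1$, the power factors tend to $1$, and the three limit analyses of the previous step (bilinearity, dominated convergence, monotone convergence) apply verbatim; the resulting inequality is \eqref{eq4.3} and simultaneously yields $\tilde u^{-q}w\in L^1(\Omega)$.

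The main obstacle in both cases is the singular difference quotient, which admits neither a uniform bound nor an obvious integrable majorant. The concavity of $t^{1-q}$ and the resulting monotonicity in $\epsilon$ is what turns this otherwise delicate step into a routine application of monotone convergence, producing the integral inequalities and the $L^{1}$-integrability of $u^{-q}w$ and $\tilde u^{-q}w$ at the same stroke. A minor technical point for \eqref{eq4.3} is that the power factors $t_\epsilon^\alpha$ must be pulled out of the integrals before taking the limit, but this is immediate since they are $\epsilon$-dependent constants.
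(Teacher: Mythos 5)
Your proof for non-negative $w$ matches the paper's approach point for point: use Lemma~\ref{3.5}(1) to get the one-sided difference quotient inequality for the $\mathcal{N}_\lambda^+$ minimizer, pass to the limit in the quadratic and power terms, handle the singular term by monotone convergence (you justify the monotonicity of the quotient via concavity of $t\mapsto t^{1-q}$, which the paper asserts without comment — a helpful addition), and then for the $\mathcal{N}_\lambda^-$ minimizer chain the inequalities $\mathcal{J}_\lambda(t_\epsilon(\tilde u+\epsilon w))\geq\mathcal{J}_\lambda(\tilde u)\geq\mathcal{J}_\lambda(t_\epsilon\tilde u)$ using Lemma~\ref{3.5}(2). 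This is exactly what the paper does.

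One caveat about your opening sentence. The decomposition $w=w^+-w^-$ does give the integrability claim $u^{-q}w\in L^1(\Omega)$ for signed $w$ (since $u^{-q}w^\pm\in L^1$), but it does \emph{not} transfer the one-sided inequalities \eqref{eq4.2}--\eqref{eq4.3} to signed $w$: from
\[
\langle u,w^+\rangle-\lambda\int_\Omega u^{-q}w^+\,dx-\int_\Omega u^p w^+\,dx\geq 0
\quad\text{and}\quad
\langle u,w^-\rangle-\lambda\int_\Omega u^{-q}w^-\,dx-\int_\Omega u^p w^-\,dx\geq 0
\]
you cannot conclude a sign for the difference. Indeed, if \eqref{eq4.2} held for all signed $w$ it would be an equality, which is precisely what the paper has yet to establish. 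In practice this is a non-issue: Lemma~\ref{3.5} is only stated for non-negative $w$, the paper's own proof only invokes it for non-negative $w$, and all later uses of this lemma (Lemma~\ref{le05}, Lemma~\ref{lem6.1}) plug in non-negative test functions. The phrase ``for all $w$'' in the lemma statement is best read as ``for all non-negative $w$''. So drop the claim of extension to arbitrary sign for the inequalities, and the proof is sound.
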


Suppose {that} $\phi_{1}>0$ is the first eigenfunction of $\mathcal{L}$ corresponding to the smallest eigenvalue $\lambda_1$ with the mixed boundary condition. Then $0<\phi_{1}\in L^{\infty}(U) $, {as shown in \cite{Tm}} and
\begin{equation} \label{firstev}
    \left\{
    \begin{aligned}
        \mathcal{L}\phi_1 &= \lambda_1 \phi_1, \quad \text{in} \quad \Omega, \\
        \phi_1 &= 0 \quad \text{in} \quad U^c, \\
        \mathcal{N}_s(\phi_1) &= 0 \quad \text{in} \quad \mathcal{N}, \\
        \frac{\partial \phi_1}{\partial \nu} &= 0 \quad \text{in} \quad \partial \Omega \cap \overline{\mathcal{N}}
    \end{aligned}
    \right.
\end{equation}
and satisfies for all $0\leq \varphi \in \mathcal{X}^{1,2}_{\mathcal{D}}(U)$
\begin{equation}
      \int_{\Omega}\nabla \phi_1 \cdot \nabla \varphi\, dx + \int_{{Q}}  \frac{(\phi_1(x)-\phi_1(y)) (\varphi(x) - \varphi(y))} {|x-y|^{N+2s}} \, dx \, dy 
 = \lambda_1\int_\Omega \phi_1(x)\varphi(x) \,dx.
\end{equation}
For instance, here we assume $\|\phi_1\|_{L^{\infty}(U)}=1.$
Let $\beta > 0$ be such that
$ f = \beta \phi_{1}$
and satisfies $\forall$ $0\leq \varphi \in \mathcal{X}^{1,2}_{\mathcal{D}}(U),$
 \begin{equation} \label{eq3.5}
 -\left(\int_{\Omega}\nabla f \cdot \nabla \varphi\, dx + \int_{{Q}}  \frac{(f(x)-f(y)) (\varphi(x) - \varphi(y))} {|x-y|^{N+2s}} \, dx \, dy\right)
+\int_\Omega  (\lambda f^{-q}(x)+f^{p}(x))\varphi(x) dx  >0,
\end{equation}
 and  $f^{p+q} (x) \leq \lambda\frac{q}{p}$
for a.e. $x \in \R^N$. 
\begin{lemma} \label{lem2.6}
For each non negative $w \in \mathcal{X}^{1,2}_{\mathcal{D}}(U)$, there exists
{a sequence $(w_m)_m$} in $\mathcal{X}^{1,2}_{\mathcal{D}}(U)$
such that each $w_{m}$ is compactly supported in $U$, with $0 \leq w_1 \leq w_2 \leq \ldots$  and $w_m \to w$ strongly in $\mathcal{X}^{1,2}_{\mathcal{D}}(U) $ as $m\to \infty$.
\end{lemma}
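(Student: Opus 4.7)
The plan is to approximate $w$ by $w_m := w\,\zeta_m$, where $\{\zeta_m\}$ is a carefully chosen sequence of smooth cutoff functions that vanish near $\partial U$, are bounded by $1$, and increase pointwise to $1$ inside $U$. Since $\Omega\cup\mathcal{N}$ is bounded with smooth boundary, the distance $d(x)=\operatorname{dist}(x,U^{c})$ is regular enough near $\partial U$ so that, for a fixed smooth nondecreasing $\phi:\mathbb{R}\to[0,1]$ with $\phi(t)=0$ for $t\leq 1/2$ and $\phi(t)=1$ for $t\geq 1$, the functions $\zeta_m(x):=\phi(m\,d(x))$ satisfy $0\leq\zeta_m\leq\zeta_{m+1}\leq 1$ (because $m\,d(x)\leq(m+1)\,d(x)$ and $\phi$ is nondecreasing), $|\nabla\zeta_m|\leq C\,m$ with support in the strip $\{1/(2m)\leq d\leq 1/m\}$, and $\zeta_m\to 1$ pointwise on $U$.

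Set $w_m:=w\,\zeta_m$. The properties $0\leq w_1\leq w_2\leq\cdots$, $w_m\in\mathcal{X}^{1,2}_{\mathcal{D}}(U)$, and $\operatorname{supp}(w_m)\subset\{d\geq 1/(2m)\}$ compact and contained in $U$ follow directly from the construction and the boundedness of $U$. For the strong convergence $\eta(w_m-w)\to 0$, I would split
\begin{equation*}
    \nabla(w-w_m) = (1-\zeta_m)\nabla w \;-\; w\,\nabla\zeta_m,
\end{equation*}
where the first summand vanishes in $L^{2}(\Omega)$ by dominated convergence. For the second summand the key estimate is
\begin{equation*}
    \|w\,\nabla\zeta_m\|_{L^{2}(\Omega)}^{2}\;\leq\; C\,m^{2}\!\int_{\{1/(2m)\leq d\leq 1/m\}}w^{2}\,dx\;\leq\; 4C\!\int_{\{d\leq 1/m\}}\frac{w^{2}}{d^{2}}\,dx,
\end{equation*}
and the right-hand side tends to $0$ by Hardy's inequality for $H^{1}_{0}(U)$ combined with dominated convergence on the shrinking sets $\{d\leq 1/m\}$. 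For the nonlocal Gagliardo seminorm I would invoke the continuous embedding $H^{1}(\mathbb{R}^{N})\hookrightarrow H^{s}(\mathbb{R}^{N})$: since $\nabla w\equiv 0$ a.e.\ on $U^{c}$, the previous estimates in fact yield $w_m\to w$ in $H^{1}(\mathbb{R}^{N})$, and this automatically gives $[w_m-w]_{s}\to 0$ when restricted to $Q\subseteq\mathbb{R}^{2N}$.

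The main obstacle is reconciling monotonicity of $(w_m)$ with compact support in $U$ and with the quantitative strong convergence. A naive truncation like $(w-1/m)^{+}$ is monotone but does not obviously have support compactly contained in $U$ for merely $H^{1}_{0}$-functions, while a naive cutoff trivially gives compact support but demands a sharp control of the ``bad term'' $w\,\nabla\zeta_m$. The distance-function construction $\zeta_m=\phi(m\,d)$ addresses both issues simultaneously, and the crux is Hardy's inequality, which is available thanks to the smooth boundary of $\Omega\cup\mathcal{N}$.
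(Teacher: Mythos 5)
Your proof is correct, but it takes a genuinely different route from the paper. The paper's proof avoids Hardy's inequality entirely: it starts from a sequence $(g_m)\subset C^{\infty}_0(U)$ with $g_m\to w$ in $\mathcal{X}^{1,2}_{\mathcal{D}}(U)$ (density), passes to $u_m=\min\{g_m,w\}$ — which still has compact support (inherited from $g_m$) and now satisfies $u_m\leq w$ — and then produces monotonicity by an iterative ``max-with-a-well-chosen-tail'' construction $w_{m+1}=\max\{w_m,u_{s_{m+1}}\}$, choosing the indices $s_m$ so that $\eta(w_m-w)\leq 1/m$. That argument trades the explicit cutoff for the lattice operations $\min,\max$ on $\mathcal{X}^{1,2}_{\mathcal{D}}(U)$ and needs density of $C^\infty_0(U)$, but it requires no boundary Hardy inequality and works under minimal regularity of $\partial U$. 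Your approach is more constructive and gives an explicit rate (controlled by the cutoff scale and the shrinking Hardy tail $\int_{\{d\leq 1/m\}}w^2/d^2$), at the price of invoking Hardy's inequality for $H^1_0(U)$; this is legitimate here because the paper assumes $\Omega\cup\mathcal N$ is bounded with smooth boundary, so $\partial U$ is smooth and $w/d\in L^2(U)$. Both arguments reduce the Gagliardo seminorm to the $H^1(\mathbb R^N)$ control via $H^1\hookrightarrow H^s$ and $[\cdot]_s\leq[\cdot]_{H^s(\mathbb R^N)}$, so that step is shared. One small point worth making explicit in your write-up: $\zeta_m$ need not be globally smooth, only Lipschitz (it equals $1$ off a thin collar of $\partial U$ where $d$ is smooth by the boundary regularity), which is all that the chain rule for $w\zeta_m\in H^1$ requires.
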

\begin{proof}
Let $0\leq w \in \mathcal{X}^{1,2}_{\mathcal{D}}(U)$ and let ${(g_m)_m}$  be a nonnegative  sequence in  ${\mathcal{C}}^{\infty}_{0}(U)$ such that  
  $g_m \to w$  strongly in $\mathcal{X}^{1,2}_{\mathcal{D}}(U)$ as $m\to \infty$.
Define $u_m =  \min \{g_m, w\}$ for each $m$ which strongly converges to $w$ in
$\mathcal{X}^{1,2}_{\mathcal{D}}(U)$. Let $w_1 = u_{s_1}$, choosing $s_1\in \mathbb N$ which satisfies
\[\eta(u_{s_1}-w) \leq 1.\]
Then $\max \{w_1, u_m\} \to w$ strongly in $\mathcal{X}^{1,2}_{\mathcal{D}}(U)$ as
$m \to \infty$.  We now choose $s_2\in \mathbb N$ satisfying
\[\eta(\max\{w_1,u_{s_2}\}-w) \leq 1/2.\]
Let $w_2=\max\{w_1,u_{s_2}\}$ then $\max \{w_2, u_m\} \to w$ strongly as $m \to \infty$.
Proceeding as above, in general, we set
\[w_{m+1}= \max\{w_m,u_{s_{m+1}}\}.\]
It is easy to see that $w_m \in \mathcal{X}^{1,2}_{\mathcal{D}}(U)$  has compact support
for each $m$ and
\[\eta(\max\{w_m,u_{s_{m+1}}\}-w) \leq \frac{1}{m+1}\]
which implies that
{ $(w_m)_m$} converges strongly to $w$ in $\mathcal{X}^{1,2}_{\mathcal{D}}(U)$ as $m \to \infty$.
\end{proof} 

\begin{lemma}\label{le05}
Assume $u$ and $\tilde{u}$ are minimizers of $\mathcal{J}_{\lambda}$ on $\mathcal{N}_{\lambda}^{+}$ and
$\mathcal{N}_{\lambda}^{-}$ respectively. Then there holds $u \geq f$ and $\tilde{u}\geq f$ a.e. in $\R^N$.
 \end{lemma}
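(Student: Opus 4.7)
The plan is to compare $u$ (resp.\ $\tilde u$) with the strict subsolution $f = \beta \phi_1$ by using $w := (f-u)^+$ (resp.\ $(f - \tilde u)^+$) as a test function in the variational inequality \eqref{eq4.2} from Lemma \ref{lem3.6} and, simultaneously, in the strict subsolution inequality \eqref{eq3.5}. Since $\mathcal{X}^{1,2}_{\mathcal{D}}(U)$ is a lattice and $0 \leq (f-u)^+ \leq f \in \mathcal{X}^{1,2}_{\mathcal{D}}(U)\cap L^\infty$, the choice $w = (f-u)^+$ is admissible in Lemma \ref{lem3.6}, which in turn supplies the integrability $u^{-q} w \in L^1(\Omega)$ needed to make the right-hand side finite.

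Assuming, for contradiction, that $w \not\equiv 0$, \eqref{eq4.2} yields
\begin{equation*}
\int_\Omega \nabla u \cdot \nabla w\,dx + \int_Q \frac{(u(x)-u(y))(w(x)-w(y))}{|x-y|^{N+2s}}\,dxdy \;\geq\; \int_\Omega (\lambda u^{-q} + u^p)\,w\,dx,
\end{equation*}
while \eqref{eq3.5} applied to $\varphi = w \not\equiv 0$ gives the reverse \emph{strict} inequality with $f$ in place of $u$. Subtracting the two produces
\begin{equation*}
\langle \mathcal L(u-f), w\rangle \;>\; \int_\Omega \bigl[g(u)-g(f)\bigr]\,w\,dx, \qquad g(t) := \lambda t^{-q} + t^p,
\end{equation*}
where $\langle \mathcal L \cdot , \cdot \rangle$ stands for the bilinear form of $\mathcal L$. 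An elementary calculation shows $g'(t) < 0$ on $(0, t_0)$ with $t_0 = (\lambda q/p)^{1/(p+q)}$, and by hypothesis $f \leq t_0$ a.e. Hence on the support of $w$, where $0 \leq u < f \leq t_0$, the monotonicity of $g$ forces $g(u) \geq g(f)$, so the right-hand side above is nonnegative.

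It remains to bound the left-hand side from above by $0$. Setting $v := u-f$, one has $w = v^-$, and the elementary pointwise inequalities $\nabla v \cdot \nabla v^- = -|\nabla v^-|^2$ together with $(v(x)-v(y))(v^-(x)-v^-(y)) \leq -|v^-(x)-v^-(y)|^2$ (already exploited in Lemmas \ref{l3.1} and \ref{wcp}) give $\langle \mathcal L(u-f), w\rangle \leq -\eta(w)^2 \leq 0$. Combining, $0 \geq \langle \mathcal L(u-f), w\rangle > 0$, a contradiction. Hence $(f-u)^+ \equiv 0$, i.e.\ $u \geq f$ a.e.\ in $\mathbb R^N$. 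The argument for $\tilde u$ is \emph{verbatim}, using \eqref{eq4.3} instead of \eqref{eq4.2}. The only delicate point is the admissibility of the test function $w$; if one wishes to avoid invoking directly the lattice structure of $\mathcal{X}^{1,2}_{\mathcal{D}}(U)$, Lemma \ref{lem2.6} furnishes an increasing sequence of compactly supported $w_m \nearrow w$, and monotone/dominated convergence (with the singular term controlled by $u^{-q} w \in L^1(\Omega)$) handles the passage to the limit.
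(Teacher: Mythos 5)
Your proposal is correct and follows essentially the same route as the paper: compare $u$ (resp.\ $\tilde u$) with the subsolution $f=\beta\phi_1$ by testing \eqref{eq4.2} and \eqref{eq3.5} with $(f-u)^+$, use the pointwise sign estimates for the local and nonlocal bilinear forms to bound the quadratic part above by $-\eta((f-u)^+)^2\leq 0$, and use the condition $f^{p+q}\leq\lambda q/p$ to ensure the nonlinear term has the right sign. The paper's proof runs the same computation but tests directly with the compactly supported increasing sequence $w_k$ from Lemma \ref{lem2.6} and passes to the limit, arriving at $\eta((f-u)^+)^2\leq 0$ without appealing to the strict inequality in \eqref{eq3.5}; you instead substitute $w=(f-u)^+$ directly and close by contradiction via that strict inequality, mentioning the approximating sequence as a fallback for admissibility — a stylistic variant of the same argument.
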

\begin{proof}
An easily noticeable fact is

\begin{equation}\label{eq3.4}
\frac{d}{dr}(\lambda r^{-q}+r^{p})
= -q \lambda r^{-q-1} +pr^{p-1} \leq 0 \iff ~ r^{p+q}\leq  \frac{\lambda q}{p}.
\end{equation}
By {Lemma} \ref{lem2.6}, we get existence of a non negative sequence
$\{w_k\}_k$ in $\mathcal{X}^{1,2}_{\mathcal{D}}(U)$  such that $0\leq w_k\leq (f-u)^+$, each $w_k$ is compactly supported in $U$ and as $k\to \infty$
\[w_k\to (f-u)^+\;\text{strongly in } \mathcal{X}^{1,2}_{\mathcal{D}}(U).\]
Thus,   \eqref{eq4.2} and \eqref{eq3.5} implies that
\begin{align*}
&\int_{{\Omega}} \Big ( \nabla (u-f)\cdot \nabla w_{k} \Big ) \,dx+ \int_{{Q}}  \frac{\left((u-f)(x)-(u-f)(y)\right) (w_k(x) - w_k(y))} {|x-y|^{N+2s}} \, dx \, dy \\
&- \int_\Omega \Big(\lambda u^{-q}(x)  + u^{p}(x)\Big)w_k(x)\,dx
+ \int_{\Omega} \Big(\lambda f^{-q}(x) +f^{p}(x)\Big)w_k(x)\,dx \geq 0.
\end{align*}
Since obviously $w_k(x)\to (f- u)^+(x)$ pointwise a.e. in $\R^N$, we set
$w_k(x) = (f- u)^+(x)+ o(1)$ as $k \to \infty$. Now considering the first and second terms in above, we see that
\begin{equation}\label{eq427}
\int_{{\Omega}} \Big ( \nabla (u-f)\cdot \nabla w_{k} \Big ) \,dx = \int_{{\Omega}} \Big ( \nabla (u-f)\cdot  \nabla(f-u)^+  \Big ) \,dx,
\end{equation}
and 
\begin{equation} \label{eq428}
\begin{aligned}
    &\int_{{Q}} \frac{\left((u-f)(x)-(u-f)(y)\right) (w_k(x) - w_k(y))} {|x-y|^{N+2s}} \, dx \, dy \\
    &\quad = \int_{{Q}} \frac{\left((u-f)(x)-(u-f)(y)\right) ((f-u)^+(x) - (f-u)^+(y))} {|x-y|^{N+2s}} \, dx \, dy  + o(1) \int_{{Q}} \frac{\left((u-f)(x)-(u-f)(y)\right)} {|x-y|^{N+2s}} \, dx \, dy.
\end{aligned}
\end{equation}
       Now, using $$\nabla(f -u)(x)=\nabla((f-u)^{+}(x)-(f-u)^{-}(x))$$ and from \eqref{eq427} and \eqref{eq428}, we can see
\begin{equation}\label{nbla}
    \begin{aligned}
     \int_{{\Omega}} \Big ( \nabla (u-f)\cdot  \nabla(f-u)^+  \Big ) \,dx
= - \| \nabla(f - u)^{+}\|^{2}_{L^2(\Omega)}
\end{aligned}
\end{equation}
and
\begin{equation}\label{eq429}
 \begin{aligned}
&\int_{{Q}}  \frac{\big((u-f)(x)-(u-f)(y)\big) \big((f-u)^+(x) - (f-u)^+(y)\big)} {|x-y|^{N+2s}} \, dx \, dy \\
&\quad= \int_{{Q}}  \frac{\Big[\big((f-u)^+(y)-(f-u)^-(y)\big)- \big((f-u)^+(x) - (f-u)^-(x)\big)\Big] \big((f-u)^+(x) - (f-u)^+(y)\big)} {|x-y|^{N+2s}} \, dx \, dy\\
&\quad\leq \int_{{Q}}  \frac{\left|(f-u)^+(x)-(f-u)^+(y)\right|^2}  {|x-y|^{N+2s}} \, dx \, dy. 
    \end{aligned}
       \end{equation}
Since $f^{p+q} (x) \leq  \frac{\lambda q}{p}$
a.e. $x \in \R^N$, using \eqref{eq3.4} we get
\begin{equation}\label{eq4210}
\begin{aligned}
& \int_\Omega \Big((\lambda u^{-q}(x) + u^{p}(x))
 - (\lambda f^{-q}(x) + f^{p}(x))\Big)w_k(x)\,dx \\
& = \int_{\Omega \cap \{f \geq u\}} \Big((\lambda u^{-q}(x) + u^{p}(x))
 - (\lambda f^{-q}(x) + f^{p}(x))\Big)((f-u)^{+}(x)+o(1))\,dx  \geq 0.
    \end{aligned}
 \end{equation}
Hence, from \eqref{nbla}, \eqref{eq429} and \eqref{eq4210}, we deduce that
 \begin{align*}
 0 &\leq -\eta\left((f-u )^+)\right)^2 -\int_\Omega (\lambda (u^{-q}(x) + u^{p}(x))w_k(x)\,dx+ \int_{\Omega} (\lambda f^{-q}(x) + f^{p}(x))w_k(x)\,dx +o(1)\\
 &\leq -\eta\left((f-u )^+)\right)^2+ o(1)
  \end{align*}
As $k \to \infty$, this gives
$\eta\left((f-u )^+)\right)^2 \leq 0$ which says that $u\geq f$ a.e. in $\R^N$. A similar argument shows that $\tilde{u}\geq f$ a.e. in $\R^N$.
\end{proof} \medskip
\subsection{Proof of Theorem \ref{2}:}
This subsection contains the most crucial part of this section, i.e. the proof of achieving $\inf \mathcal{J}_{\lambda}(\mathcal N_\lambda^+)$ and $\inf \mathcal{J}_{\lambda}(\mathcal N_\lambda^-)$. Also, it contains the proof that these minimizers are weak solutions of $(P_\lambda)$.


 \begin{lemma}
Let $\lambda \in\left(0, \Lambda\right), 0<q<1<p<2^*-1$.  Then there exist $u_{\lambda} \in \mathcal{N}_{\lambda}^{+}$and $v_{\lambda} \in \mathcal{N}_{\lambda}^{-}$ satisfying
$$
\mathcal{J}_{\lambda}\left(u_{\lambda}\right)=\inf_{u\in \mathcal{N}_{\lambda}^{+}} \mathcal{J}_{\lambda}(u) \quad \text{and}~~~ \mathcal{J}_{\lambda}\left(v_{\lambda}\right)=\inf_{u\in \mathcal{N}_{\lambda}^{-}} \mathcal{J}_{\lambda}(u).
$$
            \end{lemma}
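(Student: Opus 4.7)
The plan is to apply the direct method on each nodal component $\mathcal{N}_\lambda^\pm$ and then use the fibering-map machinery of Lemma~\ref{ 3.2} to place the weak limit in the correct component. First I would pick minimizing sequences $\{u_n\} \subset \mathcal{N}_\lambda^+$ and $\{v_n\} \subset \mathcal{N}_\lambda^-$; by $(P_1)$ and $(P_2)$ of Lemma~\ref{lem3.4} both sequences are bounded in $\mathcal{X}^{1,2}_{\mathcal{D}}(U)$, so along subsequences $u_n \rightharpoonup u_\lambda$ and $v_n \rightharpoonup v_\lambda$ weakly. Since $p+1 < 2^*$ and $1-q \in (0,1)$, the compact embedding of Remark~\ref{r2.5} yields
\[
\int_\Omega |u_n|^{1-q}\,dx \to \int_\Omega |u_\lambda|^{1-q}\,dx, \qquad \int_\Omega |u_n|^{p+1}\,dx \to \int_\Omega |u_\lambda|^{p+1}\,dx,
\]
and analogously for $v_n$; combined with the Nehari identity $\eta(u_n)^2 = \lambda \int |u_n|^{1-q} + \int |u_n|^{p+1}$, this already pins down $\lim \eta(u_n)^2 = \lambda \int |u_\lambda|^{1-q} + \int |u_\lambda|^{p+1}$ and the analogue for $v_n$.

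For the $\mathcal{N}_\lambda^+$ minimizer, the non-triviality $u_\lambda \not\equiv 0$ follows from Lemma~\ref{below}: otherwise the Nehari relation forces $\eta(u_n) \to 0$, so $\mathcal{J}_\lambda(u_n) \to 0$, contradicting $\inf_{\mathcal{N}_\lambda^+} \mathcal{J}_\lambda < 0$. Lower semicontinuity of $\eta$ together with the above limit gives $\Phi'_{u_\lambda}(1) \leq 0$. Assume strict inequality, so that by Lemma~\ref{ 3.2} applied to $u_\lambda$ the number $1$ lies either in $(0, t_{*\lambda})$ or in $(t^{*\lambda},\infty)$, where $t_{*\lambda} u_\lambda \in \mathcal{N}_\lambda^+$ and $t^{*\lambda} u_\lambda \in \mathcal{N}_\lambda^-$. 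In the first alternative, strict monotonicity of $\Phi_{u_\lambda}$ on $(1, t_{*\lambda})$ yields
\[
\mathcal{J}_\lambda(t_{*\lambda} u_\lambda) = \Phi_{u_\lambda}(t_{*\lambda}) < \Phi_{u_\lambda}(1) = \mathcal{J}_\lambda(u_\lambda) \leq \liminf_n \mathcal{J}_\lambda(u_n) = \inf_{\mathcal{N}_\lambda^+}\mathcal{J}_\lambda,
\]
which contradicts $t_{*\lambda} u_\lambda \in \mathcal{N}_\lambda^+$; the second alternative is excluded by combining $(P_1)$ with the fact that every $u_n$ has its fibering minimum at $t=1$, so that strong limits of subcritical integrals prevent $t^{*\lambda} < 1$. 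Hence $\Phi'_{u_\lambda}(1)=0$, $u_n \to u_\lambda$ strongly, $\Phi_{u_n}$ converges locally uniformly to $\Phi_{u_\lambda}$, $t_{*\lambda}=1$, and Corollary~\ref{cor3.3} rules out $\mathcal{N}_\lambda^0$ so that $u_\lambda \in \mathcal{N}_\lambda^+$ and $\mathcal{J}_\lambda(u_\lambda) = \inf_{\mathcal{N}_\lambda^+}\mathcal{J}_\lambda$.

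For $\mathcal{N}_\lambda^-$, non-triviality $v_\lambda \not\equiv 0$ follows from the uniform lower bound $\eta(v_n) \geq \delta > 0$ of $(P_2)$ via the same Nehari argument. The fibering analysis is dual: on $\mathcal{N}_\lambda^-$ the value $t=1$ is the local maximum $t^{*,n}$ and $\Phi_{v_n}$ is strictly decreasing on $(1,\infty)$. If strong convergence failed, one would obtain
\[
\Phi_{v_\lambda}(t^{*\lambda}) < \liminf_n \Phi_{v_n}(t^{*\lambda}) \leq \liminf_n \Phi_{v_n}(1) = \inf_{\mathcal{N}_\lambda^-}\mathcal{J}_\lambda,
\]
the middle inequality coming from the monotonicity of $\Phi_{v_n}$ past $t=1$ once one checks $t^{*\lambda} \geq 1$ for large $n$, while simultaneously $\Phi_{v_\lambda}(t^{*\lambda}) = \mathcal{J}_\lambda(t^{*\lambda} v_\lambda) \geq \inf_{\mathcal{N}_\lambda^-}\mathcal{J}_\lambda$ since $t^{*\lambda} v_\lambda \in \mathcal{N}_\lambda^-$; this is the desired contradiction. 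Therefore $v_n \to v_\lambda$ strongly, $t^{*\lambda}=1$, and $v_\lambda \in \mathcal{N}_\lambda^-$ attains $\inf_{\mathcal{N}_\lambda^-}\mathcal{J}_\lambda$.

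The hard part I expect is the casework in the last step: when $\Phi'_{u_\lambda}(1) < 0$ one must exclude the bad placement of $1$ relative to the fibering critical points of $u_\lambda$ (respectively $v_\lambda$), which requires dovetailing the uniform bounds of Lemma~\ref{lem3.4}, the sharp location information from Lemma~\ref{ 3.2}, and the convergence of the subcritical integrals. It is precisely here that the subcritical restriction $p < 2^*-1$ is used through the compactness of $L^{p+1}$; for $p=2^*-1$ this compactness breaks down and one would need concentration-compactness together with sharp estimates of the best constant, which is why only the single-solution statement is claimed in that regime.
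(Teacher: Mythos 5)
There is a genuine gap, and you correctly flag its location but do not fill it. Your treatment of the first alternative ($t_{*\lambda}>1$) is fine: weak lower semicontinuity of $\eta$, the compact embedding (Remark~\ref{r2.5}) for the subcritical integrals, and the Nehari identity do give $\Phi'_{u_\lambda}(1)\le 0$, and the chain $\Phi_{u_\lambda}(t_{*\lambda})<\Phi_{u_\lambda}(1)\le\liminf\mathcal{J}_\lambda(u_n)$ plus $t_{*\lambda}u_\lambda\in\mathcal{N}_\lambda^+$ is a clean contradiction. But the second alternative, $t^{*\lambda}<1$, is dismissed with ``combine $(P_1)$ with the fact that every $u_n$ has its fibering minimum at $t=1$, so that strong limits of subcritical integrals prevent $t^{*\lambda}<1$'' -- this is not an argument. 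Nothing in $(P_1)$ or in the convergence of the subcritical integrals by itself excludes the geometry in which $1$ sits to the right of the local maximum of $\Phi_{u_\lambda}$; one really must exploit quantitatively the defect $a^2=\lim\eta(u_n-u_\lambda)^2>0$.

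The paper's mechanism for that case, which your proposal omits, is the Brezis--Lieb identity $\lim_n\eta(u_n)^2=\eta(u_\lambda)^2+a^2$ together with the auxiliary function $g(t)=\Phi_{u_\lambda}(t)+\tfrac{a^2t^2}{2}$. The identity upgrades your one-sided inequality $\Phi'_{u_\lambda}(1)\le 0$ to the exact relation $\Phi'_{u_\lambda}(1)=-a^2$ and gives $\lim_n\mathcal{J}_\lambda(u_n)=g(1)$. Since $g'(1)=\Phi'_{u_\lambda}(1)+a^2=0$ and $g'(t^{*\lambda})=t^{*\lambda}a^2>0$, $g$ is increasing on $[t^{*\lambda},1]$, and comparing $g(1)$ with $g(t^{*\lambda})$ and then $\Phi_{u_\lambda}(t^{*\lambda})>\Phi_{u_\lambda}(t_{*\lambda})\ge\inf_{\mathcal{N}_\lambda^+}\mathcal{J}_\lambda$ yields the desired contradiction. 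This is the key idea absent from your proposal. The same remark applies to the $\mathcal{N}_\lambda^-$ half: the middle inequality in your chain rests on ``once one checks $t^{*\lambda}\ge 1$ for large $n$,'' which is exactly the unsettled case, and the paper again treats it with the functions $X(t)=\tfrac{a^2t^2}{2}$ and $Y(t)=\Phi_{v_\lambda}(t)+X(t)$ (and, when $t^{*\lambda}\ge 1$ and $a\neq 0$, by noting $\Phi''_{v_\lambda}(1)=-a^2<0$ contradicts the placement of $t^{*\lambda}$). In short: your skeleton matches the paper, and you are right that $p<2^*-1$ enters only through compactness of the lower-order integrals, but without the Brezis--Lieb bookkeeping and the shifted fibering function the excluded case is not actually excluded.
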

\begin{proof}
Firstly, we will show that there exists $u_{\lambda} \in \mathcal{N}_{\lambda}^{+}$ such that $\mathcal{J}_{\lambda}\left(u_{\lambda}\right)=\inf_{u\in \mathcal{N}_{\lambda}^{+}} \mathcal{J}_{\lambda}(u)$. By Lemma \ref{thm3.1}, $\mathcal{J}_{\lambda}$ is bounded below on $\mathcal{N}_{\lambda}^{+}$.
Let  $\{u_{n}\} \subset \mathcal{N}_{\lambda}^{+}$ be a sequence such that $\mathcal{J}_{\lambda}(u_{n}) \to \inf_{u\in \mathcal{N}_{\lambda}^{+}} \mathcal{J}_{\lambda}(u)$ as $n \to \infty$.   Using condition $(P_1)$ of Lemma \ref{lem3.4} yields that there exists $u_{\lambda} \in \mathcal{X}^{1,2}_D(U) $  such that
$u_{n} \rightharpoonup u_{\lambda}$ weakly in the space $\mathcal{X}^{1,2}_D(U)$, up to a subsequence if necessary.
We claim that   $u_n \to u_{\lambda}$ strongly in $\mathcal{X}^{1,2}_D(U),$
as $n\to \infty$. Otherwise, we may assume $$\eta(u_n-u_{\lambda})^2 \to a^2 \neq 0$$
as $n \to \infty$. 
By {the} Brezis-Lieb lemma we have
 $$\lim_{n \to \infty}\Big(\eta(u_{\lambda})^2 -\eta(u_n)^2  + \eta(u_{\lambda}- u_n)^2 \Big) = 0.$$
Since $u_n\in\mathcal{N}_{\lambda}^{+}$ and using embedding result (see Remark \ref{r2.5}) for  $\mathcal{X}^{1,2}_D(U)$ in the subcritical case, we obtain
\begin{equation}\label{eq8}
0  = \lim_{n \to \infty} \Phi'_{u_n}(1) = \Phi'_{u_{\lambda}}(1)+a^2,
\end{equation}
which implies
$$
\eta(u_{\lambda})^2 + a^2 = \lambda \int_{\Omega}|u_{\lambda}(x)|^{1-q}dx + \int_{\Omega}|u_{\lambda}(x)|^{p+1}dx.
$$
Before moving further, we claim that $u_{\lambda} \in \mathcal{X}^{1,2}_D(U)$ can not be identically zero. If $u_{\lambda}\equiv 0$
 and $u_n\to u_{\lambda}$ strongly in $\mathcal{X}^{1,2}_D(U)$, then Lemma \ref{below} gives $0=\mathcal{J}_{\lambda}(0)=\mathcal{J}_{\lambda}(u_{\lambda})\leq {\operatorname{\liminf}}_{n\to\infty} \mathcal{J}_{\lambda}(u_n)<0,$
giving a contradiction. 
Now applying Lemma \ref{ 3.2}, we state that there exists
{$t_*$, $t^*$, with} $0 < t_* < t^*$ such that
$\Phi'_{u_{\lambda}}(t_*)= \Phi'_{u_{\lambda}}(t^*) = 0$ and
$t_{*}u_{\lambda} \in \mathcal{N}^{+}_{\lambda}$, $t^*u_{\lambda} \in \mathcal{N}^{-}_{\lambda}$. From \eqref{eq8}, we have $\Phi^{\prime}_{u_{\lambda}}(1)<0$, which gives the following two cases as follows.
\smallskip

\noindent\textbf{Case (i) ($t^* < 1$)} We consider the function
\[
g(t) = \Phi_{u_{\lambda}}(t)+ \frac{a^2t^2}{2},
\]
for $t >0$. From equation \eqref{eq8}, we get $ g'(1) = \Phi'_{u_{\lambda}}(1)+a^2 = 0$
and
\[
 g'(t^*) = \Phi'_{u_{\lambda}}(t^*)+t^*a^2
= {t^*}a^2
 > 0
\]
from which we can conclude that $g$ is
{an} increasing function on $[t^*,1]$. Hence,
\begin{align*}
\inf_{u\in \mathcal{N}_{\lambda}^{+}} \mathcal{J}_{\lambda}(u)
= \lim_{n \to \infty}\mathcal{J}_{\lambda}(u_n) \geq \Phi_{u_{\lambda}}(1) + \frac{a^2}{2}
& = g(1) > g(t^*) =\Phi_{u_{\lambda}}(t^*) + \frac{a^2(t^*)^{2}}{2}
\geq \Phi_{u_{\lambda}}(t^*) + \frac{(t^*)^{2}}{2} a^2 \\
& > \Phi_{u_{\lambda}}(t^*) > \Phi_{u_{\lambda}}(t_*)
\geq \inf_{u\in \mathcal{N}_{\lambda}^{+}} \mathcal{J}_{\lambda}(u),
\end{align*}
which is a contradiction.
\smallskip
Thus the only case that possibly holds is the following-\\
\noindent \textbf{Case (ii) ($t_* \geq 1$ )} Here, by \eqref{eq8}  we have
$$
\inf_{u\in \mathcal{N}_{\lambda}^{+}} \mathcal{J}_{\lambda}(u)
= \mathcal{J}_{\lambda}(u_{\lambda})+\frac{a^2}{2}
\geq \mathcal{J}_{\lambda}(u_{\lambda}) = \Phi_{u_{\lambda}}(1) \geq \Phi_{u_{\lambda}}(t_*)
\geq \inf \mathcal{J}_{\lambda}(\mathcal{N}^+_{\lambda}) .
$$
Clearly, this is possible when $t_* = 1$ and $a^2 = 0$
which yields $a=0$ by equation $\eqref{eq8}$ and $u_{\lambda} \in \mathcal{N}^+_{\lambda}$.  
This shows that $u_n \to u_{\lambda}$ strongly in $\mathcal{X}^{1,2}_D(U)$. Therefore,
$\mathcal{J}_{\lambda}(u_{\lambda}) = \inf_{u\in \mathcal{N}_{\lambda}^{+}} \mathcal{J}_{\lambda}(u)$.

Next, we will show that
 $\{v_n\}$ be a sequence in $\mathcal{N}^-_{\lambda}$ such that
$\mathcal{J}_{\lambda}(v_n) \to \inf \mathcal{J}_{\lambda}(\mathcal{N}^-_{\lambda})= \mathcal{J}_{\lambda}(v_{\lambda})$ as $n \to \infty$. Then using condition $(P_2)$ of Lemma \ref{lem3.4}, we know that there {exists} $v_{\lambda}\in \mathcal{X}^{1,2}_D(U)$ such that
$v_n \rightharpoonup v_{\lambda}$ weakly in $\mathcal{X}^{1,2}_D(U)$ as $n\to \infty$
up to a subsequence if necessary. 
We claim that $v_n \to v_{\lambda}$ strongly in $\mathcal{X}^{1,2}_D(U)$. Assume that $\eta(v_n - v_{\lambda})^2 \to a^2$
, as $n \to \infty$.
 Using the Brezis-Lieb lemma, we obtain
$$\lim_{n \to \infty}\Big(\eta(v_{\lambda})^2 -\eta(v_n)^2  + \eta(v_{\lambda}- v_n)^2 \Big) = 0.$$
Since $v_n\in\mathcal{N}_{\lambda}^{-}$, we obtain
\begin{equation}\label{eq9}
0  = \lim_{n \to \infty} \Phi'_{v_n}(1) = \Phi'_{v_{\lambda}}(1)+a^2.
\end{equation}
which implies
\begin{equation} \label{eq5.1}
    \eta(v_{\lambda})^2 +a^2 = \lambda\int_{\Omega}|v_{\lambda}(x)|^{1-q}dx
+ \int_{\Omega}|v_{\lambda} (x)|^{p+1}.
\end{equation}
By using \eqref{eq5.1}, we have that
\[
\inf \mathcal{J}_{\lambda}(\mathcal{N}^-_{\lambda}) = \lim_{n\to \infty} \mathcal{J}_{\lambda}(v_n)\geq \mathcal{J}_{\lambda}(v_{\lambda})+
\frac{a^2}{2}.
\]
Initially, we now claim that $v_{\lambda}\in \mathcal{X}^{1,2}_D(U) $ is not identically zero in $\R^N$. If $v_{\lambda}\equiv 0$ in $\R^N$
and $a \neq 0$ 
and $\{v_n\}$ converges strongly to $0,$ which gives a contradiction with condition $(P_2)$ of Lemma \ref{lem3.4}. 

Applying Lemma \ref{ 3.2}, we state that there exist
$t_*,\, t^*$, with
$0 < t_* < t^*$, such that
$\Phi'_{v_{\lambda}}(t_*)= \Phi'_{v_{\lambda}}(t^*) = 0$ and
$t_{*}v_{\lambda} \in \mathcal{N}^{+}_{\lambda}$, $t^*v_{\lambda} \in \mathcal{N}^{-}_{\lambda}$.
We consider the functions
$X,Y :(0,\infty) \to \mathbb{R}~~ \text{ defined by}$
\begin{equation}\label{eq11}
 X(t) = \frac{a^2t^2}{2} \quad \text{and} \quad
 Y(t) = \Phi_{v_{\lambda}}(t)+ X(t).
\end{equation}
Then,  two possible cases {arise} as follows.\\
\textbf{Case (i)} $(t^* <1)$  We have $Y'(1)= \Phi'_{v_{\lambda}}(1)+ X'(1) = 0$, using
\eqref{eq11} and $Y'(t^*)= \Phi'_{v_{\lambda}}(t^*)+ X'(t^*)
= t^*a^2 >0$.
We can observe that $Y$ is increasing on $[t^*,1]$ and we have
\[
\inf \mathcal{J}_{\lambda}(\mathcal{N}^-_{\lambda}) \geq Y(1)> Y(t^*)
\geq \mathcal{J}_{\lambda}(t^* v_{\lambda})+\frac{{t^*}}{2}a^2 > \mathcal{J}_{\lambda}(t^* v_{\lambda})
\geq \inf \mathcal{J}_{\lambda}(\mathcal{N}^-_{\lambda}),
\]
which is a contradiction. \\
\textbf{Case (ii)} ($t^* \geq 1$)  For this case,  we are left with the choice that
if $a \neq 0$, then $\Phi'_{v_{\lambda}}(1) = -a^2 <0$ and
$\Phi''_{v_{\lambda}}(1) = -a^2<0$, which contradicts $t^* \geq 1$.
Thus, $a=0$ and $\{v_n\}$ converges strongly to $v_{\lambda}$ in $\mathcal{X}^{1,2}_D(U).$  By assumption $\lambda\in (0,\Lambda)$, we have $\Phi^{''}_{v_{\lambda}}(1)<0.$  Consequently,  $v_{\lambda} \in \mathcal{N}^-_{\lambda}$ and $\mathcal{J}_{\lambda}(v_{\lambda})= \operatorname{\inf} \mathcal{J}_{\lambda}(\mathcal{N}^-_{\lambda}). $ 
\end{proof}
 \begin{lemma}
Let $\lambda \in\left(0, \Lambda\right), 0<q<1, ~p=2^*-1$.  Then there exist $u_{\lambda} \in \mathcal{N}_{\lambda}^{+}$ satisfying
$$
\mathcal{J}_{\lambda}\left(u_{\lambda}\right)=\inf_{u\in \mathcal{N}_{\lambda}^{+}} \mathcal{J}_{\lambda}(u).
$$
         \end{lemma}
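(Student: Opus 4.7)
The plan is to mirror the minimizing-sequence argument of the preceding (subcritical) lemma for $\mathcal{N}_\lambda^+$, replacing the compact $L^{p+1}$ embedding (which fails in the critical case $p=2^*-1$) by the Brezis--Lieb decomposition together with the a priori size restriction on $\mathcal{N}_\lambda^+$ built into the definition of $\Lambda$. First, I would pick a minimizing sequence $\{u_n\}\subset\mathcal{N}_\lambda^+$ for $c:=\inf\mathcal{J}_\lambda(\mathcal{N}_\lambda^+)$. By property $(P_1)$ of Lemma \ref{lem3.4} together with the $\Lambda$-threshold bound $\eta(u)\leq (2/2^*)^{2/(2^*-2)}S^{2^*/(2^*-2)}$, the sequence is uniformly bounded in $\mathcal{X}^{1,2}_{\mathcal{D}}(U)$. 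Extract a subsequence with $u_n\rightharpoonup u_\lambda$ weakly, $u_n\to u_\lambda$ pointwise a.e., and --- by Remark \ref{r2.5}, since $1-q<2^*$ --- $\int_{\Omega}|u_n|^{1-q}\to\int_{\Omega}|u_\lambda|^{1-q}$.

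Next I would apply the Brezis--Lieb lemma to both the Hilbert norm and to $L^{2^*}$, setting $a^2:=\lim_n\eta(u_n-u_\lambda)^2$ and $b^{2^*}:=\lim_n\|u_n-u_\lambda\|_{L^{2^*}(U)}^{2^*}$; the Sobolev inequality gives $Sb^2\leq a^2$. Passing to the limit in the Nehari identity $\Phi'_{u_n}(1)=0$ yields the key relation $\Phi'_{u_\lambda}(1)=b^{2^*}-a^2$, and Brezis--Lieb applied to the energy functional produces $c=\mathcal{J}_\lambda(u_\lambda)+\tfrac{1}{2}a^2-\tfrac{1}{2^*}b^{2^*}$. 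Ruling out $u_\lambda\equiv 0$ is immediate: in that case $\|u_n\|_{L^{1-q}}^{1-q}\to 0$, so the Nehari identity gives $\eta(u_n)^2=\|u_n\|_{L^{2^*}}^{2^*}+o(1)$ and hence $\mathcal{J}_\lambda(u_n)=(\tfrac{1}{2}-\tfrac{1}{2^*})\eta(u_n)^2+o(1)\geq 0$, contradicting Lemma \ref{below}.

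The crucial step is to prove strong convergence, i.e., $a=0$ (which by Sobolev forces $b=0$). Suppose for contradiction $a>0$. The definition of $\Lambda$ is tailored so that the uniform bound on $\eta(u_n)^2$ combined with $Sb^2\leq a^2$ yields $b^{2^*-2}<S$, and hence $b^{2^*}<a^2$; consequently $\Phi'_{u_\lambda}(1)<0$. Since $u_\lambda\neq 0$, Lemma \ref{ 3.2} produces unique $0<t_*<t^*$ with $t_*u_\lambda\in\mathcal{N}_\lambda^+$, $t^*u_\lambda\in\mathcal{N}_\lambda^-$, and $\Phi'_{u_\lambda}(1)<0$ forces $1\notin(t_*,t^*)$. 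I would then run the same two-case dichotomy as in the subcritical proof. If $t^*<1$, introduce the auxiliary function $g(t):=\Phi_{u_\lambda}(t)+\tfrac{a^2}{2}t^2-\tfrac{b^{2^*}}{2^*}t^{2^*}$, which is the pointwise limit of $\Phi_{u_n}(t)$; a direct computation gives $g'(1)=0$ and $g'(t^*)=t^*\bigl[a^2-b^{2^*}(t^*)^{2^*-2}\bigr]>0$ (using $(t^*)^{2^*-2}<1$ and $b^{2^*}<a^2$), hence $g$ is increasing on $[t^*,1]$ and one obtains the chain $c=g(1)>g(t^*)\geq\mathcal{J}_\lambda(t^*u_\lambda)>\mathcal{J}_\lambda(t_*u_\lambda)\geq c$. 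If instead $t_*\geq 1$, the inequality $\tfrac{1}{2}a^2>\tfrac{1}{2^*}b^{2^*}$ (from $b^{2^*}<a^2$ and $2^*>2$) gives the chain $c=\mathcal{J}_\lambda(u_\lambda)+\tfrac{a^2}{2}-\tfrac{b^{2^*}}{2^*}>\mathcal{J}_\lambda(u_\lambda)\geq\Phi_{u_\lambda}(t_*)\geq c$. In either branch we reach a contradiction, so $a=0$ and $u_n\to u_\lambda$ strongly in $\mathcal{X}^{1,2}_{\mathcal{D}}(U)$. Passing to the limit then yields $u_\lambda\in\mathcal{N}_\lambda$, and Corollary \ref{cor3.3} together with the weak lower semicontinuity of $\Phi''_{u_\lambda}(1)$ places $u_\lambda\in\mathcal{N}_\lambda^+$ as a minimizer. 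The main obstacle is precisely this final step: without the $\Lambda$-dictated norm bound the concentration mass $a$ could reach the critical Sobolev threshold $S^{N/2}$, beyond which $b^{2^*}<a^2$ could fail and the fiber-map dichotomy would collapse --- this is exactly the obstruction that leaves the analogous statement on $\mathcal{N}_\lambda^-$ in the critical case open, as flagged in the introduction.
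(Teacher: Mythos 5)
Your argument is correct and is exactly what the paper intends: its own proof of this lemma is a one-line citation of \cite{LSharma} (Proposition 5.2), and your Brezis--Lieb decomposition --- tracking the concentration defects $a^2=\lim\eta(u_n-u_\lambda)^2$ and $b^{2^*}=\lim\|u_n-u_\lambda\|_{L^{2^*}}^{2^*}$, passing to the limit in the Nehari identity to get $\Phi'_{u_\lambda}(1)=b^{2^*}-a^2$, and invoking the $\Lambda$-threshold bound on $\eta$ over $\mathcal{N}_\lambda^+$ to force $b^{2^*}<a^2$ before running the same two-case fiber-map dichotomy used in the subcritical proof --- is precisely that reference filled in. One small point, which you share with the paper's own subcritical argument, is that $g'(t^*)>0$ together with $g'(1)=0$ does not by itself yield that $g$ is increasing on $[t^*,1]$; one should add that $g'(t)/t$ has the unimodal form $A-Bt^{-q-1}-Ct^{2^*-2}$ with $A,B,C>0$ (increasing then decreasing), so the two endpoint signs force $g'\geq 0$ on the whole interval.
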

    \begin{proof}
The proof follows using identical arguments as [\cite{LSharma}, Proposition 5.2].
\end{proof}

Now, to complete the proof of main Theorem \ref{2}, we claim that  $u_\lambda$ and $v_{\lambda}$ are positive weak solutions of \eqref{1}.
 Fix $\zeta \in C^{\infty}_0(\Omega)$.  By Lemma \ref{le05} and $f>0$ in $\Omega$, we infer that $u_\lambda,~v_\lambda>0$ a.e. in $\Omega$ and  there exists   
 $ M' >0$ such that $u_\lambda \geq M'>0$ and $v_{\lambda}\geq M'>0$ on
the support of $\zeta$.
Then $u_{\lambda}+\epsilon \zeta\ge 0$ for  $\epsilon$
small enough. {The} same arguments as in the proof of
Lemma \ref{3.5} {shows that} $ \mathcal{J}_{\lambda}(u_\lambda+\epsilon \zeta) \geq \mathcal{J}_{\lambda}(u_\lambda)$
for sufficiently small $\epsilon >0$ and $t_{\epsilon} \to 1$ as $\epsilon \to 0^+$, where $t_{\epsilon}$ is a unique positive real number which is satisfying $t_{\epsilon}\left(v_{\lambda}+\epsilon\zeta\right)\in \mathcal{N}_{\lambda}^-.$ Hence, we obtain
\begin{align*}
0 & \leq \lim _{\epsilon \to 0^+} \frac{\mathcal{J}_{\lambda}(u_\lambda+\epsilon\zeta) - \mathcal{J}_{\lambda}(u_\lambda)}{\epsilon}\\
&=  \int_{\Omega}\nabla u_{\lambda} \cdot \nabla \zeta \, dx + \int_{{Q}}  \frac{(u_{\lambda}(x)-u_{\lambda}(y)) (\zeta(x) - \zeta(y))} {|x-y|^{N+2s}} \, dx \, dy 
 - \lambda \int_\Omega  u_{\lambda}^{-q}(x){\zeta(x)}\,dx
 - \int_\Omega u_{\lambda}^{p}(x){\zeta(x)}dx  \,
\end{align*}
and 
\begin{align*}
0 &\leq \lim _{\epsilon \to 0^+} \frac{\mathcal{J}_{\lambda}\left( t_{\epsilon}(v_\lambda+\epsilon\zeta)\right) - \mathcal{J}_{\lambda}(v_\lambda)}{\epsilon}\leq \lim _{\epsilon \to 0} \frac{\mathcal{J}_{\lambda}\left( t_{\epsilon}(v_\lambda+\epsilon\zeta)\right) - \mathcal{J}_{\lambda}\left(t_{\epsilon}v_\lambda\right)}{\epsilon}\\
&= \int_{\Omega}\nabla v_{\lambda} \cdot \nabla \zeta \, dx + \int_{{Q}}  \frac{(v_{\lambda}(x)-v_{\lambda}(y)) (\zeta(x) - \zeta(y))} {|x-y|^{N+2s}} \, dx \, dy 
 - \lambda \int_\Omega  v_{\lambda}^{-q}(x){\zeta(x)}\,dx
 - \int_\Omega v_{\lambda}^{p}(x){\zeta(x)}dx  \,
\end{align*}
Since $\zeta \in C^{\infty}_0({\Omega})$ is arbitrary, this
implies that $u_\lambda$ and $v_{\lambda}$
are  positive weak solution of \eqref{1}. 

As a consequence, we have the following.
\begin{lemma} We have
$\Lambda < \infty.$
    \end{lemma}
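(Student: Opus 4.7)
The plan is to argue by contradiction. Suppose $\Lambda = \infty$; then, by the existence result on $\mathcal{N}_\lambda^+$ established just above (which applies for every $\lambda \in (0, \Lambda)$), for each $\lambda > 0$ there is a positive weak solution $u_\lambda \in \mathcal{X}^{1,2}_{\mathcal{D}}(U)$ of $(P_\lambda)$ obtained by minimization. To obstruct this for large $\lambda$, I shall test against the first eigenfunction $\phi_1$ of $\mathcal{L}$ under the mixed boundary conditions (see \eqref{firstev}) and combine the resulting identity with a weighted AM--GM lower bound on $\lambda t^{-q} + t^p$.

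Since $\phi_1 \in \mathcal{X}^{1,2}_{\mathcal{D}}(U)$ is nonnegative, Lemma \ref{lem3.6} applied with $u = u_\lambda$ and $w = \phi_1$ gives
\begin{equation*}
\langle u_\lambda, \phi_1 \rangle \;\geq\; \lambda \int_\Omega u_\lambda^{-q}\, \phi_1 \, dx + \int_\Omega u_\lambda^{p}\, \phi_1 \, dx,
\end{equation*}
where $\langle \cdot, \cdot \rangle$ is the inner product of $\mathcal{X}^{1,2}_{\mathcal{D}}(U)$. By symmetry of this bilinear form and by testing \eqref{firstev} against $u_\lambda$, I obtain $\langle u_\lambda, \phi_1 \rangle = \lambda_1 \int_\Omega u_\lambda \phi_1 \, dx$, and so
\begin{equation*}
\lambda_1 \int_\Omega u_\lambda \phi_1 \, dx \;\geq\; \int_\Omega \bigl( \lambda u_\lambda^{-q} + u_\lambda^{p} \bigr) \phi_1 \, dx.
\end{equation*}

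The key pointwise inequality to prove is: there exists $C=C(p,q) > 0$ such that
\begin{equation*}
\lambda t^{-q} + t^{p} \;\geq\; C(p,q)\, \lambda^{\frac{p-1}{p+q}}\, t \qquad \text{for every } t > 0.
\end{equation*}
This follows by applying weighted AM--GM to $g(t)=\lambda t^{-q-1} + t^{p-1}$ with weights $\tfrac{p-1}{p+q}$ and $\tfrac{q+1}{p+q}$: these make the exponent of $t$ vanish in the geometric mean and produce the lower bound $C(p,q)\lambda^{(p-1)/(p+q)}$ for $g(t)$, whence the claimed estimate after multiplication by $t$.

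Feeding the pointwise bound into the integral inequality and cancelling the strictly positive quantity $\int_\Omega u_\lambda \phi_1\, dx$ (both $u_\lambda$ and $\phi_1$ are positive a.e.\ in $\Omega$) yields $\lambda_1 \geq C(p,q)\,\lambda^{(p-1)/(p+q)}$, i.e.\ $\lambda \leq \bigl(\lambda_1/C(p,q)\bigr)^{(p+q)/(p-1)}$, independently of $\lambda$. This contradicts $\Lambda = \infty$, so $\Lambda < \infty$. The only delicate point is the admissibility of $\phi_1$ as a test function together with the integrability $u_\lambda^{-q}\phi_1 \in L^1(\Omega)$; both are supplied directly by Lemma \ref{lem3.6}, so no truncation or approximation argument is needed, and the same reasoning carries over verbatim to the critical case $p = 2^*-1$.
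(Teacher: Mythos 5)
Your argument is correct and follows essentially the same approach as the paper: test against the first eigenfunction $\phi_1$ and compare $\lambda t^{-q}+t^p$ to a multiple of $t$ pointwise. The only differences are cosmetic — you make the pointwise constant explicit via weighted AM--GM (obtaining $C(p,q)\,\lambda^{(p-1)/(p+q)}$) where the paper simply chooses $\mu^*$ large enough, and you invoke the one-sided variational inequality of Lemma \ref{lem3.6} rather than the weak-solution identity.
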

\begin{proof}
Taking $\phi_1$ as the test function in \eqref{1}, we obtain
\[
\lambda_1 \int_{\Omega} u \phi_1\,dx =\int_{\Omega} \mathcal{L}u \phi_1 \, dx= \int_{\Omega} (\lambda u^{-q}+ u^p) \phi_1\,dx.\]
Now, we choose $\mu^* > 0$ large enough such that $\mu^* t^{-q} + t^{p} > (\lambda_1 + \epsilon)t$ for all $t > 0$ and $\epsilon>0$. Then 
\begin{equation}\label{eq435}
\int_{\Omega} (\lambda_1+\epsilon)u\phi_1\,dx> \int_{\Omega}\lambda_1 u\phi_1\,dx=\int_{\Omega}(\lambda u^{-q}+ u^p) \phi_1\,dx.
    \end{equation}
which implies
\begin{equation*}
\int_{\Omega}\left( \mu^* u^{-q} +u^p\right)\phi_1 \,dx > \int_{\Omega} \left( \lambda u^{-q} + u^p\right)\phi_1\,dx .   \end{equation*}
Hence
\[\int_{\Omega}(\lambda- \mu^*) u^{-q}\phi_1\,dx<0\]
but as we know that $u>0$, $\phi_1>0$ in $\Omega.$ So, it must be $\lambda<\mu^*.$
Thus, the proof follows. 
    \end{proof}

We shall prove a short regularity result for positive weak solution to problem \eqref{1}.
\begin{lemma} \label{lem6.1}
Let $w$ be a positive weak solution of \eqref{1}
{and let $u \in \mathcal{X}^{1,2}_{\mathcal{D}}(U)$. Then} $w^{-q}u \in L^{1}({\Omega})$ and 
\begin{gather*}
 \int_{\Omega}\nabla u \cdot \nabla w \, dx + \int_{{Q}}  \frac{(u(x)-u(y)) (w(x) - w(y))} {|x-y|^{N+2s}} \, dx \, dy
 - \int_\Omega \big(\lambda  w^{-q}(x)  + w^{2^*-1}(x)\big)u(x) \,dx = 0,
 \end{gather*}
 for each $u \in \mathcal{X}^{1,2}_{\mathcal{D}}(U)$.
\end{lemma}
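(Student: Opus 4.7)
The proof follows the density strategy of Proposition \ref{prop3.1}, adapted to accommodate the additional term $w^{2^*-1}u$, which causes no difficulty because Theorem \ref{3} provides $w \in L^\infty(U)$ and hence $w^{2^*-1} \in L^\infty(\Omega)$. Since $w$ is a positive weak solution of \eqref{1}, we have $w > 0$ a.e. in $\Omega$, so $w^{-q}$ is well-defined a.e. The bulk of the work is to establish integrability of the singular term $w^{-q}u$ and then to pass to the limit on a smooth approximating sequence for $u$.

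\textbf{Step 1 (integrability of $w^{-q}u$).} For any non-negative $\phi \in \mathcal{X}^{1,2}_{\mathcal{D}}(U)$, pick a non-negative sequence $\phi_n \in C^\infty_0(U)$ converging to $\phi$ strongly in $\mathcal{X}^{1,2}_{\mathcal{D}}(U)$ and pointwise a.e. in $\R^N$. The weak formulation extends from $C^\infty_0(\Omega)$ to $C^\infty_0(U)$ via Proposition \ref{P} together with the mixed Neumann boundary conditions satisfied by $w$, yielding
\[
\int_\Omega (\lambda w^{-q} + w^{2^*-1}) \phi_n \,dx = \int_\Omega \nabla w \cdot \nabla \phi_n \,dx + \int_Q \frac{(w(x)-w(y))(\phi_n(x)-\phi_n(y))}{|x-y|^{N+2s}} \,dx\,dy \leq \eta(w)\eta(\phi_n).
\]
Since the integrand on the left is non-negative and $\eta(\phi_n)\to\eta(\phi)$, Fatou's lemma produces
\[
\int_\Omega (\lambda w^{-q} + w^{2^*-1}) \phi\,dx \leq \eta(w)\eta(\phi).
\]
Applying this with $\phi = |u|$ (and using $\eta(|u|)\leq \eta(u)$) gives $w^{-q}u \in L^1(\Omega)$ together with $\lambda\int_\Omega w^{-q}|u|\,dx \leq \eta(w)\eta(u)$.

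\textbf{Step 2 (passage to the limit).} Given $u \in \mathcal{X}^{1,2}_{\mathcal{D}}(U)$, pick $\varphi_n \in C^\infty_0(U)$ with $\varphi_n \to u$ strongly. The weak identity holds for each $\varphi_n$. The bilinear terms pass to the limit by Cauchy--Schwarz and the strong convergence; the singular term satisfies
\[
\left|\int_\Omega w^{-q}(\varphi_n - u)\,dx\right| \leq \frac{1}{\lambda}\eta(w)\eta(\varphi_n - u) \to 0
\]
by the estimate from Step 1 applied to $|\varphi_n - u|$; the critical term is estimated by
\[
\left|\int_\Omega w^{2^*-1}(\varphi_n - u)\,dx\right| \leq \|w\|_{L^\infty(U)}^{2^*-1} |\Omega|^{1/2} \|\varphi_n - u\|_{L^2(\Omega)} \to 0,
\]
using Theorem \ref{3} and the embedding in Remark \ref{r2.5}. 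Combining these yields the desired identity.

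\textbf{Main obstacle.} The delicate point is justifying the use of test functions from $C^\infty_0(U)$ (rather than merely $C^\infty_0(\Omega)$) in the weak identity for $w$: this step activates the mixed Neumann boundary conditions via Proposition \ref{P}, and without it one cannot even begin the Fatou argument, since $C^\infty_0(\Omega)$ is not dense in $\mathcal{X}^{1,2}_{\mathcal{D}}(U)$. Once this extension is in place, the remainder is a routine density/Fatou passage to the limit, entirely parallel to the purely singular case treated in Proposition \ref{prop3.1}.
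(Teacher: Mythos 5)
Your argument is correct in spirit but follows a different route than the paper's. The paper invokes Lemma~\ref{lem2.6}: given $u\geq 0$, it builds a \emph{monotone increasing} sequence $(u_m)$ of compactly supported elements of $\mathcal{X}^{1,2}_{\mathcal{D}}(U)$ converging strongly to $u$, tests the weak identity against each $u_m$, and then lets the monotone convergence theorem act simultaneously on the non-negative integrand $(\lambda w^{-q}+w^{2^*-1})u_m$; general $u$ is handled by splitting $u=u^+-u^-$. You instead adapt the density/Fatou mechanism of Proposition~\ref{prop3.1}: Fatou applied to the tested identity yields the a priori bound $\lambda\int_\Omega w^{-q}|u|\,dx\leq\eta(w)\eta(u)$, which then controls the singular term along a (not necessarily monotone) smooth approximating sequence $\varphi_n\to u$. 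Both routes reach the same conclusion; your version produces the explicit quantitative bound of Step~1 as a by-product, while the paper's monotone route passes both the singular and the critical term to the limit at once and requires no integrability estimate for $w$ at all. Both arguments also share the same unstated precondition (present already in the paper's own proof): the weak formulation in Definition~\ref{d1} must be applicable to test functions supported in $U$ rather than merely in $\Omega$, since neither $C^\infty_0(\Omega)$ nor $\Omega$-compactly supported functions are dense in $\mathcal{X}^{1,2}_{\mathcal{D}}(U)$; your ``Main obstacle'' paragraph correctly flags this.

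One minor but genuine defect in your Step~2: to control $\int_\Omega w^{2^*-1}(\varphi_n-u)\,dx$ you invoke Theorem~\ref{3}, i.e.\ $w\in L^\infty(U)$. But Theorem~\ref{3} is proved \emph{after} Lemma~\ref{lem6.1} in the paper, so this either creates a circular dependency or forces a reordering. The $L^\infty$ bound is also unnecessary: by Remark~\ref{r2.5} the embedding $\mathcal{X}^{1,2}_{\mathcal{D}}(U)\hookrightarrow L^{2^*}(U)$ is continuous, so $w^{2^*-1}\in L^{(2^*)'}(\Omega)$ and $\|\varphi_n-u\|_{L^{2^*}(U)}\to 0$; H\"older's inequality then gives $\int_\Omega w^{2^*-1}|\varphi_n-u|\,dx\to 0$ with no boundedness of $w$ required. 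Replacing the $L^\infty$ estimate by this Sobolev--H\"older estimate repairs the ordering and makes the proof self-contained. (The paper's monotone-convergence argument sidesteps the issue entirely, since $\lambda w^{-q}u_m+w^{2^*-1}u_m$ is treated as a single monotone family.)
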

\begin{proof}
Let $w$ be a  weak solution of \eqref{1} and let
$u \in \mathcal{X}^{1,2}_{\mathcal{D}}(U)$.
Consider first the case
when $u>0$ in  $\Omega$. Then
by Lemma $\ref{lem2.6}$ there exists a sequence ${(u_m)_m \subset} \mathcal{X}^{1,2}_{\mathcal{D}}(U)$ such that
$u_m \to u$ strongly in $\mathcal{X}^{1,2}_{\mathcal{D}}(U)$, where each $u_m$ has compact support in
$U$ and $0 \leq u_1 \leq u_2 \leq \ldots$. Since each $u_m$ has compact support in $U$ and $w$ is a positive weak
solution of \eqref{1}, we obtain
$$\int_{\Omega}\nabla u_m \cdot \nabla w \, dx + \int_{{Q}}  \frac{(u_m(x)-u_m(y)) (w(x) - w(y))} {|x-y|^{N+2s}} \, dx \, dy=\int_\Omega \big(\lambda  w^{-q}(x)u_m(x) + w^{2^*-1}u_m(x)\big) \,dx,$$
for each~ $m\in \mathbb{N}.$
 Using the monotone convergence theorem, we obtain
 $$w^{-q}(x)u(x) \in L^{1}({\Omega})~\text{and}$$
$$\int_{\Omega}\nabla u \cdot \nabla w \, dx + \int_{{Q}}  \frac{(u(x)-u(y)) (w(x) - w(y))} {|x-y|^{N+2s}} \, dx \, dy=\int_\Omega \lambda  w^{-q}(x)u(x) \,dx +\int_\Omega w^{2^*-1}(x)u(x)dx.$$

In the general case, $u = u^+ - u^-$ and $u^+, u^- \in \mathcal{X}^{1,2}_{\mathcal{D}}(U)$.
The proof of the first part shows that the assertion of the lemma holds
for $u^+$ and $u^-$ and so for $u$. This concludes the proof.
\end{proof} 

\textbf{Proof of Theorem \ref{3}:}
  The proof of $L^\infty$-regularity for solutions of $(P_\lambda)$ when $g(u)=u^{-q}$ follows from Theorem \ref{proof-mt3}.
     So let  $u$ be a positive weak solution to \eqref{1} when $g(u)=\lambda u^{-q}+u^{p},$ with $0<q<1<p\leq 2^*-1$ and $\lambda>0$, now we aim to show that $u\in L^{\infty}(U).$   To prove this result, we apply the classical Moser iteration method and limiting arguments of $L^p$ norm.

Let us define 
$$ \varphi(\sigma)=\varphi_{T, \beta}(\sigma)\begin{cases}  
    0 & ~~~\text{if}~~ \sigma\leq 0\\ 
  \sigma^{\beta}&~~~~\text{if} ~~0<\sigma<T,\\
 \beta T^{{\beta}-1} (\sigma-T)+T^{\beta}&~~ ~~\text{if} ~~\sigma\geq T,
      \end{cases}
$$
where $\beta\geq 1$ and $T>0$ is large. By the definition of $\varphi(\sigma)$, it is easy to see that $\varphi$ is a Lipschitz function with Lipschitz constant $L= \beta T^{{\beta}-1}$. Moreover,
we observe that $\varphi(0)=0$, $\varphi\in C^1(\R,\R)$, is a positive convex function and its first derivative is Lipschitz continuous. So,  utilizing the proof of Lemma 3.1 in \cite{JDE}, it is easy to see that if $u\in \mathcal{X}^{1,2}_{\mathcal{D}}(U)$ then $\varphi(u), \varphi(u)\varphi^{\prime}(u)\in \mathcal{X}^{1,2}_{\mathcal{D}}(U).$
Thus, we can see
\begin{equation}\label{501}
\eta(\varphi(u))^2\leq L^2 \eta(u)^2,
    \end{equation} 
    and since $\varphi(u)\in \mathcal{X}^{1,2}_{\mathcal{D}}(U)$ then using Sobolev embedding (see Remark \ref{r2.5}), we have
$$
    \|\varphi(u)\|^2_{L^{p+1}(\Omega)}\leq S \eta(\varphi(u))^2, ~~1\leq p\leq 2^*-1
$$
where $S>0$ is an embedding constant. From \eqref{501}, we have 
\begin{equation}\label{502}
    \|\varphi(u)\|^2_{L^{p+1}(\Omega)}\leq C \eta(u)^2, 
\end{equation}
for some constant $C>0$, where $C=SL^2$. In addition, using the convexity of $\varphi$, for any $x,y\in\R^N$, we obtain
$$\varphi\left(u(x)\right)-\varphi\left(u(y)\right)\leq \varphi^{\prime}\left(u(x)\right)\left( u(x)-u(y)\right),$$
then one has
\begin{equation}\label{503}
\int_{\Omega} \left(\mathcal{L}\varphi(u)\right) \varphi(u)\,dx \leq \int_{\Omega} \left(\mathcal{L}u\right) \varphi^{\prime}(u) \varphi(u)\,dx = \int_{\Omega} \left(\lambda u^{-q} + u^p\right) \varphi^{\prime}(u) \varphi(u)\,dx.
\end{equation}
Therefore, using \eqref{501}, \eqref{502}, and \eqref{503}, we deduce       
\begin{equation}\label{504}
   \|\varphi(u)\|^2_{L^{p+1}(\Omega)}\leq C \int_{\Omega} \left(\lambda u^{-q} + u^p\right) \varphi^{\prime}(u) \varphi(u)\,dx. 
\end{equation}
Using the estimates $\varphi(u)\leq u^{\beta}$, $\varphi^{\prime}(u)\leq\beta (1+ \varphi(u))$, and $u\varphi^{\prime}(u)\leq\beta\varphi(u)$,  we get
\begin{equation}\label{505}
\begin{aligned}
 \int_{\Omega} \left( \lambda u^{-q} + u^p\right) \varphi^{\prime}(u) \varphi(u)\,dx &= \int_{\Omega} \left(\lambda \varphi^{\prime}(u) \varphi(u) u^{-q} + \varphi^{\prime}(u) \varphi(u) u^p\right) \,dx \\ 
 &\leq \lambda \beta \int_{\Omega} \varphi(u) u^{-q} (1+\varphi(u))\,dx + \beta\int_{\Omega} \left(\varphi(u)\right)^2 u^{p-1}\,dx.
\end{aligned}
    \end{equation}
Hence, from \eqref{504} and \eqref{505}, we obtain
\begin{equation}\label{506}
 \left(\int_{\Omega} (\varphi(u))^{p+1} \,dx\right)^{\frac{2}{p+1}}\leq C_1 \beta \left( \int_{\Omega} \left(\varphi(u) u^{-q} + \left(\varphi(u)\right)^2 u^{-q}\right) \,dx+ \int_{\Omega} \left(\varphi(u)\right)^2 u^{p-1}\,dx\right)  
   \end{equation}
where $C_1= C \max\{\lambda,1\}>0.$

\textbf{Claim:} Let $\beta_1>0$ be such that $2\beta_1= p+1$, then $u\in L^{(p+1)\beta_1}(\Omega).$ 

Fixing some $R>0$ (large), whose appropriate value is to be determined later and using H\"older inequality with $p=\beta_1= \frac{p+1}{2}$ and $p'=\frac{p+1}{p-1}$ (conjugate of $p)$, we obtain
\begin{equation}\label{507}
\begin{aligned}
\int_{\Omega}(\varphi(u))^2 u^{p-1}\,dx & =\int_{\{u \leq R\}}(\varphi(u))^2 u^{p-1} \,dx+\int_{\{u>R\}}(\varphi(u))^2 u^{p-1} \,dx \\
& \leq R^{p-1} \int_{\{u \leq R\}}(\varphi(u))^2 \,dx +\left(\int_{\Omega}(\varphi(u))^{p+1} \,dx\right)^{2 / p+1}\left(\int_{\{u>R\}} u^{p+1} \,dx\right)^{\left(p-1\right) / p+1}.
\end{aligned}
    \end{equation}
By the Monotone Convergence theorem, one can choose $R>0$ large enough such that
\begin{equation}\label{508}
\left(\int_{\{u>R\}} u^{p+1} \,dx\right)^{\left(p-1\right) / p+1}  \leq \frac{1}{2 C_1 \beta_1}
    \end{equation}
and using \eqref{508}, \eqref{507} in \eqref{506}, we deduce
\begin{equation}\label{509}
\begin{aligned}
 \left(\int_{\Omega}(\varphi(u))^{p+1} \,dx\right)^{2 / p+1}  \leq 2 C_1 \beta_1\left(\int_{\Omega}(\varphi(u)) u^{-q} \,dx +\int_{\Omega}(\varphi(u))^2 u^{-q} \,dx +R^{p-1} \int_{\{u \leq R\}}(\varphi(u))^2 \,dx\right).
\end{aligned}
    \end{equation}

Now, using $ \varphi_{T, \beta_1}(u)=\varphi(u) \leq u^{\beta_1}$ on the right hand side of \eqref{509} and then letting $T \rightarrow \infty$ on the left hand side, since $2\beta_1= p+1$, we deduce that
\begin{equation}\label{be}
\left(\int_{\Omega} u^{(p+1) \beta_1}\right)^{2 / p+1} \,dx \leq 2 C_1 \beta_1\left(\int_{\Omega} u^{\frac{p+1}{2}-q} \,dx +\int_{\Omega} u^{p+1-q} \,dx +R^{p-1} \int_{\Omega} u^{p+1} \,dx\right)<+\infty.
    \end{equation}
 This proves the claim. Now, we shall find an increasing unbounded sequence $\beta_n$ such that $u\in L^{(p+1)\beta_n}(\Omega)$, for each $n\geq 1$. So, from \eqref{506}, using $\varphi(u) \leq u^\beta$ on the right  hand side and then letting $T \rightarrow \infty$ on the right hand side, we get
\begin{equation}\label{5010}
    \begin{aligned}
\left(\int_{\Omega} u^{(p+1) \beta}\right)^{2 /(p+1)} & \leq C_1 \beta\left(\int_{\Omega} u^{\beta-q}+\int_{\Omega} u^{2 \beta-q}+R^{p-1} \int_{\Omega} u^{2 \beta+p-1}\right) \\
& \leq C_1^{\prime} \beta\left(1+\int_{\Omega} u^{2 \beta-q}+R^{p-1} \int_{\Omega} u^{2 \beta+p-1}\right)
\end{aligned}
\end{equation}
for some constant $C_1^{\prime}>0$. Now we can see 
$$
\int_{\Omega} u^{2 \beta-q}=\int_{u \geq 1} u^{2 \beta-q}+\int_{u<1} u^{2 \beta-q} \leq \int_{u \geq 1} u^{2 \beta+p-1}+|\Omega| .
$$

Using this in \eqref{5010}, with some simplifications, we obtain
$$
\left(1+\int_{\Omega} u^{(p+1) \beta}\right)^{\frac{(\beta-1)}{2 }} \leq C_\beta^{\frac{1}{2(\beta-1)}}\left(1+\int_{\Omega} u^{2 \beta+p-1}\right)^{\frac{1}{2(\beta-1)}}
$$
where $C_\beta=4 C_1^{\prime} \beta(1+|\Omega|)$. For $n \geq 1$, let us define $\beta_{n+1}$ inductively by
$$
2 \beta_{n+1}+p-1=(p+1) \beta_n
$$
that is
$$\beta_{n+1}-1=\frac{p+1}{2}(\beta_n-1)=\left(\frac{p+1}{2}\right)^{m}(\beta_1 -1).$$

Now, we follow the same ideas of the proof of Theorem 1.1 in  \cite{Su}, we infer that
$$
\left(1+\int_{\Omega}u^{(p+1) \beta_{n+1}} d x\right)^{\frac{1}{(p+1)\left(\beta_{n+1}-1\right)}} \leqslant\left(C_{ \beta_{n+1}}\right)^{\frac{1}{2\left(\beta_{n+1}-1\right)}}\left(1+\int_{\Omega}u^{(p+1) \beta_n} d x\right)^{\frac{1}{(p+1)\left(\beta_n-1\right)}}
$$
where $C_{\beta_{{n+1}}}= 4C_1'\beta_{n+1}(1+|\Omega|).$
We now define $C_{n+1}=C_{\beta_{{n+1}}}$ and
$$
B_{n}=\left(1+\int_{\Omega}u^{(p+1) \beta_n} d x\right)^{\frac{1}{(p+1)(\beta_{n}-1)}}.
$$

In particular, note that $B_1=\left(1+\int_{\Omega}u^{(p+1) \beta_1} d x\right)^{\frac{1}{(p+1)\left(\beta_1-1\right)}}$ is bounded by \eqref{be}.
Now, we claim that there exists a constant $C_0>0$ independent of $n$, such that
\begin{equation}\label{5012}
B_{n+1} \leqslant \prod_{k=2}^{n+1} C_k^{\frac{1}{2\left(\beta_k-1\right)}} B_1 \leqslant C_0 B_1
    \end{equation}
We stress that once \eqref{5012} is established then by the Hölder inequality, we conclude that $u \in$ $L^r(\Omega)$, for every $r \in[1,+\infty)$. Further, by limiting the argument, we conclude that
$
\|u\|_{L^{\infty}(\Omega)} \leqslant C_0 B_1<+\infty.
$
 Now,  if $x\in \mathcal{N}$, then using the definition of $\mathcal{N}_s$ (see  \eqref{normal}) and $\mathcal{N}_s u(x)=0$, we have
$$ u(x)\int_{\Omega} \frac{dy}{|x-y|^{n+2s}}= \int_{\Omega} \frac{u(y) dy}{|x-y|^{n+2s}} \implies
u(x)=\frac{\int_{\Omega} \frac{u(y) dy}{|x-y|^{n+2s}}}{\int_{\Omega} \frac{dy}{|x-y|^{n+2s}}}.$$
Since $u\in L^{\infty}(\Omega)$ then we get $|u(x)|\leq \|u\|_{L^{\infty}(\Omega)}$, for each $x\in\mathcal{N}.$ Thus, we conclude that $u\in L^{\infty}(U).$
Thus, the proof of this theorem is now complete.

\section*{Acknowledgements} 
Tuhina Mukherjee acknowledges the financial support provided by CSIR-HRDG with sanction No. 25/0324/23/EMR-II. Lovelesh Sharma received assistance from the UGC Grant with reference no. 191620169606 funded by the Government
of India.


\Addresses
\end{document}